\DeclareFontFamily{U}{shuffle}{}
\DeclareFontShape{U}{shuffle}{m}{n}{ <-8>shuffle7 <8->shuffle10}{}
\DeclareMathOperator\sh{{{sh}}}
\DeclareMathOperator\ch{{{ch}}}
\DeclareMathOperator\rre{{{Re}}}
\let\th\relax
\DeclareMathOperator\th{{{th}}}
\DeclareMathOperator\cth{{{cth}}}
\DeclareMathOperator\sech{{{sech}}}
\DeclareMathOperator\csch{{{csch}}}
\newcommand\nc{\newcommand}
\newcommand{\smbb}[1]{\left(#1\right)}
\newcommand{\uud}{\,\mathrm{d}}
\newcommand{\uue}{\,\mathrm{e}}
\nc{\ES}{\mathsf {ES}}
\nc{\MZV}{\mathsf {MZV}}
\nc{\MtV}{\mathsf {MtV}}
\nc{\MTV}{\mathsf {MTV}}
\nc{\MSV}{\mathsf {MSV}}
\nc{\MMV}{\mathsf {MMV}}
\nc{\MMVo}{\mathsf {MMVo}}
\nc{\MMVe}{\mathsf {MMVe}}
\nc{\AMMV}{\mathsf {AMMV}}
\nc{\sha}{\shuffle}
 \nc{\tlg}{{\tilde{g}}}
 \nc{\tlp}{{\tilde{p}}}
\nc{\calC}{{\mathcal C}}
\nc{\calR}{{\mathcal R}}
\nc{\calS}{{\mathcal S}}
\nc{\gd}{\delta}
\nc{\gs}{\sigma}
\nc{\ola}{\overleftarrow}
\nc{\ora}{\overrightarrow}
\nc{\lra}{\longrightarrow}
\nc{\Lra}{\Longrightarrow}
\nc\Res{{\rm Res}}
\nc\setX{{\mathsf{X}}}
\nc\fA{{\mathfrak{A}}}
\nc\evaM{{\texttt{M}}}
\nc\evaML{{\text{\em{\texttt{M}}}}}
\nc\z{{\texttt{z}}}
\nc\txp{{\tx_1}} 
\nc\txn{{\tx_{-1}}} 
\nc\neo{{1}}
\nc{\yi}{{1}}
\nc\one{{-1}}
\nc\gf{{\phi}}
\nc\ga{{\alpha}}
\nc\gb{{\beta}}
\nc\gt{{\tau}}
\nc\gD{{\Delta}}
\nc\gG{{\Gamma}}
\nc\om{{\omega}}
\nc\omn{\omega_{-1}}
\nc\omz{\omega_0}
\nc\omp{\omega_{1}}
\nc\eps{{\varepsilon}}
\nc\ta{{\texttt{a}}}
\nc\ty{{\texttt{y}}}
\nc\tx{{\texttt{x}}}
\nc\td{{\texttt{d}}}
\nc\tz{{\texttt{z}}}
\nc{\bfp}{{\bf p}}
\nc{\bfq}{{\bf q}}
\nc{\bfr}{{\bf r}}
\nc{\bfu}{{\bf u}}
\nc{\bfv}{{\bf v}}
\nc{\bfw}{{\bf w}}
\nc{\bfy}{{\bf y}}
\nc{\bfe}{{\textbf{\em{e}}}}
\nc{\bfi}{{\textbf{\em{i}}}}
\nc{\bfj}{{\textbf{\em{j}}}}
\nc{\bfk}{{\textbf{\em{k}}}}
\nc{\bfkk}{{\text{\textbf{k}}}}
\nc{\bfl}{{\textbf{\em{l}}}}
\nc{\bfm}{{\textbf{\em{m}}}}
\nc{\bfn}{{\textbf{\em{n}}}}
\nc{\bfnn}{{\text{\textbf{n}}}}
\nc{\bfs}{{\textbf{\em{s}}}}
\nc{\bft}{{\textbf{\em{t}}}}
\nc{\bfx}{{\textbf{\em{x}}}}
\nc{\bfz}{{\textbf{\em{z}}}}
\nc\bfgl{{\boldsymbol \lambda}}
\nc\bfsi{{\boldsymbol \sigma}}
\nc\bfet{{\boldsymbol \eta}}
\nc\bfeta{{\boldsymbol \eta}}
\nc\bfeps{{\boldsymbol \varepsilon}}
\nc\bfone{{\bf 1}}
\nc{\baru}{{\bar u}}
\nc{\bmu}{{\bar\mu}}
\nc{\myone}{{1}}
\nc{\reg}{{\rm R}}
\DeclareMathOperator*{\dep}{dep}
\DeclareMathOperator{\Li}{Li}
\DeclareMathOperator{\cl}{Cl}
\nc{\binn}{{\binom{2n}{n}}}
\nc{\rint}[4]{\int_{#1}^{#2}\hskip#3{\rm R}\hskip#4}
\nc\UU{\mbox{\bfseries U}}
\nc\FF{\mbox{\bfseries \itshape F}}
\nc\h{\mbox{\bfseries \itshape h}}\nc\dd{\mbox{d}}
\nc\g{\mbox{\bfseries \itshape g}}
\nc\xx{\mbox{\bfseries \itshape x}}
\def\R{\mathbb{R}}
\def\vU{\mathsf{U}}
\def\CMZV{\mathsf{CMZV}}
\def\N{\mathbb{N}}
\def\Z{\mathbb{Z}}
\def\Q{\mathbb{Q}}
\def\CC{\mathbb{C}}
\def\ze{\zeta}
\def\xx{\left(\frac{1-x}{1+x} \right)}
\nc\divg{{\text{div}}}
\theoremstyle{plain}
\newtheorem{thm}{Theorem}[section]
\newtheorem{lem}[thm]{Lemma}
\newtheorem{cor}[thm]{Corollary}
\newtheorem{conj}[thm]{Conjecture}
\newtheorem{prop}[thm]{Proposition}
\theoremstyle{definition}
\newtheorem{defn}{Definition}[section]
\newtheorem{re}[thm]{Remark}
\newtheorem{ex}[thm]{Example}
\begin{document}
 \title{\bf Ap\'ery-Like Sums and Colored \\ Multiple Zeta Values}
\markright{Ap\'ery-Like Sums and Colored MZVs}
\author{
{Ce Xu${}^{a,}$\thanks{Email: cexu2020@ahnu.edu.cn,  ORCID 0000-0002-0059-7420.}\ \ and Jianqiang Zhao${}^{b,}$\thanks{Email: zhaoj@ihes.fr, corresponding author, ORCID 0000-0003-1407-4230.}}\\[1mm]
\small a. School of Mathematics and Statistics, Anhui Normal University, Wuhu 241002, PRC\\
\small b. Department of Mathematics, The Bishop's School, La Jolla, CA 92037, USA\\[5mm]
\large \emph{Dedicated to Professor Zhi-Wei Sun on the occasion of his 60th birthday}
}

\date{}
\maketitle

\begin{abstract}
In this article we shall survey some recent progress on the study of Ap\'ery-like sums which are multiple variable generalizations of the two sums Ap\'ery used in his famous proof of the irrationality of $\zeta(2)$ and $\zeta(3)$. We only allow the central binomial coefficients to appear in these infinite sums but they can appear either on the numerator or on the denominator. Special values of both types are closely related to the colored multiple zeta values and have played important roles in the calculations of the $\eps$-expansion of multiloop Feynman diagrams. We will summarize several different approaches to computing these sums and prove a few conjectural identities of Z.-W. Sun as corollaries along the way.
\end{abstract}

\medskip
\noindent{\bf Keywords}: Ap\'ery-like sum, central binomial coefficient, colored multiple zeta value, iterated integral, unital function, regularization.

\medskip
\noindent{\bf AMS Subject Classifications (2020):} 11M32, 11B65, 33B30.


\section{Multiple zeta values}
\subsection{Historical background}
The study of the Riemann zeta function $\zeta(z)$ and its special values has been one of the major driving forces in contemporary and modern number theory. More than a century before Riemann defined his namesake function for complex variable $z$, at the young age of 28, Euler already achieved his stardom status by finding the exact value of $\zeta(2)$, the renowned Basel problem at that time. In fact he gave three different proofs of $\zeta(2)=\pi^2/6$ in \cite{Euler1735Basel} and presented his result in December of 1735 at St. Petersburg Academy of Sciences. Seven years later, Goldbach wrote to Euler from Moscow and referred him to some curious double sums which prompted Euler to begin his study of the double zeta values. Euler later returned a few more times to the same subject during his legendarily long math career.

In modern notation, the multiple zeta function of complex variables $z_1,\dotsc,z_d$ is defined by
\begin{equation}\label{equ:MZVdefn}
\zeta(z_1,\dotsc,z_d):=\sum\limits_{n_1>\cdots>n_d>0 } \frac{1}{n_1^{z_1}\cdots n_d^{z_d}}.
\end{equation}
It converges if $\rre(z_1+\dotsm+z_j)>j$ for all $j=1,\dotsc,d$ and similarly to $\zeta(z)$,
it can be analytically continued to a memomorphic function to $\CC^d$ with points of indeterminacy explicitly described
(see \cite{AkiyamaEgTa2001,Zhao1999}). Its special value $\zeta(\bfk)$ at positive integer arguments $\bfk=(k_1,\dotsc,k_d)$
is called a \emph{multiple zeta value} (MZV). It converges if and only if $k_1\ge 2$ in which case $\bfk$ is called \emph{admissible}.
Conventionally, $d$ is called the \emph{depth} and $|\bfk|=k_1+\dotsm+k_d$ the \emph{weight} of the MZV, respectively,.
A celebrated result of Euler states that if the weight $w=a+b$ is odd then the double zeta value $\zeta(a,b)$ can be expressed
as a $\Q$-linear combination of $\zeta(w)$ and products of Riemann zeta values $\zeta(k)\zeta(w-k), 2\le k\le w-2$
(see \cite[Prop.~7]{Zagier2012} for a modern constructive proof). The special case when $w=3$, often dubbed as Euler's identity,
is particularly enticing:
\begin{equation}\label{equ:EulerId}
\zeta(3)=\zeta(2,1).
\end{equation}

After Euler, the study of MZVs went into dormancy for more than 200 years. Two influential papers appeared
during the last decade of 20th century. Around 1990, D. Zagier at Max Planck Institute for Mathematics in Germany
started to consider the relations between MZVs, the mixed Tate motives over $\Z$,
and the Vassiliev knot invariants and published his ideas in the seminal paper \cite{Zagier1994}
(see \cite{Brown2012} and \cite{Furusho2015} for more recent results). In particular, he pointed out the crucial representation of MZVs by Chen's iterated integrals first noticed by M. Kontsevitch.

Across the Atlantic, around almost the same time period, M. Hoffman at the US Naval Academy
began to study the rational relations among the MZVs in \cite{Hoffman1992}
and then discovered the algebraic structures hidden underneath them
in his J.\ of Algebra paper \cite{Hoffman1997} in 1997. The foundation laid down in this paper
has since led to a series of important progresses in the study of not only MZVs themselves but also
their various generalizations in recent years.

\subsection{Chen's theory of iterated integrals}
For $r>1$, we define inductively
\begin{equation*}
\int_a^b f_1(t)\, dt \cdots f_r(t)\, dt=
\int_a^b f_1(\gt)\, d\gt \cdots f_{r-1}(\gt) \left(\int_a^\gt f_r(t)\, dt\right) \, d\gt.
\end{equation*}
We set the integral to be 1 if $r=0$.
More generally, let $\gf_1,\dots , \gf_r$ be some $1$-forms (repetition allowed)
on a manifold $M$ and let $\ga: [0,1]\to M$ be a piecewise smooth path.
Write $\ga^* \gf_i = f_i(t) \, dt$ and define the iterated integral
\begin{equation*}
\int_\ga \gf_1 \cdots \gf_r:=
\int_0^1 f_1(t)\, dt \cdots f_r(t)\, dt.
\end{equation*}

The following results are crucial in the application of Chen's theory of the iterated path integrals. For a sketch of the proof we refer the interested reader to \cite[Lemma~2.1.2]{Zhao2016}.
\begin{lem}\label{lem:chen's}
Let $\gf_i$ $(i\ge 1)$ be $\CC$-valued $1$-forms on a manifold $M$. Then

\begin{enumerate}
\item[\upshape{(i)}] If $\ga,\gb:[0,1]\longrightarrow M$ are composable paths
(\emph{i.e.}, $\ga(1)=\gb(0)$), then
\begin{equation*}
\int_{\gb\ga} \gf_1\cdots \gf_r=\sum_{j=0}^r
\int_\gb \gf_1\cdots \gf_j\int_\ga \gf_{j+1}\cdots \gf_r,
\end{equation*}
where we set $\int_\ga \phi_1\cdots \phi_m=1$ if $m=0$ and
$\gb\ga$ denotes the composition of $\ga$ and $\gb$.

\item[\upshape{(ii)}] For every path $\ga:[0,1]\lra M$,
let $\ga^{-1}(t)=\ga(1-t)$. Then
$$ \int_{\ga^{-1}}\gf_1\cdots \gf_r = (-1)^r\int_\ga \gf_r\cdots \gf_1.$$

\item[\upshape{(iii)}] For every path $\ga$,
\end{enumerate}
\begin{equation*}
 \int_\ga \gf_1\cdots \gf_r\int_\ga \gf_{r+1}\cdots \gf_{r+s}= \int_\ga \gf_1\cdots \gf_r\sha \gf_{r+1}\cdots \gf_{r+s},
\end{equation*}
where $\sha$ is the usual shuffle product of words\footnote{The shuffle product behaves exactly as the shuffling of two decks of cards.}.
\end{lem}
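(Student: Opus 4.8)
The plan is to reduce all three assertions to the explicit simplex representation of the iterated integral. Writing $\ga^*\gf_i = f_i(t)\,dt$ and unwinding the inductive definition gives
\[
\int_\ga \gf_1\cdots\gf_r=\int_{\gD_r} f_1(t_1)\cdots f_r(t_r)\,dt_1\cdots dt_r,\qquad \gD_r=\{1\ge t_1\ge\cdots\ge t_r\ge 0\},
\]
so that each claim becomes a statement about integration over the standard $r$-simplex. I would establish this formula once, by a one-line induction on $r$ from the recursion in the definition, and then use it throughout. Part (ii) is the quickest and I would dispatch it first: since $\ga^{-1}(t)=\ga(1-t)$ yields $(\ga^{-1})^*\gf_i=-f_i(1-t)\,dt$, each of the $r$ one-forms contributes a factor $-1$, producing the global sign $(-1)^r$, and the substitution $s_i=1-t_{r+1-i}$ carries $\gD_r$ to itself (unit Jacobian) while reversing the order of the forms, which is precisely $\gf_r\cdots\gf_1$.

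For part (i) the cleanest route is a cut-point decomposition. I would parametrize $\gb\ga$ so that $\ga$ occupies $[0,\tfrac12]$ and $\gb$ occupies $[\tfrac12,1]$. Because the simplex orders the times as $t_1\ge\cdots\ge t_r$, the event that $t_1,\dots,t_j$ lie in the later ($\gb$) segment while $t_{j+1},\dots,t_r$ lie in the earlier ($\ga$) segment partitions $\gD_r$, up to a measure-zero boundary, into $r+1$ pieces indexed by $j=0,\dots,r$. On each piece the constraints decouple, and after rescaling the two segments the integral factors as $\int_\gb \gf_1\cdots\gf_j\cdot\int_\ga \gf_{j+1}\cdots\gf_r$; summing over $j$ yields the claim. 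I would keep in reserve the equivalent induction on $r$: the base case $r=1$ is additivity of the integral over a concatenation, and the inductive step splits the outermost integral in the recursive definition at $t=\tfrac12$, applying the induction hypothesis to the inner $(r-1)$-fold integral over $\gb|_{[0,\,\cdot\,]}\ga$.

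For part (iii) the product of the two iterated integrals is a single integral over the product simplex $\gD_r\times\gD_s$ in the separated variables $t_1\ge\cdots\ge t_r$ and $u_1\ge\cdots\ge u_s$. Away from the measure-zero locus where some $t_i$ equals some $u_j$, the two chains merge into one totally ordered chain of $r+s$ distinct times, and each admissible total order is exactly an interleaving of the two original chains, i.e. a term of the shuffle $\gf_1\cdots\gf_r\sha\gf_{r+1}\cdots\gf_{r+s}$. Summing the integrals over these sub-simplices reconstitutes $\int_\ga(\gf_1\cdots\gf_r\sha\gf_{r+1}\cdots\gf_{r+s})$. Again this can alternatively be carried out by induction on $r+s$, matching the geometric splitting against the recursive rule $a\bfu\sha b\bfv=a(\bfu\sha b\bfv)+b(a\bfu\sha\bfv)$ for words beginning with one-forms $a,b$.

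The genuine work — and the main obstacle — is not any single computation but the careful bookkeeping of boundaries and orientations: verifying that the cut-point pieces in (i) and the interleaving sub-simplices in (iii) cover their ambient simplices exactly once up to sets of measure zero, that the factorization over each piece matches the correct splitting of the word, and that the substitution in (ii) reverses the index order without introducing any sign beyond $(-1)^r$. Once the simplex picture is fixed, each identity collapses to this combinatorial-geometric accounting, so I would invest most of the effort in making the decompositions and the index matching precise.
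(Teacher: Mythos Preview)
Your proposal is correct and follows the standard route: pass to the simplex representation of the iterated integral and then argue geometrically (cut-point decomposition for (i), the order-reversing substitution for (ii), and the shuffle decomposition of a product of simplices for (iii)). The paper does not actually prove this lemma; it simply refers the reader to \cite[Lemma~2.1.2]{Zhao2016} for a sketch, and the sketch there is precisely the simplex argument you outline, so your approach matches.
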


For any admissible composition $\bfk=(k_1,\dotsc,k_d)\in\N^d$ (i.e., $k_1\ge 2$) we have
\begin{equation}\label{equ:iteratedForm}
\zeta(\bfk)=\int_0^1 \left(\frac{dt}{t}\right)^{k_1-1} \frac{dt}{1-t}\cdots
\left(\frac{dt}{t}\right)^{k_d-1} \frac{dt}{1-t}.
\end{equation}
This can be proved easily if we replace the interval of integration by $[0,x]$ on the right-hand side,
and replace $\zeta(\bfk)$ by the \emph{single variable multiple polylogarithm}
\begin{equation}\label{equ:singleVMPL}
\Li_{\bfk}(x):=\sum\limits_{n_1>\cdots>n_d>0 } \frac{x^{n_1}}{n_1^{k_1}\cdots n_d^{k_d}}
\end{equation}
on the left-hand side, and proceed by induction on the weight using differentiation by the Fundamental Theorem of Calculus. It follows immediately by the change of variables $t_j\leftrightarrow 1-t_j$
that the following duality relation holds. For $l\in\N$, we denote by $1_l$ the sequence $(1,\dotsc,1)$ with 1
repeating $l$ times.

\begin{thm} \emph{(Duality of MZVs)} For all $k_1,\dotsc,k_d, l_1,\dotsc,l_d\in\N$ we have
\begin{equation*}
\zeta(k_1+1,1_{l_1-1},\dotsc,k_d+1,1_{l_d-1})=\zeta(l_d+1,1_{k_d-1},\dotsc,l_1+1,1_{k_1-1}).
\end{equation*}
\end{thm}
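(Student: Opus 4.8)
The plan is to prove the duality relation directly from the iterated integral representation \eqref{equ:iteratedForm}, using the change of variables $t \mapsto 1-t$ on the unit interval, as hinted immediately before the statement. The core idea is that the substitution interchanges the two basic $1$-forms appearing in the integrand and reverses the order of integration, which I expect will convert the integral for the left-hand side into precisely the integral for the right-hand side.

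First I would write out the iterated integral for a general admissible MZV in a uniform way. Given a composition, formula \eqref{equ:iteratedForm} expresses $\zeta(\bfk)$ as an integral over the standard simplex $0 < t_r < \cdots < t_1 < 1$ of a product of forms, each factor being either $\frac{dt}{t}$ or $\frac{dt}{1-t}$; the total number of forms equals the weight $w = |\bfk|$. The key bookkeeping step is to encode the argument string $(k_1+1, 1_{l_1-1}, \dots, k_d+1, 1_{l_d-1})$ as a word $\omn^{a_1}\omp^{b_1}\cdots$ (or, concretely, a binary string recording at each position whether the form is $\frac{dt}{t}$ or $\frac{dt}{1-t}$), so that I can track exactly how the substitution permutes the letters. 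Here a block $k_j+1$ contributes a run of $k_j$ copies of $\frac{dt}{t}$ followed by one $\frac{dt}{1-t}$, and each trailing $1$ contributes a single $\frac{dt}{1-t}$; reading the whole word this way gives a clean combinatorial description of both sides.

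Next I would apply the change of variables $t_j \leftrightarrow 1-t_j$ for every integration variable simultaneously. Under $t \mapsto 1-t$ we have $\frac{dt}{t} \mapsto \frac{dt}{1-t}$ and $\frac{dt}{1-t} \mapsto \frac{dt}{t}$, so the two forms are swapped; moreover the inequalities defining the simplex reverse, so the order of the forms in the iterated integral is read backwards. This is exactly the operation captured abstractly by Lemma~\ref{lem:chen's}(ii), with the sign $(-1)^w$ cancelled by the $(-1)^w$ coming from the $w$ differentials $d(1-t_j) = -\,dt_j$. The combinatorial content is therefore: \emph{reverse the binary word and flip every bit}. The remaining task is to verify that applying ``reverse-and-flip'' to the word for $(k_1+1,1_{l_1-1},\dots,k_d+1,1_{l_d-1})$ yields the word for $(l_d+1,1_{k_d-1},\dots,l_1+1,1_{k_1-1})$. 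I would check this by inspecting how the duality operation acts block by block: the run of $k_j$ copies of $\frac{dt}{t}$ and the adjoining copies of $\frac{dt}{1-t}$ get converted into the mirror-image pattern that precisely spells out the dual string, with the roles of the $k_j$'s and $l_j$'s exchanged and the blocks listed in reverse order.

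The main obstacle I anticipate is purely combinatorial rather than analytic: correctly verifying that the reverse-and-flip involution on words matches the stated index transformation, especially handling the boundary letters where the exponent-$1$ entries $1_{l_j-1}$ meet the following block $k_{j+1}+1$. A clean way to sidestep ad hoc case analysis is to observe that reverse-and-flip is manifestly an involution on binary words of length $w$ that interchanges the two letters, and that the map $(k_1+1,1_{l_1-1},\dots) \mapsto (l_d+1,1_{k_d-1},\dots)$ is likewise an involution on admissible compositions; it then suffices to check that the two involutions agree under the word encoding, which reduces to matching a single $\frac{dt}{t}$-run with an adjacent $\frac{dt}{1-t}$-run and propagating the pattern across blocks. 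Once this dictionary is established, the equality of the two iterated integrals is immediate, and the duality relation follows.
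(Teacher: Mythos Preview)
Your proposal is correct and follows exactly the approach indicated in the paper, which simply remarks that the duality follows immediately from the change of variables $t_j\leftrightarrow 1-t_j$ in the iterated integral representation \eqref{equ:iteratedForm}. You have supplied the combinatorial details (the ``reverse-and-flip'' involution on the binary word) that the paper leaves implicit, but the underlying idea is identical.
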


\subsection{Double shuffle relations of MZVs}
The most fundamental question concerning MZVs is their relations over the rational number field $\Q$. Euler's proof of $\zeta(2)=\pi^2/6$
implies that $\zeta(2)$ is transcendental due to the transcendence of $\pi$. His idea can be easily extended to show that every
Riemann zeta value at positive even integer $\zeta(2n)$ is a rational multiple of $\pi^{2n}$.
On the other hand, even though it is widely believed that $\zeta(2n+1)$ is transcendental for every $n\ge 1$ not a single case has
been proved so far. In 1978, Ap\'ery \cite{Apery1978} shocked the math world by showing that $\zeta(3)$ is irrational by a careful
and ingenious manipulation of the following series
\begin{equation}\label{equ:AperySeries}
\zeta(3)=\frac52\sum_{n\ge1}\frac{(-1)^{n-1}}{n^3\binom{2n}{n}}.
\end{equation}

It is therefore natural to ask, more generally, whether MZVs are all transcendental or not. As this is out of reach at the moment, we are led to the next problem: how can we find as many rational relations between MZVs as possible? The answer is given conjecturally by the so-called regularized double shuffle relations which, roughly speaking, are consequences of multiplying MZVs in two different ways.

When we multiply two MZVs, first we can apply their defining series and express the product as a $\Z$-linear combination of MZVs of the same weight. For example,
\begin{align*}
 \zeta(m)\zeta(n)=\sum_{k\ge 1} \frac1{k^m}\sum_{l\ge 1} \frac1{l^n}
 =&\, \sum_{k>l\ge 1} \frac1{k^ml^n}+\sum_{k=l\ge 1} \frac1{k^ml^n}+\sum_{l>k\ge 1} \frac1{k^ml^n}\\
 =&\, \zeta(m,n)+\zeta(m+n)+ \zeta(n,m)
\end{align*}
for all positive integers $m,n\ge 2$. This clearly generalizes to MZVs of arbitrary depths. We call the terms such as the first and the third in the last line above ``shuffle terms'' and the others ``stuffing terms'' (i.e., obtained by some argument merges). This way of expanding the product is called the \emph{stuffle product}, which was first coined in \cite{BorweinBBL1998} although similar structures appeared implicitly already in Cartier's work \cite{Cartier1972} in early 1970s. For example, using stuffle product we see that
\begin{equation}\label{equ:z2stuffle}
 \zeta(2)^2=2\zeta(2,2)+\zeta(4).
\end{equation}

The second way of multiplying the MZVs comes from their iterated integral expression \eqref{equ:iteratedForm}.
For this we need to apply Chen's Lemma~\ref{lem:chen's}(iii) using its shuffle product structure. For example, setting
\begin{equation}\label{equ:set1fomr}
\tx_0= \frac{dt}{t} ,\qquad \tx_1=\frac{dt}{1-t}
\end{equation}
we have
\begin{align}\label{equ:z2shuffle}
 \zeta(2)^2=\int_0^1\tx_0 \tx_1 \sha \tx_0 \tx_1
 =&\, 2\int_0^1\tx_0 \tx_1\tx_0 \tx_1+ 4\int_0^1 \tx_0 \tx_0 \tx_1\tx_1 \nonumber\\
 =&\, 2\zeta(2,2)+4\zeta(3,1).
\end{align}
Combining \eqref{equ:z2shuffle} with \eqref{equ:z2stuffle} we then derive a \emph{finite double shuffle relation} (DBSF)
\begin{equation*}
\zeta(4)=4\zeta(3,1).
\end{equation*}
We see clearly that any finite DBSF must have weight at least four.

\subsection{Regularization and renormalization of MZVs}
A natural question now arises: is it possible to prove Euler's identity \eqref{equ:EulerId}
by extending the above idea? This amounts to the question of how to define ``$\zeta(1)$'' properly by generalizing both the series definition \eqref{equ:MZVdefn} and the integral expression \eqref{equ:iteratedForm} to include all non-admissible MZVs. This was done
by K.\ Ihara, M.\ Kaneko and D.\ Zagier in their ground-breaking work \cite{IKZ2006}.

For any $\bfk=(k_1,\dots,k_d)\in\N^d$ one defines $\bfw(\bfk):=\tx_0^{k_1-1} \tx_1 \dotsm\tx_0^{k_d-1} \tx_1$ as its associated word.
Using Hoffman's algebraic set-up laid out in \cite{Hoffman1997}, Ihara et al. showed that one can extend the shuffle product $\sha$ of words for admissible MZVs to that of non-admissible ones after suitable regularization. Further, one can define a stuffle product $\ast$
on words reflecting the stuffle product of admissible MZVs and then extend it to non-admissible ones.

To begin with, by abuse of notation, setting $\zeta(\bfw(\bfk))=\zeta(\bfk)$ and linearly extending $\zeta$ over admissible words,
we have, for admissible $\bfk$ and $\bfl$,
\begin{equation}\label{equ:finiteDBSF}
\zeta(\bfk)\zeta(\bfl)=\zeta\big(\bfw(\bfk) \sha \bfw(\bfl) \big)=\zeta\big(\bfw(\bfk) \ast \bfw(\bfl) \big).
\end{equation}
This is nothing but the finite DBSF. The key observation here is that for any $\ell\ge 0$ and
$\bfk=(1_\ell,k_{\ell+1},\dots,k_d)\in\N^d$ ($k_{\ell+1}\ge 2$ if $\ell<d$), one can find the asymptotic
expansion of the \emph{regularized iterated integral}
\begin{align*}
\zeta^{(\eps)}(\bfk):= \int_0^{1-\eps} \bfw(\bfk)= P_\ell(\log \eps)+O(\eps \log^d \eps) \quad \text{as }\eps\to 0^+,
\end{align*}
and the \emph{multiple harmonic sums}
\begin{align*}
H_M(\bfk):= \sum\limits_{M\ge n_1>\cdots>n_d>0 } \frac{1}{n_1^{k_1}\cdots n_d^{k_d}}
=Q_\ell(\log M+\gamma)+O(M^{-1}\log^d M)
\end{align*}
as $M\to\infty$, where $\gamma$ is Euler's constant, $P_\ell(T)$ and $Q_\ell(T)$ are polynomials in $T$ of degree $\ell$ whose coefficients
of $T^j$, if nonzero, are $\Q$-linear combinations MZVs of weight $|\bfk|-j$. Moreover, if $\ell=0$ then $P_\ell(T)=Q_\ell(T)=\zeta(\bfk).$

\begin{defn}
The $\sha$- and $\ast$-\emph{regularization} of $\zeta(\bfk)$
are defined, respectively, by
\begin{equation}\label{equ:MZVreg}
 \zeta_\sha(\bfk):=P_\ell(T) \quad\text{and} \quad \zeta_\ast(\bfk):=Q_\ell(T).
\end{equation}
Furthermore, the value $P_\ell(0)$ is called the \emph{renormalization} or \emph{renormalized value}
of the divergent integral $\int_{0}^{1} \bfw(\bfk)$
representing the non-admissible MZV $\zeta(\bfk)$, denoted by
\begin{equation}\label{equ:renormMZV}
 \rint{0}{1}{-3.3ex}{1.5ex} \bfw(\bfk).
\end{equation}
\end{defn}

Ihara et al. showed that for any words $\bfu$ and $\bfv$
not ending with $\tx_0$ (representing either admissible or non-admissible MZVs)
\begin{equation*}
\zeta_\sha(\bfu)\zeta_\sha(\bfv)=\zeta_\sha(\bfu \sha \bfv)\quad\text{and} \quad
 \zeta_\ast(\bfu)\zeta_\ast(\bfv)=\zeta_\ast(\bfu \ast \bfv).
\end{equation*}
Moreover, there is an $\R$-linear map $\rho:\R[T]\to \R[T]$ defined by
$$
\rho(\exp(Tu))=\exp\Big( \sum_{n\ge 2} \frac{(-1)^n}{n} \zeta(n) u^n \Big)\exp(Tu)
$$
for a formal variable $u$, such that $\rho\circ \zeta_\ast =\zeta_\sha$. This relation is called the
\emph{regularized} (or \emph{extended}) DBSF.

As an application of the regularized DBSF we can now prove Euler's identity \eqref{equ:EulerId}. Indeed, from the stuffle product
of $H_M(1)H_M(2)$ we have
\begin{align*}
\zeta_\ast(1)\zeta(2)=\zeta_\ast(\tx_0\tx_1\ast \tx_1)
&\, =\zeta_\ast(\tx_1\tx_0\tx_1+\tx_0\tx_1\tx_1+\tx_0^2\tx_1)\\
&\, =\zeta_\ast(1,2)+\zeta(2,1)+\zeta(3).
\end{align*}
On the other hand, from the shuffle product of $\zeta^{(\eps)}(1)\zeta^{(\eps)}(2)$, we see that
$$
\zeta_\sha(1)\zeta(2)=\zeta_\sha(\tx_0\tx_1\sha\tx_1)=\zeta_\sha(\tx_1\tx_0\tx_1+2\tx_0\tx_1\tx_1)
=\zeta_\sha(1,2)+2\zeta(2,1).
$$
Hence the regularized DBSF implies that
\begin{equation*}
\zeta(2)T-2\zeta(2,1)=\zeta_\sha(1,2)=\rho(\zeta_\ast(1,2))=
\zeta(2)T-\zeta(2,1)-\zeta(3),
\end{equation*}
since $\rho(T)=T$. Thus Euler's identity $\zeta(2,1)=\zeta(3)$ follows immediately. Incidentally, we find that
the renormalized value of ``$\zeta(1,2)$'' is
$$
 \rint{0}{1}{-3.3ex}{1.5ex} \tx_1\tx_0\tx_1=\zeta_\sha(1,2)\big|_{T=0}= -2\zeta(2,1).
$$

\section{Multiple polylogarithm and colored MZVs}
\subsection{Motivation}
Many mathematicians and physicists have been attracted to the study of MZVs because of the recent advancement in
computer science which has enabled them to compute enormous integrals produced from complicated Feynman diagrams.
It turns out that what comes out of these computations are not only MZVs but also some special values of
multiple polylogarithms. Fix $\bfk=(k_1,\dotsc,k_d)\in\N^d$. For complex variables $(z_1,\dotsc,z_d)\in\CC^d$ one defines the \emph{multiple polylogarithm} of \emph{weight} $|\bfk|:=k_1+\dotsm+ k_d$ and \emph{depth} $d$ by
\begin{equation}\label{equ:LiDefn}
\Li_\bfk(z_1,\dotsc,z_d):=
\sum\limits_{n_1>\cdots>n_d>0 } \frac{z_1^{n_1}\dotsm z_d^{n_d}}{n_1^{k_1}\cdots n_d^{k_d}},
\end{equation}
which converges if $|z_1\dotsm z_j|<1$ for all $j=1,\dots,d$.
For convenience, if $d=0$ we set $\bfk=\emptyset$ and $\Li_\emptyset=1$.
As in the logarithm case, the multiple polylogarithm \eqref{equ:LiDefn} can be analytically continued to a
multi-valued memomorphic function on $\CC^d$ (see \cite[Ch.\ 2]{Zhao2016}).

The name ``Li'', made standard in Lewin's book \cite{Lewin1981}, comes from the French phrase ``logarithmes it\'er\'es''
due to the fact that \eqref{equ:LiDefn} has an iterated integral representation. With an additional complex variable $z$, we have
\begin{equation}\label{equ-MPLitInt}
\Li_\bfk(zz_1,\dotsc,z_d)= \int_0^z \left(\frac{dt}{t}\right)^{k_1-1} \frac{dt}{u_1-t} \dotsm
\left(\frac{dt}{t}\right)^{k_d-1} \frac{dt}{u_d-t},
\end{equation}
where $u_j=(z_1\dotsm z_j)^{-1}$, if $z_1\dotsm z_d\ne 0$. This formula can be easily proved by induction on the weight $|\bfk|$ by differentiating with respect to $z$. Formula \eqref{equ-MPLitInt} provides an easy way to analytically continue the multiple polylogarithm.

Let $N,d\in\N$ and $\bfk=(k_1,\dotsc,k_d)\in\N^d$. For any $N$th roots of unity
$\eta_1,\dotsc,\eta_d$ one calls $\Li_\bfk(\eta_1,\dotsc,\eta_d)$
defined by \eqref{equ:LiDefn} a \emph{colored multiple zeta value} (CMZV) of \emph{level} $N$,
which converges if and only if $(k_1,\eta_1)\ne(1,1)$ (see, e.g., \cite[Lemma 3.3.16]{Panzer2014c}).
These values at lower levels appeared unexpectedly
in the numerical computation of many Feynman integrals in the 1990s when the data from CERN and other labs around the world
were analyzed by the mathematicians and theoretical physicists with the aid of high performance computers for the first time.
We refer to the interested reader to
\cite[\S 14.7]{Zhao2016} for the fascinating history centered around this topic.

\subsection{Single variable multiple polylogarithm}
We have already seen that the single variable multiple polylogarithm
defined by \eqref{equ:singleVMPL} is the key to the proof of the MZV duality relation.
By setting $z_1=\cdots=z_d=1$ in \eqref{equ-MPLitInt} we get
\begin{equation}\label{equ-singleMPLitInt}
\Li_\bfk(z)= \int_0^z \left(\frac{dt}{t}\right)^{k_1-1} \frac{dt}{1-t} \dotsm
\left(\frac{dt}{t}\right)^{k_d-1} \frac{dt}{1-t}.
\end{equation}
To express succinctly the fact that only 1-froms $d\log t$ and $d\log(1-t)$ are needed,
we denote the right-hand side of \eqref{equ-singleMPLitInt} by $\int_0^z [d\log t, d\log(1-t)]_w$
where $w=|\bfk|$ is the number of such 1-forms. We may even regard this form as an analytic contination
of $\Li_\bfk(z)$.

The following lemma is crucial for us to understand
the behavior of $\Li_\bfk(z)$ when $z\to \infty$. To state it more precisely, for $s+1$ compositions
$\bfn_j\in\N^{d_j}$ ($0\le j\le s$), write $\underline{\bfn}=(\bfn_0,\dots,\bfn_s)$,
$|\underline{\bfn}|=|\bfn_0|+\dotsm+|\bfn_s|$ and $\dep(\underline{\bfn})=d_0+\dotsm+d_s$.
Here we allow $d_j=0$ in which case we set $\bfn_j=\emptyset,\Li_{\emptyset}(z)=\zeta(\emptyset)=1$.
\begin{lem} \label{lem:Panzer}
For any $d\in\N$ and $\bfkk\in\N^d$, there are rational numbers $q(a,b,\underline{\bfnn})$ such that
\begin{equation}\label{equ:Panzer}
\Li_\bfkk(1/z)=(-1)^{|\bfkk|-d}\Li_\bfkk(z)+\sum_{a,b,\underline{\bfnn}} q(a,b,\underline{\bfnn}) (2\pi i)^{a} \log^{b}(-z) \Li_{\bfnn_0}(z) \prod_{j=1}^s \zeta(\bfnn_j),
\end{equation}
where $a,b,s$ range over nonnegative integers and $\underline{\bfnn}=(\bfnn_0,\dots,\bfnn_s)$
ranges over $s+1$ compositions such that $a+b+|\underline{\bfnn}|=|\bfkk|$ and $\dep(\underline{\bfnn})<d$.
\end{lem}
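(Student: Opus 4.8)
The plan is to start from the iterated-integral representation \eqref{equ-singleMPLitInt}, writing $\Li_\bfkk(z)=\int_0^z W$ with the word $W=\tx_0^{k_1-1}\tx_1\cdots\tx_0^{k_d-1}\tx_1$ in the two $1$-forms $\tx_0=dt/t$ and $\tx_1=dt/(1-t)$ of \eqref{equ:set1fomr}, so that $\Li_\bfkk(1/z)=\int_0^{1/z}W$. The engine of the proof is the change of variables $t\mapsto 1/t$. A direct computation (using $dt/(1-t)=ds/s+ds/(1-s)$ after $t=1/s$) shows $\tx_0\mapsto -\tx_0$ and $\tx_1\mapsto \tx_0+\tx_1$, while the path from $0$ to $1/z$ becomes a path $\delta$ from $\infty$ to $z$. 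Substituting into $W$ and expanding multilinearly, the term in which every $\tx_1$ is kept carries the sign $(-1)^{|\bfkk|-d}$ (the number of factors $\tx_0$ in $W$) and reproduces $W$, whereas every other term has converted at least one separating $\tx_1$ into an $\tx_0$.

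Next I would apply Chen's Lemma~\ref{lem:chen's}(i) to break $\delta$ at the singular point $0$, as a path $\infty\to 0$ followed by a path $0\to z$. For each word $V$ produced in the expansion this gives $\int_\delta V=\sum_j \big(\int_{0\to z}V_{\le j}\big)\big(\int_{\infty\to 0}V_{>j}\big)$. The factor $\int_{0\to z}V_{\le j}$ is a (possibly regularized) single-variable polylogarithm $\Li_{\bfnn_0}(z)$, while $\int_{\infty\to 0}V_{>j}$ is a constant period built only from $\tx_0,\tx_1$, hence a $\Q$-linear combination of MZVs decorated by powers of $2\pi i$ and logarithms of the regulators at $1$ and $\infty$. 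In the distinguished summand, with $V=W$ and empty suffix, the full word is convergent and returns exactly $(-1)^{|\bfkk|-d}\int_{0\to z}W=(-1)^{|\bfkk|-d}\Li_\bfkk(z)$, the main term of \eqref{equ:Panzer}.

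Two bookkeeping facts then give the stated shape. Weight is automatic, since both the substitution and the path composition preserve the number of $1$-forms, forcing $a+b+|\underline{\bfnn}|=|\bfkk|$. For the depth bound I would argue that every surviving term other than the main one loses at least one $\tx_1$: either the substitution turned some $\tx_1$ into $\tx_0$, so that $V$ has fewer than $d$ factors $\tx_1$ altogether; or $V=W$ but the suffix $V_{>j}$ is nonempty, hence contains the final $\tx_1$ of $W$, whose regularized integral from the doubly-singular pair $(\infty,0)$ contributes only $2\pi i$-powers rather than a positive-depth $\zeta$. In either case $\dep(\underline{\bfnn})\le d-1$, and the rationality of the $q(a,b,\underline{\bfnn})$ is inherited from the rational structure of MZVs and of the shuffle expansion.

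The hard part will be the regularization at the two singular endpoints $0$ and $\infty$. Since $\tx_0$ and $\tx_1$ are singular at $0,1,\infty$, the individual factors $\int_{0\to z}V_{\le j}$ and $\int_{\infty\to 0}V_{>j}$ diverge and must be defined through tangential base points; the delicate point is to choose these base points so that the regulator at the shared endpoint $0$ cancels between the two factors, leaving precisely $\log(-z)$ (together with the $2\pi i$ from crossing the branch point at $\infty$). Making this cancellation explicit, and in the same stroke verifying that each constant $\int_{\infty\to 0}V_{>j}$ has MZV-depth strictly smaller than its number of $\tx_1$'s, is the real content. I would organize this by induction on the depth $d$: the base case $d=1$ is the classical inversion of $\Li_{k}$, whose corrections are $\log(-z)$-polynomials with $(2\pi i)$-power coefficients and no positive-depth $\zeta$, and the inductive step lets one replace each constant period by known data of depth $<d$, closing the argument.
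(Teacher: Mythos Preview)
The paper offers no self-contained proof; it defers in a single sentence to the proof of Theorem~1.3 in~\cite{Panzer2017}. Your outline---the inversion $t\mapsto 1/t$ producing $\tx_0\mapsto-\tx_0$, $\tx_1\mapsto\tx_0+\tx_1$, then path-composition via Lemma~\ref{lem:chen's}(i) and shuffle regularization at $0$ and $\infty$---is the standard route to such parity statements and is in substance the argument the paper is citing.

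Your depth-bound reasoning needs sharpening in two places. First, the claim that for $V=W$ with nonempty suffix the regularized $\int_{\infty\to 0}V_{>j}$ ``contributes only $2\pi i$-powers'' is too strong: the paper's own example $\Li^\sha_{2,1}(\infty)=\zeta(3)$, displayed immediately after the lemma, is precisely a depth-$2$ word whose regularized value at $\infty$ is the depth-$1$ period $\zeta(3)$, not a $(2\pi i)$-power. Second, the induction on $d$ you propose does not obviously close: at $j=0$ the suffix is $W$ itself, of depth $d$, so the hypothesis (available only in depth $<d$) does not directly control $\int_{\infty\to 0}W$. In Panzer's argument the depth drop comes from a generating-series computation (equivalently, from the group relation among the local monodromies at $0,1,\infty$), not from bounding each suffix integral separately; if you want your path-splitting version to go through you will need either that global mechanism or a direct argument that the depth-$d$ contributions from all $(V,j)$ pairs with $j<|\bfkk|$ cancel.
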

\begin{proof} This follows from the proof of \cite[Theorem~1.3]{Panzer2017}.
\end{proof}

Motivated by Lemma \ref{lem:Panzer}, we may define the \emph{regularized value}
$\Li^\sha_\bfk(\infty;T)$ by replacing $\log(-z)$ by $T$ and seting
$\bfn_0=\emptyset$ in \eqref{equ:Panzer} since $\Li_{\bfn_0}(0)=0$ for all $\bfn_0\ne\emptyset$.
Moreover, we can now obtain its \emph{renormalized value} by setting $T=0$:
\begin{equation*}
\Li^\sha_\bfk(\infty)=\sum_{a,\bfn_1,\dotsc,\bfn_s} q(a,0,\emptyset,\bfn_1,\dotsc,\bfn_s) (2\pi i)^{a} \prod_{j=1}^s \zeta(\bfn_j).
\end{equation*}
In fact, by staying in the world of real numbers when taking the limit we see that $a$ must be even so that we may use $\zeta(2)$ instead of powers of $2\pi i$. Hence we may rewrite the above as
\begin{equation} \label{equ:RenormMPLinfty}
\Li^\sha_\bfk(\infty):=\rint{0}{\infty}{-4.2ex}{2ex} \bfw(\bfk)
:=\sum_{2|a,\bfn_1,\dotsc,\bfn_s} q'(a,0,\emptyset,\bfn_1,\dotsc,\bfn_s) \zeta(2)^{a/2}\prod_{j=1}^s \zeta(\bfn_j)
\end{equation}
for some $q'\in\Q$. The example on the bottom of \cite[p. 96]{Panzer2017} can be written as
\begin{equation*}
\Li_{2,1}(1/z)=\Li_{3}(z)-\Li_{2,1}(z)+\zeta(3)-\log(-z)\Li_{2}(z)-\frac16 \log^3(-z).
\end{equation*}
Thus $\Li^\sha_{2,1}(\infty)=\zeta(3)$.

We will see in a moment that $\Li_\bfk(z)$ plays an important role in Au's recent paper \cite{Au2020} in which
he proved the following result concerning some Ap\'ery-like sums (see the next section).
Let $\CMZV_w^N$ be the $\Q$-span of all CMZVs of weight $w$ and level $N$.
\begin{thm}
The Ap\'ery-like sums
\begin{align*}
& \sum_{n=1}^\infty \frac{ 4^n H_{n-1}(\bfkk)}{n^l \binom{2n}{n}} \in\CMZV_{|\bfkk|+l}^2,\qquad
\sum_{n=1}^\infty \frac{ 2^n H_{n-1}(\bfkk)}{n^l\binom{2n}{n}} \in\CMZV_{|\bfkk|+l}^4.
\end{align*}
\end{thm}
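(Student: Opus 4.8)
The plan is to realize each sum as a single iterated integral over $[0,1]$ and then rationalize the central binomial kernel by an explicit change of variables. First I would record the elementary generating identity
\[
\sum_{n\ge 1} H_{n-1}(\bfkk)\, z^n=\frac{z}{1-z}\,\Li_\bfkk(z),
\]
which holds because $H_{n-1}(\bfkk)$ is precisely the $(n-1)$st partial sum of the coefficient sequence of $\Li_\bfkk(z)$. Dividing by $n^{l-1}$ amounts to an $(l-1)$-fold application of $z\mapsto\int_0^z(\,\cdot\,)\,dw/w$, so setting $G_1(z)=\int_0^z \Li_\bfkk(w)\,dw/(1-w)$ and $G_{j}(z)=\int_0^z G_{j-1}(w)\,dw/w$ produces, for each $j$, a single-variable iterated integral in the forms $dw/w$ and $dw/(1-w)$ of weight $|\bfkk|+j-1$ whose power series is $\sum_{n\ge1} H_{n-1}(\bfkk)z^n/n^{\,j}$. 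Using the Beta integral $\binom{2n}{n}^{-1}=n\int_0^1 t^{n-1}(1-t)^n\,dt$ I would then write, for $c\in\{2,4\}$,
\[
\sum_{n\ge1}\frac{c^n H_{n-1}(\bfkk)}{n^l\binom{2n}{n}}=\int_0^1\frac{G_{l-1}\big(c\,t(1-t)\big)}{t}\,dt ,
\]
a weight-$(|\bfkk|+l)$ iterated integral whose only non-$d\log$ ingredient, the polylogarithm argument, is the quadratic $c\,t(1-t)$.

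For the first sum, $c=4$, I would substitute $s=1-2t$, so that $4t(1-t)=1-s^2$. Under this map the inner form $dw/w$ pulls back to a $\Q$-combination of $ds/(1-s)$ and $ds/(1+s)$, the form $dw/(1-w)$ pulls back to a multiple of $ds/s$, and the outer form $dt/t$ becomes a multiple of $ds/(1-s)$; every singular point thus lies in $\{0,1,-1\}$, i.e. at a second root of unity. The path $t:0\to1$ becomes $s:1\to-1$, which I would split at the regular point $s=0$ by Chen's Lemma~\ref{lem:chen's}(i) (reversing one half by (ii)), expressing the result as a $\Q$-linear combination of iterated integrals from $0$ to $\pm1$ in the level-$2$ alphabet, hence as an element of $\CMZV^2_{|\bfkk|+l}$.

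For the second sum, $c=2$, the argument $2t(1-t)$ never leaves $[0,\tfrac12]$, so $G_{l-1}$ stays inside its disk of convergence; here $dw/(1-w)$ pulls back to a form with poles at the roots $\tfrac{1\pm i}{2}$ of $1-2t(1-t)$. The substitution $u=1-2t$ now sends the four singular points $0,1,\tfrac{1+i}{2},\tfrac{1-i}{2}$ to $1,-1,-i,i$ — exactly the fourth roots of unity — so every pulled-back form is a level-$4$ $d\log$ form, and no pole sits at the interior point $u=0$. Splitting the path $u:1\to-1$ at $u=0$ via Lemma~\ref{lem:chen's} again yields iterated integrals from $0$ to $\pm1$ in the level-$4$ alphabet, placing the sum in $\CMZV^4_{|\bfkk|+l}$. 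In both cases the weight count is immediate: $|\bfkk|$ forms come from $\Li_\bfkk$, $(l-1)$ from the integrations building $G_{l-1}$, and one from the outer $dt/t$.

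The hard part will be the analytic bookkeeping at the endpoint $s=0$ (resp. $u=0$), where the $1/(1-z)$ factor and the $1/n^l$ integrations interact and where the naive series can diverge (already $l=1$ with admissible $\bfkk$ diverges like $\sum n^{-1/2}$). I would handle this exactly as in the regularization and renormalization procedure recalled earlier: interpret the boundary behaviour through the $\sha$-regularization and Lemma~\ref{lem:Panzer}, so that divergent iterated integrals are replaced by their renormalized values in the sense of $\rint{0}{1}{-3.3ex}{1.5ex}\bfw(\bfkk)$, with the tangential limits contributing only $\Q[\zeta(2)]$-coefficients. Verifying that after this regularization the representative genuinely remains in the claimed weight-graded level-$2$ (resp. level-$4$) space — that is, that neither the path decomposition and reversal nor the substitution introduces spurious letters or higher-level periods — is the main obstacle, and is precisely where Au's argument in \cite{Au2020} does the substantive work.
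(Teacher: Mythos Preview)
Your argument is correct in outline but takes a more laborious route than the paper's. You represent $\binom{2n}{n}^{-1}$ by the symmetric Beta integral $n\int_0^1 t^{n-1}(1-t)^n\,dt$, which feeds the polylogarithm the argument $ct(1-t)$; this is \emph{not} $N$-unital (for $c=4$ the unique zero of $1-4t(1-t)$ sits at $t=\tfrac12$), so you are forced into the affine change $s=1-2t$ and then into splitting the path $[-1,1]$ at $s=0$. For $c=4$ that interior point is a genuine pole of the pulled-back form $dw/(1-w)\mapsto-2\,ds/s$, so the regularization you flag as the ``hard part'' really is needed.

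The paper instead follows Au and uses \eqref{equ:AuN=2},
\[
\frac{1}{n\binom{2n}{n}}=\int_0^1\frac{x^{n-1}}{(1+x)^{2n}}\,dx,
\]
which hands the polylogarithm the argument $U_N(x)=8x/\bigl(N(1+x)^2\bigr)$. This is $N$-unital on the nose---one has $1-U_2(x)=(1-x)^2/(1+x)^2$ and $1-U_4(x)=(1+x^2)/(1+x)^2$---and moreover $U_N(0)=0$, so $\int_0^1 x^{-1}\Li_{l-1,\bfkk}(U_N(x))\,dx$ is already an iterated integral from $0$ to $1$ in the level-$N$ alphabet, and Theorem~\ref{thm-NUnitalU(0)} applies with no path splitting and no tangential-basepoint analysis. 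The two routes are in fact connected: exploiting the symmetry $t\leftrightarrow 1-t$ in your integral to reduce to $[0,\tfrac12]$ and then substituting $t=x/(1+x)$ gives exactly $ct(1-t)=U_N(x)$ and $dt/\bigl(t(1-t)\bigr)=dx/x$, landing on the paper's formula. Your choice $s=1-2t$ is the less fortunate substitution; it eventually reaches the same alphabet, but only after creating the regularization problem that Au's parametrization sidesteps entirely.
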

The key idea in Au's proof is his observation that
\begin{equation}\label{equ:AuN=2}
\frac1{n \binom{2n}{n} }= \int_0^1 \frac{x^{n-1}}{(1+x)^{2n}}\, dx
\end{equation}
and that for the rational function $U_N(x)=8x/\big(N(1+x)^2\big)$, $N=2$ or 4, the integrals
\begin{equation*}
 \int_0^1 \frac1{x} \Li_\bfk( U_N(x) ) \, dx \in \CMZV_{|\bfk|+1}^N.
\end{equation*}
To prove this we need the concept of $N$-unital functions (see \S{}\ref{sec:Unital}).

\section{Ap\'ery-like sums}

In his celebrated proof of irrationality of $\zeta(2)$ and $\zeta(3)$ in 1979, Ap\'ery used crucially the following two identities
\begin{equation}\label{equ:AperyZ2Z3}
 \zeta(2)=3\sum_{n\ge 1} \frac{1}{n^2\binn} \quad\text{and}\quad \zeta(3)=\frac52\sum_{n\ge 1} \frac{(-1)^{n-1}}{n^3\binn}.
\end{equation}
These were extended to formulas expressing other Riemann zeta values in terms of series similar to those on the right-hand side of \eqref{equ:AperyZ2Z3} (see \cite{AlmkvistGr1999,BorweinBr1997,Koecher1980}).

Also motivated by Ap\'ery's proof, Leshchiner \cite{Leshchiner} generalized these identities to higher weight (alternating)
Riemann zeta values and their odd-indexed analogs. At arbitrary depth, one may define two types of generalizations with the binomial coefficients appearing on either the numerator or the denominator. One may further introduce other binomial coefficients such as
$\binom{3n}{n}$ and $\binom{4n}{2n}$, which we will not consider in this paper.

If the central binomial coefficients appear on the numerator (resp.\ denominator) then we call the sum a \emph{Ap\'ery-type binomial series} (resp.\ \emph{inverse binomial series}).
For example, for any $d\in\N$, $\bfs=(s_1,\ldots, s_d)\in\N^d$, and a complex variable $z$
we can define a class of Ap\'ery-type inverse binomial series by
\begin{equation}\label{defn:gs}
\gs(\bfs;z):=\sum_{n_1> n_2>\cdots>n_d>0} {\binom{2n_1}{n_1}}^{-1} \frac{(2z)^{2n_1}}{(2n_1)^{s_1}\cdots (2n_d)^{s_d}}.
\end{equation}
Here we have renormalized the series so that its odd-indexed variation behaves similarly. By Stirling's formula
\begin{align*}
 4^n{\binn}^{-1} \sim \sqrt{\pi n}.
\end{align*}
We will again call the sum $|\bfs|:=s_1+\dots+s_d$ the \emph{weight} of the series $\gs(\bfs;z)$ and $d$ the \emph{depth}.
We see that $\gs(\bfs;z)$ converges for all $|z|<1$ and, if $\bfs$ is admissible, $\gs(\bfs;z)$ converges even for $|z|=1$.
In view of \eqref{equ:AperyZ2Z3}, one naturally wonders if such series are related to CMZVs for some special values of $z$.
More specifically, we would like to answer the following question.

\medskip\noindent
{\bf Question 1.} Can we express $\gs(\bfs;z)$ and its alternating version in terms of CMZVs for $z=\sqrt{j}/2$ for $j=1,2,4$? \label{question}
\medskip

Another equally important motivation to study Ap\'ery-like sums comes from theoretical physics. It turns out that
such series, like CMZVs, play significant roles in the evaluation of Feynman integrals. Many experimental work emerged
around the beginning of this century, e.g., see \cite{BorweinBrKa2001,DavydychevDe2001,DavydychevDe2004} for inverse
binomial series and \cite{JegerlehnerKV2003} for binomial series. In fact, odd variations of both types already appeared implicitly
in \cite[(1.1)]{DavydychevDe2004} and \cite[(A.25)]{JegerlehnerKV2003}, respectively. By ``odd variations'' we mean
the series obtained from \eqref{defn:gs} by changing the even parity of some indices $2n_j$ to odd ones $2n_j+1$.

In the theoretical framework of MZVs and their alternating version, considering their odd-indexed variations has been proved to be a very fruitful idea (see \cite{Hoffman2019,KanekoTs2019,XuZhao2020a}). These are essentially some level two CMZVs.

\medskip\noindent
{\bf Question 2.} Can we express odd variations of $\gs(\bfs;z)$ and its alternating version in terms of CMZVs? \label{question2}
\medskip

Many different approaches have been devised to evaluate the Ap\'ery-like sums and their odd variants.
Ablinger \cite{Ablinger2015} first converts many Ap\'ery-type inverse binomial series into
iterated integrals, which are then computed using so-called cyclotomic harmonic polylogarithms.
Chu \cite{Chu2021a,Chu2021b} and his collaborators have applied hypergeometric series transformation to
express many elegant $\Q$-linear combinations of these sums in terms of Riemann zeta values and proved
quite a few conjectures of Z.-W. Sun. However, his method has been applied to Ap\'ery-type inverse binomial series only.
Campbell and his collaborators \cite{CampbellCA2022,CantariniD2019,CampbellDS2019} use the idea of Fourier-Legendre series and the
properties of the complete elliptic integral of the first kind to derive many compact formulas for Ap\'ery-type binomial series but few inverse ones. Moreover, some multiple integrals associated with $N$-posets can also be used to prove some relations among the Ap\'ery-like sums \cite{XuZhao2022d}.

In the rest of this paper, we consider five more methods of evaluating these sums. The main tools are (i) single variable multiple polylogathms composed with unital functions, (ii) the Clausen functions, (iii) the trigonometric iterated integrals, (iv) the fibration method for iterated integrals which can be regarded as an extension of (i), and (v) the contour integrals, respectively.

\section{Ap\'ery-like sums via $N$-unital functions}\label{sec:Unital}
\subsection{$N$-unital functions}
Let $N$ be a positive integer. We say a non-constant rational function $U(x)\in\CC(x)$ is $N$-\emph{unital}
if every zero and pole of both $U(x)$
and $1-U(x)$ is either 0 or an $N$th roots of unity. It follows from \cite{Mason1983} or \cite[Cor. 2.2]{Brunault2021}
that there are only finitely many $N$-unital functions for any fixed $N\in\N$. Let $\vU_N$ be the set of all such functions.
For example, it is not hard to show that $\vU_1=\langle x\rangle_6$ where for each function $f$, $\langle f\rangle_6$
is its $S_3$-orbit defined by
$$
\langle f\rangle_6:=\left\{ f, 1-f,\frac{1}{f},\frac{f}{f-1}, \frac{f-1}f, \frac{-1}{f-1}\right\}.
$$

In fact, we are more interested in the set of all the possible values of $U(0)$.
For small levels this is given by the next result.

\begin{prop} \emph{(\cite{Zhao-4adm})} \label{prop:R0}
Let $N\le 4$ and $\mu_j=\exp(2j\pi i/N)$ for $1\le j<N$. Define
\begin{equation} \label{equ:ConjR0}
 \calC^N:=\{0,1,\infty\} \cup
\left\{
 \begin{array}{ll}
 \displaystyle \bigcup_{j=1}^{\frac{N}{2}} \langle \mu_j \rangle_6, & \hbox{if $N$ is even;} \\
 \displaystyle \bigcup_{\eps=\pm1} \bigcup_{j=1}^{\frac{N-1}{2}} \langle \eps \mu_j \rangle_6, \qquad \ & \hbox{if $N$ is odd.}
 \end{array}
\right.
\end{equation}
Then $U(0)\in \calC^N$ for all $U(x)\in \vU_N$.
\end{prop}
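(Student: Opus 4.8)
The plan is to recast the definition geometrically and then bound the degree of $U$. Put $\calR_N:=\{\zeta\in\CC:\zeta^N=1\}$ and $\calS:=\{0,\infty\}\cup\calR_N$, a set of $N+2$ points of $\mathbb P^1$. Since the finite zeros of $U$, the finite zeros of $1-U$, and the finite poles of $U$ are exactly the finite parts of $U^{-1}(0)$, $U^{-1}(1)$ and $U^{-1}(\infty)$ --- while the source point $\infty$ is left unconstrained --- the function $U$ is $N$-unital precisely when $U^{-1}(\{0,1,\infty\})\subseteq\calS$. Equivalently, $(U,1-U)$ solves the $\calS$-unit equation $f+g=1$ in $\CC(x)$, which is the source of the finiteness of $\vU_N$ recorded before the statement.

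I would next combine the $S_3$-symmetry with Riemann--Hurwitz. Each of the six substitutions defining $\langle\,\cdot\,\rangle_6$ permutes the target points $0,1,\infty$, hence maps $\vU_N$ to itself and transforms the number $U(0)$ by the same substitution; so the set $\{U(0):U\in\vU_N\}$ is a union of orbits $\langle t\rangle_6$ and it suffices to determine $U(0)$ up to these six maps. For the degree, the total ramification of a degree-$d$ self-map of $\mathbb P^1$ is $2d-2$, whence
\[
|U^{-1}(0)|+|U^{-1}(1)|+|U^{-1}(\infty)|=3d-\!\!\sum_{P\in U^{-1}(\{0,1,\infty\})}\!\!(e_P-1)\ \ge\ 3d-(2d-2)=d+2 ,
\]
with $e_P$ the ramification index at $P$. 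As the left side is at most $|\calS|=N+2$, this gives $\deg U\le N\le4$; and if $U(0)\notin\{0,1,\infty\}$ the three fibers avoid the point $0\in\calS$, so the left side is at most $N+1$ and $\deg U\le N-1$. This settles $N=1$ outright and reduces the problem to $\deg U=1$ for $N=2$, to $\deg U\in\{1,2\}$ for $N=3$, and to $\deg U\in\{1,2,3\}$ for $N=4$.

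When $\deg U=1$, the map is a Möbius transformation carrying three points $p,q,r\in\{\infty\}\cup\calR_N$ onto $0,1,\infty$, so $U(0)$ lies in the $\langle\,\cdot\,\rangle_6$-orbit of the cross-ratio of the four points $0,p,q,r\in\calS$. A direct evaluation of such cross-ratios of $N$th roots of unity (allowing $0$ and $\infty$) shows, for $N\le4$, that this orbit is one of $\langle\mu_j\rangle_6$ when $N$ is even and one of $\langle\pm\mu_j\rangle_6$ when $N$ is odd, the extra sign reflecting $-1\notin\calR_N$; this is the step that produces the cyclotomic part of $\calC^N$. For $2\le\deg U\le N-1$ (hence only $N=3,4$) the assumption $U(0)\notin\{0,1,\infty\}$ forces $0$ to be neither a zero nor a pole of $U$ nor of $1-U$, so $U(x)=c\prod_{\zeta\in\calR_N}(x-\zeta)^{a_\zeta}$ and $U(0)=c\,(-1)^{\sum_\zeta a_\zeta}\prod_\zeta\zeta^{a_\zeta}=\pm c\,\omega_0$ with $\omega_0\in\calR_N$. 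It then remains to pin down the leading constant $c$: the requirement that $U^{-1}(1)$ also lie in $\calS$ confines $c$ to a finite cyclotomic set, and because there are only finitely many such $U$ (by the finiteness results of \cite{Mason1983,Brunault2021}) one checks case by case that $U(0)$ reproduces values already in $\calC^N$. A representative instance is the $4$-unital function $-\big((x-1)/(x+1)\big)^2$, of degree two, with $U(0)=-1\in\langle\mu_2\rangle_6$.

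The hard part will be this last step --- controlling $c$ for the higher-degree maps and excluding spurious values. The positive content, namely that the orbits $\langle\mu_j\rangle_6$ and $\langle\pm\mu_j\rangle_6$ actually occur, is delivered cleanly by the degree-one cross-ratio computation; the genuine labor is verifying that degrees two and three contribute nothing outside $\calC^N$. The bound $\deg U\le N-1$ for nontrivial values and the $S_3$-reduction keep this enumeration short, and the finiteness theorem guarantees that it terminates.
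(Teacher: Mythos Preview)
Your strategy is sound and genuinely different from what the paper does. The paper's proof is a single sentence: explicitly enumerate all $N$-unital functions for $N\le4$ (there are $6$, $36$, $84$, and $252$ of them, respectively) and inspect $U(0)$ for each. No structure is imposed; it is a pure brute-force computation, carried out elsewhere and only reported here.

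Your approach is more conceptual. The Riemann--Hurwitz inequality you wrote is exactly the Mason--Stothers/abc bound in this context and gives the clean a~priori estimate $\deg U\le N$, sharpened to $\deg U\le N-1$ when $U(0)\notin\{0,1,\infty\}$; the paper never isolates such a bound. The $S_3$-reduction is correct and cuts the work by a factor of six, and the degree-one case via cross-ratios of $\{0,\infty\}\cup\calR_N$ is the right way to see where the orbits $\langle\mu_j\rangle_6$ (and $\langle-\mu_j\rangle_6$ for odd $N$) come from. These ingredients explain \emph{why} the answer has the shape it does, which the paper's enumeration does not.

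That said, your proposal is not yet a proof: the step you flag as ``the hard part'' --- pinning down the leading constant $c$ for $2\le\deg U\le N-1$ and showing no new values appear --- is only asserted, not executed. For $N=3$ this is a single degree ($d=2$) over three roots of unity, and for $N=4$ it is $d\in\{2,3\}$ over four; these are finite but not entirely trivial checks (for instance, one must rule out that some quadratic $U$ with $c\notin\calR_N\cup\{0\}$ nevertheless has $U^{-1}(1)\subseteq\calR_N\cup\{\infty\}$). Your claim that $c$ lands in a ``finite cyclotomic set'' would need justification --- a~priori the constraint is a system of algebraic equations, and the values of $c$ need not themselves be roots of unity even though $U(0)=\pm c\,\omega_0$ eventually lands in $\calC^N$. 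In the end both your route and the paper's terminate in a finite enumeration; yours is shorter and better organized, but to match the paper's (claimed) completeness you would still have to carry out that last case analysis explicitly.
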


\begin{proof} The proof is by an explicit computation of $\vU_N$ via a tedious albeit elementary process.
Precisely, there are $6$ $1$-unital, $36$ $2$-unital, $84$ $3$-unital, and 252 $4$-unital functions.
\end{proof}

\begin{re}\label{re-5Unital}
When $N\le 4$ we verified the unital functions using the MAGMA program kindly shared with us by F.\ Brunault.
The program shows that there are 336 $5$-unital functions.
Unfortunately, even for $N=5$ the set $\calC^5$ in the proposition is too small to contain all values of 5-unital functions at 0.
Concretely, setting $\mu=\mu_5=\exp(2\pi i/5)$ we have found the following exceptional 5-unital functions:
\begin{align*}
f(x)=&\frac{(\mu^3 + \mu^2 + 2)x^2 + (\mu^4 + \mu^2)x - \mu^2 + \mu - 1}{x^2-(\mu^4 + \mu^2)x + \mu},\\
g(x)=&\frac{(\mu^2 + \mu + 1)x - \mu^2 - \mu - 1}{x-\mu^2}. 
\end{align*}
One can easily check that
$$
f(x)=\frac{(\mu^3 + \mu^2 + 2)(x- 1)(x-\mu)}{(x-\mu^2)(x-\mu^4)}, \quad
g(x)=\frac{(\mu^2 + \mu + 1)(x-1)}{x-\mu^2}
\in\vU_5
$$
since
$$
1-f(x)=-\frac{(\mu^3 + \mu^2 + 1)(x-\mu^3)^2}{(x-\mu^2)(x-\mu^4)}, \quad
1-g(x)=\frac{-(\mu^2 + \mu)(x-\mu^4)}{x-\mu^2}
\in\vU_5.
$$
But none of the following values are in $\calC^5$ defined by \eqref{equ:ConjR0}:
\begin{alignat*}{6}
\langle f(0)\rangle_6=&\, \Big\{\frac{3-\sqrt{5}}2, \frac{\sqrt{5}-1}2, \frac{3+\sqrt{5}}2,
-\frac{1+\sqrt{5}}2, \frac{1-\sqrt{5}}2, \frac{\sqrt{5}+1}2\Big\}, \\
\langle g(0)\rangle_6=&\, \Big\{\mu+\mu^2, 1-\mu-\mu^2,-\mu-\mu^2, 1+\mu+\mu^2,-\mu^3-\mu^4 , 1+\mu^3+\mu^4 \Big\}.
\end{alignat*}
\end{re}

\subsection{Iterated integral involving $N$-unital functions}
We will see that the following conjecture due to Au is crucial in the evaluation of Ap\'ery-like sums via unital functions.
\begin{conj}\label{conj:MPLunital}
Let $\bfkk\in\N^d$. For all $U(x)\in\vU_N$, the renormalized value
\begin{equation*}
 \rint{0}{1}{-3.3ex}{1.3ex} \frac1{x} \Li_\bfkk( U(x) ) \, dx \in \CMZV_{|\bfkk|+1}^N.
\end{equation*}
\end{conj}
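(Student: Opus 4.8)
The plan is to use the defining property of an $N$-unital function to guarantee that neither composing $\Li_\bfkk$ with $U$ nor integrating the result against $dx/x$ ever leaves the level-$N$ iterated-integral world, and that all divergences are pushed to the endpoints of $[0,1]$, where the shuffle regularization recalled in \S1.4 applies verbatim. First I would record the consequence of $N$-unitality: writing $U(x)=c\prod_\zeta(x-\zeta)^{a_\zeta}$ and $1-U(x)=c'\prod_\zeta(x-\zeta)^{b_\zeta}$, where $\zeta$ runs over $\{0\}$ together with the $N$th roots of unity and $a_\zeta,b_\zeta\in\Z$, the logarithmic derivatives
\begin{equation*}
d\log U(x)=\sum_\zeta a_\zeta\,\frac{dx}{x-\zeta},\qquad
d\log\big(1-U(x)\big)=\sum_\zeta b_\zeta\,\frac{dx}{x-\zeta}
\end{equation*}
are $\Z$-linear combinations of the level-$N$ alphabet $\{dx/(x-\zeta)\}$. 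Combining this with the differential relations satisfied by $\Li_\bfkk$ --- namely that $d\,\Li_\bfkk(z)$ equals $\Li_{\bfkk^-}(z)$ times $dz/z$ or $dz/(1-z)$ according as $k_1\ge2$ or $k_1=1$ --- an induction on the weight $|\bfkk|$ shows that $\Li_\bfkk(U(x))$, viewed as an iterated integral in $x$, is a $\Q$-linear combination of level-$N$ functions $\int_0^x(dt/(t-\zeta_1))\cdots(dt/(t-\zeta_m))$ plus the constant $\Li_\bfkk(U(0))$, the latter produced by Chen's path-composition formula (Lemma~\ref{lem:chen's}(i)) for the path $0\to U(0)\to U(x)$.

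Next I would append the outer form $dt/t$ and integrate over $[0,1]$. By the nesting of iterated integrals this rewrites $\rint{0}{1}{-3.3ex}{1.3ex}\frac1{x}\Li_\bfkk(U(x))\,dx$ as a $\Q$-linear combination of iterated integrals $\int_0^1(dt/t)(dt/(t-\zeta_1))\cdots(dt/(t-\zeta_w))$, $w=|\bfkk|$, of weight $|\bfkk|+1$, all built from the level-$N$ alphabet. The decisive geometric point is that the only $N$th root of unity lying in $[0,1]$ is $1$ itself, while $U$ has no zero, pole, or $1$-point in the open interval; hence the integrand is smooth on $(0,1)$ and every singularity is confined to $x=0$ and $x=1$, where it is at worst a power of a logarithm. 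This is exactly the setting of \S1.4: each such iterated integral, once shuffle-regularized at both endpoints, is the standard integral representation of a level-$N$ CMZV of weight $|\bfkk|+1$, and its renormalized value lies in $\CMZV_{|\bfkk|+1}^N$.

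The main obstacle is the regularization bookkeeping, which has two faces. First, the boundary constant $\Li_\bfkk(U(0))$ must itself be a level-$N$ CMZV. For $N\le4$ Proposition~\ref{prop:R0} gives $U(0)\in\calC^N$, and one reduces $\Li_\bfkk(v)$ for $v$ in the $S_3$-orbit $\langle\mu_j\rangle_6$ back to values at $N$th roots of unity through the substitutions $t\mapsto1-t$ and $t\mapsto1/t$ followed by Chen composition against the path $0\to1$ (the same mechanism underlying Lemma~\ref{lem:Panzer}); for $N\ge5$ this step needs genuinely more input, since Remark~\ref{re-5Unital} exhibits values $U(0)$ outside $\calC^N$. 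Second, one must verify that the renormalization defining the left-hand side commutes with the decomposition above, i.e.\ that the renormalized value of the whole integral equals the $\Q$-combination of renormalized level-$N$ CMZVs just produced. The cleanest route is to carry the truncated integral $\int_\eps^{1-\eps}$ throughout, expand it asymptotically as $\eps\to0^+$, check that the coefficient of each power of $\log\eps$ is a lower-weight level-$N$ CMZV, and then extract the constant term. Establishing this compatibility --- rather than the purely formal closure of the level-$N$ alphabet --- is where the real difficulty lies.
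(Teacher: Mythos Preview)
Note first that the statement you are attempting is labeled a \emph{conjecture} in the paper; the paper does not prove it in full. What the paper establishes is the partial result Theorem~\ref{thm-NUnitalU(0)}: the conclusion holds whenever $U(0)\in\calC^N$, which by Proposition~\ref{prop:R0} covers every $N$-unital function for $N\le4$. For $N\ge5$ the paper leaves the statement open, pointing in Remark~\ref{re-5Unital} to explicit $5$-unital functions whose values at~$0$ escape $\calC^5$.

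Your proposal follows essentially the same route as the paper's proof of that partial result. Both arguments (i)~use $N$-unitality to rewrite $d\log U$ and $d\log(1-U)$ in the level-$N$ alphabet, (ii)~invoke Chen's path-composition formula to split off boundary contributions at $U(0)$, and (iii)~reduce everything to showing that each $\int_0^{U(0)}[\tx_0,\tx_1]_j$ lies in $\CMZV_j^N$. The paper then carries out an explicit case analysis over the $S_3$-orbit $\langle\mu\rangle_6$ via the substitutions $t\mapsto1-t$, $t\mapsto1/t$, $t\mapsto(t-\mu)/(1-\mu t)$, $t\mapsto(1+\mu)(1-t)/(1-\mu t)$; you sketch the same mechanism more tersely. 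One small correction to your wording: the boundary contributions are not just the single constant $\Li_\bfkk(U(0))$ but the whole family $\int_0^{U(0)}\gf_{j+1}\cdots\gf_w$ for $0\le j\le w$ (this is the paper's display~\eqref{equ:2ndFac}). Your induction in fact generates all of them, but your text singles out only the top one.

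Your flag that the $N\ge5$ case ``needs genuinely more input'' is exactly right and matches the paper's position: this is the open part of the conjecture, not a gap in your argument. Your more explicit attention to the regularization bookkeeping (carrying $\int_\eps^{1-\eps}$ and extracting the constant term) goes somewhat beyond the paper, which treats that compatibility informally.
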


First, we should explain how the regularization is defined in this context. We start by considering
\begin{equation*}
 \int_{\eta}^{1-\eps} \frac1{x} \Li_\bfk( U(x) ) \, dx
\end{equation*}
for arbitrarily small numbers $\eta,\eps>0$. The main philosophy is that under ideal situations like ours, we may rewrite the
integrand as a finite sum of ``well-behaved'' functions so that the integral of each part can be expressed as
a polynomial of $\log \eta$ and $\log \eps$ modulo $O(\eta\log^J \eta)$ and $O(\eps\log^J \eps)$ where $J=\dep(\bfk)+1$.
This process, which is completely similar
to that for obtaining the regularized DBSF of MZVs, is again called \emph{regularization}.
Although each summand may diverge as $\eta,\eps\to 0$, we can just consider the constant term of each summand
since we know the total sum itself has finite limit.
This constant term is then called the \emph{renormalization} of the corresponding summand, completely
similar to the renormalizations of \eqref{equ:renormMZV} and \eqref{equ:RenormMPLinfty}.
We again denote this regularized and then renormalized value by
$
\displaystyle \rint{0}{1}{-3.3ex}{1.3ex} \frac1{x} \Li_\bfk( U(x) ) \, dx.
$

Using the notation from the last section, if $U(x)\in\vU_N$ then  by \eqref{equ-singleMPLitInt}
\begin{equation*}
\Li_\bfk( U(x) )=\int_0^{U(x)} \gf_1\dotsm \gf_w,
\end{equation*}
where $w=|\bfk|$ is the weight and the 1-forms $\gf_j$ are either $d\log t$ or $-d\log(1-t)$.
Then Lemma~\ref{lem:chen's}(i) immediately yields that
\begin{equation}\label{equ:2ndFac}
\rint{0}{1}{-3.3ex}{1.3ex} \frac1x \Li_\bfk( U(x) ) \, dx=\sum_{j=1}^w
\left( \rint{0}{1}{-3.3ex}{1.3ex} \frac1x \int_{U(0)}^{U(x)}\gf_1\dotsm \gf_j \, dx \right)
\left(\rint{0}{U(0)}{-6.2ex}{3.6ex}\gf_{j+1} \dotsm \gf_w \right).
\end{equation}
Here we have removed the $j=0$ term since it vanishes after regularization and renormalization.

It is clear to see that under the change of variables $t\to U(t)$ the first factor in the products
above is reduced to an iterated integral over $[0,x]$ of 1-forms of the shape $d\log U(x)$ and $d\log (1-U(x))$.
Then the renormalized value
\begin{equation*}
\rint{0}{1}{-3.3ex}{1.3ex} \frac{dx}{x} \int_{U(0)}^{U(x)} [d\log t,d \log (1-t)]_j
 = \rint{0}{1}{-3.3ex}{1.3ex} \frac1{x} \int_0^x [d\log U(x), d\log (1-U(x))]_j
\end{equation*}
must lie in $\CMZV_{j+1}^N$ by the definition of $N$-unitality.

However, it is much trickier to treat the second factor in \eqref{equ:2ndFac},
which amounts to finding the set $\vU_N$ of all possible $N$-unital functions $U(x)$
and then prove that the iterated integrals $\int_0^{U(0)}\gf_{j+1} \dotsm \gf_w$ lie in the correct space.

\begin{thm}\label{thm-NUnitalU(0)}
Let $\calC^N$ be defined by \eqref{equ:ConjR0}. For any composition of positive integers $\bfk$ and $U(x)\in\vU_N$,
if $U(0)\in\calC^N$ then
\begin{equation*}
 \rint{0}{1}{-3.3ex}{1.3ex} \frac1{x} \Li_\bfkk( U(x) ) \, dx \in \CMZV_{|\bfkk|+1}^N.
\end{equation*}
\end{thm}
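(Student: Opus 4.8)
The plan is to build on the decomposition \eqref{equ:2ndFac}, which already reduces the renormalized integral to a finite $\Z$-linear combination of products of two factors, and to use the fact that the level-$N$ colored multiple zeta values form a $\Q$-algebra graded by weight (a product of two level-$N$ CMZVs is again a level-$N$ CMZV, of added weight). Set $w:=|\bfkk|$. The discussion preceding the theorem shows that the first factor of the $j$-th summand lies in $\CMZV_{j+1}^N$, so the whole theorem reduces to showing that the second factor, namely $\rint{0}{U(0)}{-6.2ex}{3.6ex}\gf_{j+1}\dotsm\gf_w$, lies in $\CMZV_{w-j}^N$; the product of a weight-$(j+1)$ and a weight-$(w-j)$ level-$N$ CMZV then lands in $\CMZV_{w+1}^N=\CMZV_{|\bfkk|+1}^N$. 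Reading off the composition determined by the word $\gf_{j+1}\dotsm\gf_w$, the second factor is a shuffle-regularized multiple polylogarithm value at the point $c:=U(0)$, so everything comes down to the single claim
$$
\int_0^c [d\log t, d\log(1-t)]_{m}\in\CMZV_m^N\qquad\text{for every }c\in\calC^N,
$$
interpreted as its renormalized (constant-term) value whenever the integral diverges.

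First I would dispose of the three exceptional points of $\calC^N$. For $c=0$ the value vanishes in positive weight; for $c=1$ it is a shuffle-regularized MZV and so lies in $\CMZV_m^1\subseteq\CMZV_m^N$; and for $c=\infty$ it is precisely the renormalized value of \eqref{equ:RenormMPLinfty}, which by Lemma~\ref{lem:Panzer} is a $\Q$-combination of $\zeta(2)^{a/2}\prod_j\zeta(\bfnn_j)$ and hence again lies in $\CMZV_m^N$. For every other $c\in\calC^N$ I would write $c=\gs(\mu)$, where $\mu$ is an $N$th root of unity (the extra sign occurring for odd $N$ being absorbed by a distribution relation, which keeps the value at level $N$) and $\gs$ is one of the six Möbius transformations generating the $S_3$-action $\langle\,\cdot\,\rangle_6$. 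When $\gs(\mu)$ is itself a root of unity — the cases $c=\mu$ and $c=1/\mu$ — the value is a level-$N$ CMZV by definition, so these are immediate.

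The heart of the matter is the four remaining orbit representatives. For these I would make the change of variables $t\mapsto\gs(t)$ inside the iterated integral. The point is that each of the forms $d\log t$ and $d\log(1-t)$ is carried by any $\gs\in S_3$ to an integer-linear combination of the same two forms — for instance $t\mapsto 1-t$ interchanges them — so that $\int_0^c$ becomes a $\Z$-linear combination of iterated integrals of $d\log t,\,d\log(1-t)$ over the path from $\gs^{-1}(0)\in\{0,1,\infty\}$ to $\mu$. Splitting each such path at $0$ by Chen's Lemma~\ref{lem:chen's}(i) expresses it through iterated integrals from $0$ to $\mu$, which are level-$N$ CMZVs, multiplied by boundary factors attached to the base point $\gs^{-1}(0)\in\{1,\infty\}$; by the previous paragraph these boundary factors contribute only MZVs and the $\infty$-renormalized values. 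An induction on the length $m$ resolves the resulting recursion while keeping everything inside $\CMZV^N$.

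The step I expect to be the main obstacle is the regularization bookkeeping through the transformations that involve the point at infinity — the inversion $t\mapsto 1/t$ and its conjugates, i.e. the cases $\gs^{-1}(0)=\infty$. Here one must invoke Lemma~\ref{lem:Panzer} together with \eqref{equ:RenormMPLinfty}, and two facts have to be verified with care: that the transcendental prefactors $(2\pi i)^a\log^b(-z)$ produced by inversion reassemble, after taking constant terms, into genuinely real quantities — this is the parity phenomenon whereby $a$ must be even and $(2\pi i)^a$ becomes a power of $\zeta(2)$ — and that the shuffle-regularization (extraction of the constant term in $\log\eta,\log\eps$) commutes with the $S_3$ change of variables, so that no spurious logarithmic or $2\pi i$ terms survive. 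Once this compatibility of renormalizations across the anharmonic group is secured, the four hard representatives fall into line and the claim, hence the theorem, follows.
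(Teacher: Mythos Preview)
Your reduction via \eqref{equ:2ndFac} and your treatment of the base points $0,1,\infty$ and of the orbit $\langle\mu\rangle_6$ are correct and essentially coincide with the paper's cases (1)--(3): the $S_3$ substitutions $t\mapsto 1-t$ and $t\mapsto 1/t$ do preserve the $\Z$-span of $d\log t$ and $d\log(1-t)$, and Chen's path-splitting then lands you at $\int_0^\mu$ and $\int_0^1$, both manifestly in $\CMZV^N$.

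The gap is in your handling of the orbits $\langle -\mu\rangle_6$ for \emph{odd} $N$. After the $S_3$ reduction and path-splitting you are left with $\int_0^{-\mu}[\tx_0,\tx_1]_m$, and this is only a level-$2N$ CMZV: rescaling $t\mapsto -\mu t$ sends $\tx_1$ to $\tx_{-\mu}$, and $-\mu\notin\mu_N$ when $N$ is odd. Your parenthetical appeal to a ``distribution relation'' does not close this gap. In depth one the identity $\Li_k(\mu)+\Li_k(-\mu)=2^{1-k}\Li_k(\mu^2)$ indeed works because $\mu^2\in\mu_N$; but already in depth two the doubling relation
\[
\Li_{\bfk}(\mu^2,1,\dots,1)=2^{|\bfk|-d}\sum_{\eps_1,\dots,\eps_d\in\{\pm1\}}\Li_{\bfk}(\eps_1\mu,\eps_2,\dots,\eps_d)
\]
introduces the arguments $\eps_j=-1$ in the \emph{inner} slots, so you trade one level-$2N$ value for several others and never isolate $\Li_{\bfk}(-\mu,1,\dots,1)$ inside $\CMZV^N$.

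The paper's proof avoids this by abandoning the $S_3$-action for cases (4)--(6) and using $\mu$-dependent M\"obius transformations such as $t\mapsto\dfrac{t-\mu}{1-\mu t}$ and $t\mapsto\dfrac{(1+\mu)(1-t)}{1-\mu t}$. These do \emph{not} preserve the span of $\tx_0,\tx_1$; instead they send $\tx_0,\tx_1$ to $\Z$-combinations of $\tx_1,\tx_\mu,\tx_{\bmu}$, whose poles are $N$th roots of unity, so the resulting $\int_0^1$ is visibly a level-$N$ CMZV. That is the missing idea in your argument.
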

\begin{proof}
By \eqref{equ:2ndFac} it suffices to prove the claim that for all weight $w$ and $N$-unital function $U(x)$ with $U(0)\in\calC^N$,
\begin{equation*}
\rint{0}{U(0)}{-6.2ex}{3.6ex} [d\log t, d\log(1-t)]_w\in \CMZV_w^N.
\end{equation*}
If $U(0)=0$ or $1$ then the claim is trivial. If $U(0)=\infty$, then the claim follows immediately from \eqref{equ:RenormMPLinfty}.
This complete the proof of $N=1$ case.

Assume $N\ge 2$. For $U(0)\ne 0,1,\infty$ since
$\mu_j^{-1}$ is still an $N$th root of unity, for simplicity,
we may just assume $\mu=\mu_j$ for some $j=1,\dots, N-1$.

\begin{enumerate}
 \item[\upshape{(1)}]
$U(0)=\mu$. The claim follows quickly from the substitution $t=\mu t$.

 \item[\upshape{(2)}] \label{U(0)=1-u}
$U(0)=1-\mu$. Applying Lemma~\ref{lem:chen's}(i) again we get
\begin{equation*}
\rint{0}{1-\mu}{-5.9ex}{3.3ex} [\tx_0, \tx_1]_w
=\sum_{j=0}^w \rint{1}{1-\mu}{-5.9ex}{3.3ex} [\tx_0, \tx_1]_j \cdot \rint{0}{1}{-3.3ex}{1.3ex} [\tx_0, \tx_1]_{w-j} .
\end{equation*}
Then by change of variables $t\to 1-t$ (so that $\tx_0 \leftrightarrow \tx_1$) in the first factor we reduce it to Case 1)
by Lemma~\ref{lem:chen's}(ii) (notice the order of integration is reversed by this change of variables).

 \item[\upshape{(3)}]
$U(0)=\frac1{1-\mu}$. Applying Lemma~\ref{lem:chen's}(i) again we get
\begin{equation*}
\rint{0}{\frac1{1-\mu}}{-5.95ex}{3.55ex} [\tx_0, \tx_1]_w
=\sum_{j=0}^w \rint{1}{\frac1{1-\mu}}{-5.95ex}{3.55ex} [\tx_0, \tx_1]_j \cdot \rint{0}{1}{-3.3ex}{1.3ex} [\tx_0, \tx_1]_{w-j} .
\end{equation*}
Then by change of variable $t\to 1/t$ (so that $\tx_0\to -\tx_0, \tx_1\to \tx_1-\tx_0$)
in the first factor we get
\begin{equation*}
\rint{1}{\frac1{1-\mu}}{-5.95ex}{3.55ex} [\tx_0, \tx_1]_j = \rint{1}{1-\mu}{-5.85ex}{3.25ex} [-\tx_0,\tx_1-\tx_0]_j\in \CMZV_j^N
\end{equation*}
by Case (2) and Lemma~\ref{lem:chen's}(ii).

 \item[\upshape{(4)}]
$U(0)=-\mu\ne -1$. Put $\tx_\mu=dt/(\mu-t)$. Then
\begin{equation*}
\rint{0}{-\mu}{-4.95ex}{2.45ex} [\tx_0,\tx_1]_w
=\sum_{j=0}^w \rint{1}{-\mu}{-4.9ex}{2.4ex} [\tx_0, \tx_1]_j \cdot \rint{0}{1}{-3.3ex}{1.3ex} [\tx_0, \tx_1]_{w-j} .
\end{equation*}
For the first factor, by substitution $t=\frac{t-\mu}{1-\mu t}$ we see that $\tx_0\to \tx_\mu- \tx_\bmu$
and $\tx_1\to \tx_1- \tx_\bmu$. Thus by Lemma~\ref{lem:chen's}(ii)
\begin{align*}
\rint{1}{-\mu}{-4.95ex}{2.45ex} [\tx_0,\tx_1]_w= &\, \rint{1}{0}{-3.35ex}{1.35ex} [\tx_\mu- \tx_\bmu, \tx_1- \tx_\bmu]_w \\
= &\, (-1)^w\rint{0}{1}{-3.3ex}{1.3ex} \tau[\tx_\mu- \tx_\bmu, \tx_1- \tx_\bmu]_w\in \CMZV_w^N,
\end{align*}
where $\tau$ means the order of the iterated integral is reversed.

 \item[\upshape{(5)}] \label{U(0)=1+u}
$U(0)=1+\mu$. By substitution $t=\frac{(1+\mu)(1-t)}{1-\mu t}$ we see that $\tx_0\to \tx_1- \tx_\bmu$
and $\tx_1\to \tx_\mu- \tx_\bmu$. Thus by Lemma~\ref{lem:chen's}(ii)
\begin{align*}
\rint{0}{1+\mu}{-5.8ex}{3.4ex} [\tx_0,\tx_1]_w= &\,\rint{1}{0}{-3.3ex}{1.3ex} [\tx_1- \tx_\bmu, \tx_\mu- \tx_\bmu]_w\\
= &\,(-1)^w\rint{0}{1}{-3.3ex}{1.3ex} \tau[\tx_1- \tx_\bmu, \tx_\mu-\tx_\bmu]_w\in \CMZV_w^N.
\end{align*}

 \item[\upshape{(6)}]
$U(0)=\frac1{1+\mu}$. Applying Lemma~\ref{lem:chen's}(i) again we get
\begin{equation*}
\rint{0}{\frac1{1+\mu}}{-5.9ex}{3.5ex} [\tx_0, \tx_1]_w
=\sum_{j=0}^w \rint{1}{\frac1{1+\mu}}{-5.9ex}{3.5ex} [\tx_0, \tx_1]_j \cdot \rint{0}{1}{-3.3ex}{1.3ex} [\tx_0, \tx_1]_{w-j} .
\end{equation*}
Then by change of variable $t\to 1/t$ (so that $\tx_0\to -\tx_0, \tx_1\to \tx_1-\tx_0$),
in the first factor we get
\begin{equation*}
\rint{1}{\frac1{1+\mu}}{-5.9ex}{3.5ex} [\tx_0, \tx_1]_j = \rint{1}{1+\mu}{-5.9ex}{3.4ex} [-\tx_0,\tx_1-\tx_0]_j \in \CMZV_j^N
\end{equation*}
by Case (5) and Lemma~\ref{lem:chen's}(ii).
\end{enumerate}
This completes the proof of the theorem.
\end{proof}

\begin{re}
In Remark~\ref{re-5Unital} we have found $f(x),\,g(x) \in\vU_5 $ such that
$\langle f(0)\rangle_6\cap \calC^5=\langle g(0)\rangle_6\cap \calC^5=\emptyset$.
Au's computation in \cite[Table 1]{Au2022} does
contain the three values $f(0), 1-f(0), f(0)/(f(0)-1)$ but not the others, nor any values in $\langle g(0)\rangle_6$.
These are essentially the only exceptions one need to check to prove Theorem~\ref{thm-NUnitalU(0)} for $N=5$.
\end{re}

\subsection{Evaluation of Ap\'ery-like sums via $N$-unital functions}
As Ap\'ery showed us that central binomial series such as \eqref{equ:AperySeries} can play very important roles in number theory. Many works on these have appeared in recent years even though mathematicians studied them much earlier. In this section, using a key observation of Au \cite{Au2020} relating such series to CMZVs via unital functions we will present, for each level $N\ge 2$, a family of such series.

Fix $N\ge 2$ and $1\le j\le N-1$. Put $\theta=\theta_j^{(N)}:=2j\pi/N$, $\mu=\mu_j^{(N)}:=e^{i\theta}$, and
$a=\mu+\bmu=2\cos\theta$. Define
\begin{align*}
& f_n=f_n^{(N,j)}:=\int_0^1 \frac1x \left(\frac{(2-a)x}{x^2-a x+1}\right)^n \, dx , \\
& g_n=g_n^{(N,j)}:=\int_0^1 \left(\frac{(2-a)x}{x^2-a x+1}\right)^n \, dx.
\end{align*}
By direct computation we see that
\begin{align*}
f_1=& \frac{(2-a)}{2\sin\theta} (\pi-\theta)=(\pi-\theta)\tan \frac{\theta}2 , \quad
f_2=\frac{2-a}{4\sin^2 \theta}(af_1+2-a),\\
g_0=1, \quad g_1=& (2-a) \log\Big(2\sin \frac{\theta}{2} \Big)+\frac{a(2-a)}{4\sin\theta} (\pi-\theta), \quad
g_2= \frac1{a}(2f_2-2+a).
\end{align*}
The only exception is when $N=2$ and $j=1$ in which case we can compute directly that $g_1=4\log 2-2$.
Assuming $n\ge 1$, by integration by parts we have
\begin{align*}
 f_n=&\, \left.\frac{(2-a)^n x^n}{n(x^2-ax+1)^n}\right|_0^1+\int_0^1 \frac{(2-a)^n x^n(2x-a)}{(x^2-ax+1)^{n+1}} \, dx \\
=&\, \frac{1}{n} +\frac{2}{2-a} g_{n+1}-\frac{a}{2-a} f_{n+1}.
\end{align*}
Similarly,
\begin{align*}
 g_n=&\, \left.\frac{(2-a)^n x^{n+1}}{(n+1)(x^2-ax+1)^n}\right|_0^1+\frac{n}{n+1}\int_0^1 \frac{(2-a)^n x^{n+1}(2x-a)}{(x^2-ax+1)^{n+1}} \, dx \\
=&\, \frac{1}{n+1} +\frac{n}{n+1}\int_0^1 \frac{ (2-a)^n[ 2\cdot(x^{n+2}-ax^{n+1}+x^n)+a x^{n+1}-2 x^n]}{(x^2-ax+1)^{n+1}}\, dx\\
=&\, \frac{1}{n+1} +\frac{n}{n+1}\left(2g_n+\frac{a}{2-a}g_{n+1}-\frac{2}{2-a} f_{n+1}\right).
\end{align*}
Therefore we get the second order recurrence for both $\{f_n\}$ and $\{g_n\}$: for all $n\ge 2$
\begin{align*}
(a+2)nf_{n+1}-a(2n-1) f_{n}-(2-a)(n-1)f_{n-1}=0, \\
(a+2)n(n-1)g_{n+1}-a(2n-1)(n-1) g_{n}-(2-a)n(n-2)g_{n-1}=& 2-a.
\end{align*}

\begin{thm} \label{thm:sumGn}
Let $N\ge 2$, $1\le j<N$ and $\theta=2j\pi/N$. Set
\begin{equation*}
F_1=\left\{
 \begin{array}{ll}
 1 , & \hbox{if $N=2$;} \\
 (\pi-\theta)\csc \theta,\quad\ & \hbox{if $N\ge 3$,}
 \end{array}
 \right.
\quad
F_2=\left\{
 \begin{array}{ll}
 4/3 , & \hbox{if $N=2$;} \\
 (\cos\theta F_1+1) \csc^2\theta ,\quad\ & \hbox{if $N\ge 3$,}
 \end{array}
 \right.
\end{equation*}
and
\begin{equation*}
n \sin^2\theta F_{n+1}-(2n-1) \cos\theta F_{n}-(n-1) F_{n-1}=0 \quad \forall n\ge2.
\end{equation*}
Then we have
\begin{align*}
\sum_{n\ge1} \frac{H_{n-1}(\bfkk)}{n^l} (1-\cos\theta)^n F_n \in \CMZV_{|\bfkk|+l+1}^N.
 \end{align*}
In particular, when $l=1$ and $2$ we have
\begin{align*}
\sum_{n\ge1} \frac{(1-\cos\theta)^n F_n}{n} =&\, 2\zeta(2)-\Li_2(\mu)-\Li_2(\bmu)\in \CMZV_{2}^N, \\
\sum_{n\ge1} \frac{(1-\cos\theta)^n F_n}{n^2}=&\,\sum_{\eta=\mu,\bmu} \Big(2\Li_{2,1}(\eta,\bar\eta)-\Li_{2,1}(\eta,1)-\Li_{2,1}(\eta,\bar\eta^2)\Big)\\
&\,+2\zeta(3)-\Li_3(\mu)-\Li_3(\bmu)\in \CMZV_{3}^N.
\end{align*}
\end{thm}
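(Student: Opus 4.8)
The plan is to recognize the stated sum as exactly the renormalized integral governed by Theorem~\ref{thm-NUnitalU(0)}, applied to the single rational function
$U(x)=\tfrac{(2-a)x}{x^2-ax+1}$, and then to invoke that theorem. First I would check that $U$ is $N$-unital with $U(0)=0$. Since $\mu\bmu=1$ and $\mu+\bmu=a$, we have $x^2-ax+1=(x-\mu)(x-\bmu)$, so $U(x)=\tfrac{(2-a)x}{(x-\mu)(x-\bmu)}$, and a one-line computation gives
\[
1-U(x)=\frac{x^2-2x+1}{x^2-ax+1}=\frac{(1-x)^2}{(x-\mu)(x-\bmu)}.
\]
Thus every zero and pole of $U$ and of $1-U$ lies in $\{0,1,\mu,\bmu\}$, each of which is $0$, $1$, or an $N$th root of unity; hence $U\in\vU_N$ with $U(0)=0\in\calC^N$ by Proposition~\ref{prop:R0}.

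Next I would establish the integral identity $f_n=(1-\cos\theta)^nF_n$ for all $n\ge1$, where $f_n=\int_0^1\tfrac1x U(x)^n\,dx$ as in the text. Write $c=1-\cos\theta$ and note $\sin^2\theta=c(1+\cos\theta)$. Substituting $f_n=c^nF_n$ into the recurrence $(a+2)nf_{n+1}-a(2n-1)f_n-(2-a)(n-1)f_{n-1}=0$ derived above (with $a=2\cos\theta$, $a+2=2(1+\cos\theta)$, $2-a=2c$) and dividing by $c^n$ transforms it \emph{verbatim} into the stated recurrence $n\sin^2\theta\,F_{n+1}-(2n-1)\cos\theta\,F_n-(n-1)F_{n-1}=0$; since the computed values satisfy $f_1=(\pi-\theta)\tan\tfrac\theta2=cF_1$ and $f_2=c^2F_2$ for $N\ge3$, the identity follows by induction. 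For $N=2$ (so $\theta=\pi$, $a=-2$) the $F$-recurrence degenerates because $\sin\theta=0$; there I would instead read off $f_n=4^n/\bigl(n\binom{2n}{n}\bigr)$ directly from Au's identity~\eqref{equ:AuN=2}, the initial values being fixed precisely so that $(1-\cos\theta)^nF_n=f_n$ persists.

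The combinatorial heart is the generating-function identity
\[
\sum_{n\ge1}\frac{H_{n-1}(\bfkk)}{n^l}\,y^n=\Li_{(l,\bfkk)}(y),
\]
the single-variable multiple polylogarithm of the composition $(l,k_1,\dots,k_d)$, of weight $l+|\bfkk|$. Because $0\le U(x)\le1$ on $[0,1]$ with $U(1)=1$, and $f_n=O(n^{-1/2})$ by Laplace's method (equivalently Stirling, as noted in the text), all summands are nonnegative and Tonelli's theorem justifies the interchange
\begin{align*}
\sum_{n\ge1}\frac{H_{n-1}(\bfkk)}{n^l}(1-\cos\theta)^nF_n
&=\sum_{n\ge1}\frac{H_{n-1}(\bfkk)}{n^l}\int_0^1\frac{U(x)^n}{x}\,dx
=\int_0^1\frac1x\,\Li_{(l,\bfkk)}(U(x))\,dx.
\end{align*}
Since $U(0)=0$, the integrand is bounded as $x\to0$ and has at worst an integrable logarithmic singularity as $x\to1$, so this honest integral equals its renormalized value $\rint{0}{1}{-3.3ex}{1.3ex}\tfrac1x\Li_{(l,\bfkk)}(U(x))\,dx$. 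Applying Theorem~\ref{thm-NUnitalU(0)} to $U\in\vU_N$ and the composition $(l,\bfkk)$ then places the value in $\CMZV^N_{|(l,\bfkk)|+1}=\CMZV^N_{|\bfkk|+l+1}$, which is the assertion.

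For the two explicit cases (where $\bfkk=\emptyset$) I would compute directly. For $l=1$, using $\Li_1(u)=-\log(1-u)$ and the factorization above,
\begin{align*}
\int_0^1\frac{\Li_1(U(x))}{x}\,dx
&=\int_0^1\frac{\log(1-\mu x)+\log(1-\bmu x)-2\log(1-x)}{x}\,dx\\
&=2\zeta(2)-\Li_2(\mu)-\Li_2(\bmu),
\end{align*}
via $\int_0^1 x^{-1}\log(1-cx)\,dx=-\Li_2(c)$. For $l=2$, I would expand $\Li_2(U(x))=\int_0^x d\log U\,\bigl(-d\log(1-U)\bigr)$ through \eqref{equ-singleMPLitInt}, insert the partial fractions
\[
d\log U=\frac{dx}{x}-\frac{dx}{x-\mu}-\frac{dx}{x-\bmu},\qquad
-d\log(1-U)=\frac{2\,dx}{1-x}+\frac{dx}{x-\mu}+\frac{dx}{x-\bmu},
\]
together with the outer $\tfrac{dx}{x}$, and expand the resulting weight-$3$ iterated integral over $[0,1]$ into level-$N$ CMZVs, collecting the pole contributions at $\mu,\bmu$ to reach
\[
\sum_{\eta\in\{\mu,\bmu\}}\bigl(2\Li_{2,1}(\eta,\bar\eta)-\Li_{2,1}(\eta,1)-\Li_{2,1}(\eta,\bar\eta^2)\bigr)+2\zeta(3)-\Li_3(\mu)-\Li_3(\bmu).
\]
The main obstacle is precisely this last explicit reduction: the membership statement itself follows cleanly from the already-proved Theorem~\ref{thm-NUnitalU(0)} once Steps~1--3 are in place, so the genuine labor is the disciplined iterated-integral bookkeeping for $l=2$. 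The double pole arising from $(1-x)^2$ in $1-U$ and the change-of-variable shuffles are what generate the non-obvious arguments $\bar\eta^2$ and the exact rational coefficients, and pinning these down (rather than merely asserting membership in $\CMZV_3^N$) is the delicate part; the secondary subtlety is the degenerate $N=2$ normalization in Step~2.
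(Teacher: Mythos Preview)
Your proposal is correct and follows essentially the same route as the paper: identify $U(x)=\tfrac{(2-a)x}{(x-\mu)(x-\bmu)}\in\vU_N$, rewrite the sum as $\int_0^1\tfrac1x\Li_{l,\bfkk}(U(x))\,dx$ via the generating-function identity, and conclude membership in $\CMZV^N_{|\bfkk|+l+1}$ from the $N$-unital structure. The only cosmetic difference is that you invoke Theorem~\ref{thm-NUnitalU(0)} as a black box, whereas the paper (since $U(0)=0$ makes the second factor in \eqref{equ:2ndFac} trivial) simply writes out $d\log U=\tx_0+\tx_\mu+\tx_{\bmu}$ and $-d\log(1-U)=2\tx_1-\tx_\mu-\tx_{\bmu}$ and reads off the iterated integral directly; your extra care with Tonelli, the recurrence verification for $F_n$, and the degenerate $N=2$ case are details the paper omits.
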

\begin{proof}
It is easy to see that $f_n=(1-\cos\theta)^n F_n$. Put
$$
U(x):=\frac{(2-a)x}{x^2-a x+1}=\frac{(2-a)x}{(x-\mu)(x-\bmu)}, \quad 1-U(x)=\frac{(x-1)^2}{(x-\mu)(x-\bmu)}\in \vU_N.
$$
We see that
\begin{align*}
\sum_{n\ge1} \frac{H_{n-1}(\bfk)}{n^l} f_n
=& \int_0^1 \frac1x\sum_{n\ge1} \frac{H_{n-1}(\bfk)}{n^l} \big(U(x)\big)^n \, dx , \\
=& \int_0^1 \frac1x \Li_{l,\bfk}\big(U(x)\big) \, dx\\
=& \int_0^1 \frac1x \left(\int_0^{U(x)} [\tx_0,\tx_1]_{|\bfk|+l} \, dt \right)\,dx\\
=& \int_0^1 \tx_0 [ \tx_0+\tx_{\mu}+\tx_{\bmu},2\tx_1-\tx_{\mu}-\tx_{\bmu}]_{|\bfk|+l}\in \CMZV_{|\bfk|+l}^N
\end{align*}
since
\begin{align*}
d\log U(t)=&\, d\log t-d\log(\mu-t)-d\log(\bmu-t),\\
d\log (1-U(t))=&\, 2d\log(1-t)-d\log(\mu-t)-d\log(\bmu-t).
\end{align*}
In particular, when $\bfk=\emptyset$ we have
\begin{align*}
\sum_{n\ge1} \frac{f_n}{n^l}=&\int_0^1 \tx_0 (\tx_0+\tx_{\mu}+\tx_{\bmu})^{l-1}(2\tx_1-\tx_{\mu}-\tx_{\bmu}) .
\end{align*}
This implies quickly the two special cases when $l=1$ and $l=2$.
\end{proof}

\begin{cor}
For all $l\ge 2$ we have
\begin{align*}
\sum_{n\ge1} \frac{4^n H_{n-1}(\bfkk)}{n^l\binom{2n}{n}} \in  \CMZV_{|\bfkk|+l+1}^2,\quad
\sum_{n\ge1} \frac{\pi\binom{2n}{n}}{4^n\cdot (2n+1)^l}  \in  \CMZV_{l+1}^4.
 \end{align*}
\end{cor}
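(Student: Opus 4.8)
The plan is to read both memberships off Theorem~\ref{thm:sumGn} and the iterated-integral construction behind it, specialized to the two levels at which central binomial coefficients appear.

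For the inverse binomial (level $2$) sum I would take $N=2$, $j=1$, so that $\theta=\pi$, $\mu=\bmu=-1$, $a=2\cos\theta=-2$, $2-a=4$ and $1-\cos\theta=2$; the attached unital function is
\[
U(x)=\frac{(2-a)x}{x^2-ax+1}=\frac{4x}{(1+x)^2}\in\vU_2,\qquad 1-U(x)=\frac{(1-x)^2}{(1+x)^2}.
\]
The decisive input is Au's identity \eqref{equ:AuN=2}, which yields
\[
f_n=\int_0^1\frac1x\Big(\frac{4x}{(1+x)^2}\Big)^n\,dx=4^n\int_0^1\frac{x^{n-1}}{(1+x)^{2n}}\,dx=\frac{4^n}{n\binom{2n}{n}}.
\]
Thus $\dfrac{4^nH_{n-1}(\bfkk)}{n^{l}\binom{2n}{n}}=\dfrac{H_{n-1}(\bfkk)}{n^{l-1}}\,f_n$, and Theorem~\ref{thm:sumGn} with $l$ replaced by $l-1$ (legitimate since $l\ge2$) places the first sum in $\CMZV_{|\bfkk|+l}^2$, recovering the level-$2$ bound recorded earlier for this very sum. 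This half needs nothing beyond \eqref{equ:AuN=2}.

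For the binomial (level $4$) sum the fresh ingredient is the classical expansion $\arcsin x=\sum_{n\ge0}\frac{\binom{2n}{n}}{4^n(2n+1)}\,x^{2n+1}$. Setting $A_1:=\arcsin$ and $A_l(x):=\int_0^x A_{l-1}(t)\,\frac{dt}{t}$, an easy induction gives $A_l(1)=\sum_{n\ge0}\frac{\binom{2n}{n}}{4^n(2n+1)^l}$, so the target sum equals $\pi\bigl(A_l(1)-1\bigr)$. To see that $A_l(1)$ is a level-$4$ CMZV I would substitute the $4$-unital function $t=2s/(1+s^2)\in\vU_4$ (precisely the function attached to $N=4$, $j=1$ above), under which
\[
\frac{dt}{\sqrt{1-t^2}}=\frac{2\,ds}{1+s^2},\qquad \frac{dt}{t}=d\log s-d\log(1+s^2),
\]
while the endpoints $t:0\to1$ become $s:0\to1$. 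This turns $A_l(1)$ into a convergent iterated integral from $0$ to $1$ in the three forms $\frac{ds}{s},\frac{ds}{s-i},\frac{ds}{s+i}$, whose poles sit only at $0$ and at the fourth roots of unity $\pm i$ (convergence at the origin is guaranteed because the innermost $\arcsin$-form $\frac{2\,ds}{1+s^2}$ vanishes there). Hence $A_l(1)\in\CMZV_l^4$, and since $\pi=2\arcsin(1)=2A_1(1)\in\CMZV_1^4$ and level-$4$ CMZVs form an algebra graded by weight, we get $\pi A_l(1)\in\CMZV_{l+1}^4$ and therefore the target sum lies in $\CMZV_{l+1}^4$.

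The hard part is the level-$4$ sum, and specifically the verification that the $\arcsin$-integrand pulls back under $t=2s/(1+s^2)$ to forms supported only at fourth roots of unity, so that one genuinely lands in level $4$ rather than some larger level; one must also confirm the branch and endpoint matching of the substitution, and rewrite the real number $A_l(1)$—a priori a $\Q(i)$-combination of iterated integrals—as an honest $\Q$-linear combination of level-$4$ CMZVs using closure under complex conjugation (as $\bar i=-i$ is again a fourth root of unity). The remaining points, namely the convergence bookkeeping and the harmless lower-weight constant $\pi$ produced by the $n=0$ term (absorbed once the $\CMZV$-spaces are read as filtered by weight), are routine.
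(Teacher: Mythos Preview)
Your level-$2$ half is exactly the paper's argument: apply Theorem~\ref{thm:sumGn} at $N=2$, $j=1$ and identify $f_n=4^n/\big(n\binom{2n}{n}\big)$ via~\eqref{equ:AuN=2}. You are also right that this yields weight $|\bfkk|+l$, matching the earlier statement of Au's theorem rather than the $|\bfkk|+l+1$ printed in the corollary.

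For the level-$4$ half you take a different route from the paper. The paper stays inside Theorem~\ref{thm:sumGn}: with $N=4$, $j=1$ it evaluates $f_n$ through the beta integral~\eqref{equ:fn}, obtaining $f_{2k}=4^k/\big(2k\binom{2k}{k}\big)$ and $f_{2k+1}=\pi\binom{2k}{k}/(2\cdot 4^k)$; since $\sum f_n/n^l\in\CMZV_{l+1}^4$ and the even-$n$ contribution is already covered by the first claim, the odd-$n$ piece (the sum from $k\ge 0$) drops out by subtraction. Your direct $\arcsin$ approach is instead the content of the remark following the corollary (cf.~\eqref{equ:further}).

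That alternative can be made to work, but your justification has a real gap. You claim $A_l(1)\in\CMZV_l^4$ and $\pi\in\CMZV_1^4$ via ``closure under complex conjugation''. Conjugation closure only gives $\mathrm{Re}(L)\in\CMZV_w^4$ for $L\in\CMZV_w^4$; it does \emph{not} put a real number of the form $iL$ (with $L\in\CMZV_w^4$ purely imaginary) back into $\CMZV_w^4$. Concretely $\CMZV_1^4=\Q\log 2+\Q(i\pi)$, so $\pi\notin\CMZV_1^4$. The correct bookkeeping is to track the single factor of $i$ coming from the innermost form: under $t=2s/(1+s^2)$ one has $\tfrac{dt}{t}=\tx_0+\tx_i+\tx_{-i}$ with rational coefficients, but $\tfrac{dt}{\sqrt{1-t^2}}=i(\tx_i-\tx_{-i})$. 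Hence $A_l(1)\in i\,\CMZV_l^4$ and $\pi=2A_1(1)\in i\,\CMZV_1^4$, and the product $\pi A_l(1)\in i^2\,\CMZV_{l+1}^4=\CMZV_{l+1}^4$. For the same reason your ``filtered weight'' absorption of the $n=0$ term cannot work ($\pi$ is not a $\Q$-combination of level-$4$ CMZVs of any weight); note that the paper's own proof in fact produces the sum from $k\ge0$.
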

Note that if $\bfk=\emptyset$ the first sum has been studied by many authors previously by a variety of methods.
One can find its briefly history
in \cite[\S 3]{WangXu2021} and even a MAPLE package for its computation. One can also find the evaluation
of many other similar Ap\'ery-like sums in \cite{Au2020}.
\begin{proof}
The first claim, as treated in \cite{Au2020}, follows from \eqref{equ:AuN=2} after taking $N=2,j=1$ in the theorem.

For the second sum, set $N=4, j=1$, $u=i$ and $a=0$. Using the Gamma integral we may compute $f_n$ as follows:
\begin{align}
f_n =\int_0^1 \frac1x\left(\frac{2x}{x^2+1}\right)^n \, dx
= & 2^n \int_0^{\pi/4} \frac{\tan^{n-1}t}{\sec^{2n-2}t} \, dt \notag\\
= & 2^{n-1} \int_0^{\pi/2} \sin^{n-1}t\cdot \cos^{n-1} t \, dt
=2^{n-2} \frac{\gG(\frac{n}{2})\gG(\frac{n}{2}) }{\gG(n)}.   \label{equ:fn}
\end{align}
Hence
\begin{align*}
 f_{2k} =&\, 2^{2k-2}\frac{(k-1)!^2}{(2k-1)!}=\frac{4^k}{2k{\binom{2k}{k}}} ,\quad
 f_{2k+1} = \frac{(2k-1)!!^2}{2(2k)!}\pi = \frac{\pi\binom{2k}{k}}{2\cdot 4^k} .
\end{align*}
We see that
\begin{align*}
\sum_{n\ge1} \frac{f_n}{n^l}=& \sum_{k\ge1} \frac{4^k }{(2k)^{l+1}{\binom{2k}{k}}}
+ \sum_{k\ge 0} \frac{\pi\binom{2k}{k}}{2(2k+1)^l \cdot 4^k}  \in \CMZV_{l+1}^4.
\end{align*}
Thus the second claim follows immediately from the first.
\end{proof}

\begin{re} One can show that
\begin{equation} \label{equ:further}
\sum_{n\ge1} \frac{\binom{2n}{n}}{4^n\cdot (2n+1)^l}  \in i \CMZV_{|\bfk|+l}^4
\end{equation}
by the following identity
\begin{equation*}
 \sum_{n\ge1} \frac{\binom{2n}{n}}{4^n\cdot (2n+1)^l}  =
\sum_{n\ge1} \frac{(-1)^n \binom{-1/2}{n}}{(2n+1)^l} = \int_0^1 \left(\frac{dt}{t}\right)^{l-1} \frac{dt}{\sqrt{1-t^2} }.
\end{equation*}
With the substitution $t\to 2t/(1+t^2)$, we get $\tx_0\to \tx_0+\tx_i+\tx_{-i}$ and $ \frac{dt}{\sqrt{1-t^2}} \to
\frac{2dt}{1+t^2}=i(\tx_i-\tx_{-i}).$ Thus \eqref{equ:further} holds.
\end{re}

\begin{ex}
By taking $N=2,j=1$ and using \eqref{equ:AuN=2}, as in \cite{Au2020}, we can find
\begin{equation}\label{equ:N2}
\sum_{k\ge1} \frac{4^k}{k^{l+1}\binom{2k}{k}} =
\left\{
 \begin{array}{ll}
 \displaystyle 3 \zeta(2) , & \hbox{if $l=1$;} \\
 \displaystyle 6\zeta(2)\log 2 -\tfrac7{2}\zeta(3), \quad \ & \hbox{if $l=2$.}
 \end{array}
\right.
\end{equation}
This is consistent with \cite[Example 8]{CoppoCa2015}, \cite[Theorem 3.1]{Sprugnoli2006}, and \cite[Example 3.1]{WangXu2021}.
\end{ex}

\begin{ex}\label{cor:N=4}
Take $N=4$. If $l=1$, by Theorem~\ref{thm:sumGn} and a straight-forward calculation we get
\begin{align}
\sum_{n\ge1} \frac{f_n }{n} =& 2\zeta(2)-\Li_2(i)-\Li_2(-i)=\tfrac94\zeta(2), \label{N=4l=1}\\
\sum_{n\ge1} \frac{f_n}{n^2}=&2\zeta(3)-\Li_3(i)-\Li_3(-i)+\sum_{\eta=\pm i} \Big(2\Li_{2,1}(\eta,\bar\eta)-\Li_{2,1}(\eta,1)-\Li_{2,1}(\eta,\bar\eta^2)\Big) \notag \\
=& \tfrac{21}{16}\zeta(3)+\tfrac34\zeta(2)\log 2 + M(\bar2,\bar1) \notag
\end{align}
by manipulating the summation indices suitably, where
$$
M(\bar2,\bar1)=\sum_{m>n>0} \frac{4(-1)^{m+n}}{(2m-1)^2(2n-1)}=\tfrac32\zeta(2)\log 2-\tfrac74\zeta(3)
$$
is a double mixed value studied in \cite{XuZhao2020a}.
To show the second equality in \eqref{N=4l=1} we have
\begin{equation*}
\Li_2(i)+\Li_2(-i)=\sum_{m\ge 1} \frac{i^m+(-i)^m}{m^2}=\sum_{m\ge 1} \frac{2(-1)^m}{(2m)^2}=\tfrac12\zeta(\bar2)=-\tfrac14\zeta(2),
\end{equation*}
where the evaluation of the alternating double zeta value $\zeta(\bar2):=\sum_{m\ge1}\frac{(-1)^m}{m^2}$ can be found easily by the well-known relation $\zeta(2)+\zeta(\bar2)=\tfrac12 \zeta(2)$. By \eqref{equ:N2} this implies
\begin{equation*}
 \sum_{k\ge 0} \frac{\binom{2k}{k}}{(2k+1) \cdot 4^k}  = \frac{\pi}{2},\quad
 \sum_{k\ge 0} \frac{\pi \binom{2k}{k}}{2(2k+1)^2 \cdot 4^k}= \tfrac74\zeta(3)+M(\bar2,\bar1).
\end{equation*}
Hence
\begin{equation*}
 \sum_{k\ge 0} \frac{\binom{2k}{k}}{(2k+1)^2 \cdot 4^k} =\frac{3}{\pi}\zeta(2)\log 2=\frac{\pi\log 2}{2}.
\end{equation*}
\end{ex}

\section{Ap\'ery-like sums via Clausen function}
\subsection{Some conjectures of Z.-W.\ Sun}
Since late 1980s Z.-W.\ Sun has discovered many beautiful and highly nontrivial identities of infinite series, often involving interesting objects such as Fibonacci numbers, Bernoulli numbers, Euler numbers, etc. In \cite{Sun2015,Sun2021,Sun2022} he proposed a few families of conjectures of such identities by generalizing the Lucas numbers to Lucas sequences. We strongly recommend his recent book \cite{Sun2021} even though it is in Chinese because the readers can still understand most of the 820 conjectures even if they do not know a single Chinese character.

Let $A$ and $B$ be integers. Sun defines the more general \emph{Lucas sequence} $\{v_n(A,B)\}_{n\ge 0}$ as follows (see \cite{Sun2009}):
\begin{align*}
& v_0(A,B):=2,\quad v_1(A,B):=A,\\
& v_{n+1}(A,B)=Av_{n}(A,B)-Bv_{n-1}(A,B) \quad \forall n\ge 1.
\end{align*}
The characteristic equation $x^2-Ax+B=0$ of the sequence $\{v_n\}_{n\geq 0}$ has two roots
\[\alpha=\frac{A+\sqrt{A^2-4B}}{2}\quad \text{and}\quad \beta=\frac{A-\sqrt{A^2-4B}}{2}.\]
It is well known that for any integer $n\ge 0$ we have
\begin{align*}
v_n(A,B)=\alpha^n+\beta^n.
\end{align*}
For examples, setting $\phi=\frac{\sqrt{5}+1}{2}$ we have
\begin{align*}
&v_n(3,1)=L_{2n}=\phi^{2n}+\phi^{-2n},\\
&v_n(4,1)=(2-\sqrt{3})^n+(2+\sqrt{3})^n,\\
&v_n(4,2)=(2-\sqrt{2})^n+(2+\sqrt{2})^n,\\
&v_n(5,5)=(\sqrt{5}\phi)^n+(\sqrt{5}\phi^{-1})^n.
\end{align*}

Define $H_0:=0$ and the \emph{classical harmonic numbers}
\[
H_n:=H_n(1)=\sum_{k=1}^n \frac{1}{k}\quad\forall n\ge 1.
\]
In this section, we will prove the following results.
\begin{thm}\label{thm-sun's-conj} We have
\begin{align} \label{equ:10.62}
&\sum_{n=1}^\infty \frac{L_{2n}}{n^2\binom{2n}{n}}\left(H_{2n}-H_{n-1}\right)=\frac{41\ze(3)+4\pi^2\log(\phi)}{25},\\
&\sum_{n=1}^\infty \frac{v_n(5,5)}{n^2\binom{2n}{n}}\left(H_{2n}-H_{n-1}\right)=\frac{124\ze(3)+\pi^2\log\left(5^5\phi^6\right)}{50},\label{equ:10.63}\\
&\sum_{n=1}^\infty \frac{v_n(4,1)}{n^2\binom{2n}{n}}\left(H_{2n}-H_{n-1}\right)=\frac{23\ze(3)+2\pi^2\log(2+\sqrt{3})}{12},\label{equ:2022a}\\
&\sum_{n=1}^\infty \frac{v_n(4,2)}{n^2\binom{2n}{n}}\left(H_{2n}-H_{n-1}\right)=\frac{259\ze(3)+2\pi^2(5\log(2)+8\log(1+\sqrt{2}))}{128}.\label{equ:2022b}
\end{align}
\end{thm}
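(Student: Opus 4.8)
The plan is to turn each of the four sums into a single logarithmic integral by a Beta-function identity, factor the integrand, reduce everything to trilogarithms at algebraic arguments, and finally evaluate those via the trilogarithm inversion formula together with Clausen-function (golden/silver-ratio ladder) relations. The unifying point is that the harmonic factor $H_{2n}-H_{n-1}$ is exactly a parameter-derivative of a Beta integral.

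\emph{Step 1 (Beta-integral for the harmonic factor).} Starting from $B(s,n+1)=\int_0^1 x^{s-1}(1-x)^n\,dx=\Gamma(s)\Gamma(n+1)/\Gamma(s+n+1)$, differentiating in $s$ and evaluating at $s=n$ (using $\psi(n)-\psi(2n+1)=H_{n-1}-H_{2n}$ and $B(n,n+1)=1/(n\binom{2n}{n})$) gives
\[
\frac{H_{2n}-H_{n-1}}{n\binom{2n}{n}}=-\int_0^1 x^{n-1}(1-x)^n\log x\,dx .
\]
Writing $v_n(A,B)=\alpha^n+\beta^n$ with $\alpha+\beta=A$, $\alpha\beta=B$, dividing by $n$, summing $\sum_{n\ge1}y^n/n=-\log(1-y)$ (the interchange is legitimate since $|\alpha|,|\beta|\le A+\sqrt{A^2}<4$ forces $|\alpha x(1-x)|\le |\alpha|/4<1$ on $[0,1]$ in all four cases), and using $(1-\alpha x(1-x))(1-\beta x(1-x))=1-Ax(1-x)+Bx^2(1-x)^2$, I obtain
\[
\sum_{n\ge1}\frac{v_n(A,B)(H_{2n}-H_{n-1})}{n^2\binom{2n}{n}}=\int_0^1\frac{\log x}{x}\,\log\!\big(1-Ax(1-x)+Bx^2(1-x)^2\big)\,dx .
\]

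\emph{Step 2 (factor and reduce to trilogarithms).} I factor the quartic as $\prod_{i=1}^4(1-x/r_i)$, where $r_i$ runs over the roots of $x(1-x)=1/\alpha$ and $x(1-x)=1/\beta$, and use the elementary evaluation $\int_0^1\frac{\log x}{x}\log(1-x/r)\,dx=\Li_3(1/r)$ (expand the second logarithm and integrate termwise when $|r|>1$, then extend to complex $r\notin(0,1)$ by analytic continuation). Hence the sum equals $\sum_i\Li_3(1/r_i)$. A direct computation shows $1/r=\sqrt{\alpha}\,e^{\pm i\psi}$ with $\cos\psi=\sqrt{\alpha}/2$ for the $\alpha$-pair, and similarly for $\beta$; for the four target cases the angles are exactly $\pi/5,2\pi/5$ (for $v_n(3,1)$), $\pi/10,3\pi/10$ (for $v_n(5,5)$), $\pi/12,5\pi/12$ (for $v_n(4,1)$), and $\pi/8,3\pi/8$ (for $v_n(4,2)$)—rational multiples of $\pi$ matching the levels hidden in the denominators $25,50,12,128$. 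Since the integral is real, conjugate roots pair up and
\[
S=2\,\rre\,\Li_3\!\big(\sqrt{\alpha}\,e^{i\psi}\big)+2\,\rre\,\Li_3\!\big(\sqrt{\beta}\,e^{i\psi'}\big).
\]

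\emph{Step 3 (closed-form evaluation, the hard part).} I would first apply the trilogarithm inversion formula to any argument of modulus $>1$ (note $\sqrt\alpha>1$ always, and both moduli exceed $1$ for $v_n(5,5)$), replacing it by its reciprocal at the cost of explicit $\log^3(-z)$ and $\tfrac{\pi^2}{6}\log(-z)$ terms; the real parts of these elementary terms, with $|z|=\sqrt\alpha$, are precisely what produce the $\pi^2\log(\phi)$, $\pi^2\log(2+\sqrt3)$, $\pi^2\log(1+\sqrt2)$, and $\pi^2\log 2$ contributions. The remaining real parts of $\Li_3$ at the reduced points are then collapsed by the known golden- and silver-ratio trilogarithm ladder relations—organized through the Clausen/Glaisher functions $\cl_3$ at the above angles—to rational multiples of $\zeta(3)$. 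The main obstacle is exactly this last bookkeeping: selecting the correct branch of $\log(-z)$ for each complex argument in the inversion formula, verifying that the spurious imaginary ($\log^3$) parts cancel so that $S$ comes out real as it must, and matching the surviving ladder constants to the precise rational coefficients $41/25$, $1/50$, $23/12$, $259/128$ stated in the theorem.
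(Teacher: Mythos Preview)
Your Steps~1--2 are correct and give a genuinely different derivation from the paper's. The paper does not build the Beta-integral representation; instead it quotes three ready-made generating-function identities of Davydychev--Kalmykov for $\sum u^n/(n^3\binom{2n}{n})$, $\sum H_{n-1}u^n/(n^2\binom{2n}{n})$ and $\sum H_{2n-1}u^n/(n^2\binom{2n}{n})$ in terms of Clausen functions at $\theta$ (with $u=4\sin^2(\theta/2)$), combines them to obtain
\[
\sum_{n\ge1}\frac{u^n}{n^2\binom{2n}{n}}(H_{2n}-H_{n-1})=\zeta(3)-\cl_3(\theta)+\frac{\theta^2}{2}\log\Bigl(2\sin\frac{\theta}{2}\Bigr),
\]
and then substitutes $u=\alpha,\beta$ and applies the Clausen-function distribution relations. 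Your approach is more self-contained: the Beta-integral trick plus the factorisation reduce the same sum to $2\,\rre\,\Li_3(1/r)$ directly, without citing the physics identities. (One typo: your convergence bound ``$|\alpha|,|\beta|\le A+\sqrt{A^2}<4$'' is wrong as written---$A=5$ for $v_n(5,5)$---but the true bound $|\alpha|<4$ does hold in all four cases since $x(1-x)\le1/4$.)

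Where your proposal stumbles is the execution of Step~3. Your plan (inversion to modulus $<1$, then golden/silver ladders at points like $\phi^{-1}e^{i\pi/5}$) is unnecessarily circuitous, and it is not clear that standard ladder tables contain those specific algebraic points. The crucial simplification you missed is that your $1/r$ is not merely $\sqrt{\alpha}\,e^{i\psi}$ but in fact $1/r=1-e^{-i\theta}$ with $\theta=\pi-2\psi$ (check: $\rre(1/r)=\alpha/2=2\sin^2(\theta/2)=1-\cos\theta$ and $\Im(1/r)=\sin\theta$). Once you see this, a single application of the three-term trilogarithm functional equation
\[
\Li_3(z)+\Li_3(1-z)+\Li_3(1-1/z)=\zeta(3)+\tfrac{1}{6}\ln^3 z+\tfrac{\pi^2}{6}\ln z-\tfrac12\ln^2 z\,\ln(1-z)
\]
with $z=e^{-i\theta}$ (so $1-z=1/r$ and $1-1/z=\overline{1/r}$) and taking real parts yields exactly the paper's displayed identity above; your Step~3 then collapses to the same short Clausen-ladder computation the paper does. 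So your route works, but the right functional equation is the three-term one, not inversion plus ladders.
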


Motivated by his study of the congruences in \cite{Sun2009}, Sun proposed
identities \eqref{equ:10.62} and \eqref{equ:10.63} as \cite[Conjectures 10.62 and 10.63]{Sun2021}
(see also \cite[Eqs. (3.11) and (3.12)]{Sun2015}). He further conjectured \eqref{equ:2022a}
and \eqref{equ:2022b} in \cite{Sun2022}.

\subsection{Proof of Theorem~\ref{thm-sun's-conj} by Clausen function}
Recall that for any positive integer $m$ the Clausen function $\cl_m(\theta)$ is defined by
\begin{align*}
&{\cl}_{2m-1}(\theta):=\sum_{n=1}^\infty \frac{\cos(n\theta)}{n^{2m-1}}\quad \text{and}\quad {\cl}_{2m}(\theta):=\sum_{n=1}^\infty \frac{\sin(n\theta)}{n^{2m}} \qquad \forall \theta\in [0,\pi].
\end{align*}
Many special values of the Clausen function are closely related to the Riemann zeta values. For example, $\cl_{2m-1}(0)=\zeta(2m-1)$,
\begin{equation}\label{equ:Riemmann}
\cl_{2m-1}(\pi)=(2^{2-2m}-1)\zeta(2m-1),\quad\text{and}\quad \cl_{2m-1}\Big(\frac\pi2\Big)=\frac{\cl_{2m-1}(\pi)}{2^{2m-1}}.
\end{equation}
The following relations among special values of the Clausen function are well known (see, e.g., \cite[Eq. (6.51)]{Lewin1981}):
for any positive integer $r$ we have
\begin{align}
&\cl_3\big(\tfrac{\pi}{r}\big)+\cl_3\big(\tfrac{3\pi}{r}\big)+\cdots+\cl_3\big(\tfrac{(2r-1)\pi}{r}\big)=-\tfrac3{4r^2}\ze(3),\label{equ-cl-fun}\\
&\cl_3\big(\tfrac{2\pi}{r}\big)+\cl_3\big(\tfrac{4\pi}{r}\big)+\cdots+\cl_3\big(\tfrac{(2r-2)\pi}{r}\big)=-\big(1-\tfrac1{r^2}\big)\ze(3).\label{equ-cl-fun-1}
\end{align}
Setting $r=4,5,6$ in \eqref{equ-cl-fun} and $r=5$ in \eqref{equ-cl-fun-1} one obtains
\begin{align}
&\cl_{3}\big(\tfrac{\pi}{4}\big)+\cl_{3}\big(\tfrac{3\pi}{4}\big)=-\tfrac3{128}\ze(3), \qquad
\cl_{3}\big(\tfrac{\pi}{5}\big)+\cl_{3}\big(\tfrac{3\pi}{5}\big)=\tfrac9{25}\ze(3), \label{equ-cl-fun-exa-1}\\
&\cl_{3}\big(\tfrac{\pi}{6}\big)+\cl_{3}\big(\tfrac{5\pi}{6}\big)=\tfrac1{12}\ze(3), \qquad
\cl_{3}\big(\tfrac{2\pi}{5}\big)+\cl_{3}\big(\tfrac{4\pi}{5}\big)=-\tfrac{12}{25}\ze(3),\label{equ-cl-fun-exa-2}
\end{align}
since $\cl_3(\theta)=\cl_3(2\pi\pm \theta)$, $\cl_3(\pi)=-\tfrac3{4}\ze(3)$ and $\cl_3\big(\tfrac{\pi}{2}\big)=-\tfrac3{32}\ze(3)$ by \eqref{equ:Riemmann}.

On the other hand, Davydychev and Kalmykov have shown that
(see \cite[(2.36), (2.37) and (2.67)]{DavydychevDe2004}) the Ap\'ery-type inverse binomial series
\begin{align*}
&\sum_{n=1}^\infty \frac{1}{n^3\binom{2n}{n}}u^n=2\cl_3(\theta)+2\theta \cl_2(\theta)-2\ze(3)+\theta^2\log\Big(2\sin\frac{\theta}{2}\Big), \\
&\sum_{n=1}^\infty \frac{H_{n-1}}{n^2\binom{2n}{n}}u^n=4\cl_3(\pi-\theta)-2\theta\cl_2(\pi-\theta)+3\ze(3), \\
&\sum_{n=1}^\infty \frac{H_{2n-1}}{n^2\binom{2n}{n}}u^n=-2\cl_3(\theta)+4\cl_3(\pi-\theta)-2\theta\cl_2(\pi-\theta)-\theta\cl_2(\theta)+5\ze(3),
\end{align*}
where $u=4\sin^2\frac{\theta}{2}$ for any $\theta\in [0,\pi]$.
From the above three equations we get
\begin{align}\label{equ-comb}
\sum_{n=1}^\infty \frac{u^n}{n^2\binom{2n}{n}}\left(H_{2n}-H_{n-1}\right)&=\sum_{n=1}^\infty \frac{u^n}{n^2\binom{2n}{n}}\left(H_{2n-1}-H_{n-1}+\frac1{2n}\right) \nonumber\\
&=\ze(3)-\cl_3(\theta)+\frac{\theta^2}{2}\log\Big(2\sin\frac{\theta}{2}\Big).
\end{align}
Taking special values of $u$ we obtain
\begin{align*}
u=\Big(\frac{\sqrt{5}\pm 1}2\Big)^2=4\sin^2\frac{\theta}{2}\quad \Longrightarrow \quad& \sin\frac{\theta}{2}=\frac{\sqrt{5}\pm 1}{4}\quad \text{and}\quad \theta=\frac{2\pi}{5}\pm\frac{\pi}{5},\\
u=\frac{5\pm \sqrt{5}}{2}=4\sin^2\frac{\theta}{2}\quad \Longrightarrow \quad& \sin\frac{\theta}{2}=\sqrt{\frac{5\pm\sqrt{5}}{8}}\quad \text{and}\quad \theta=\frac{3\pi}{5}\pm\frac{\pi}{5},\\
u=2\pm\sqrt{3}=4\sin^2\frac{\theta}{2}\quad \Longrightarrow \quad&
\sin\frac{\theta}{2}=\frac{\sqrt{2\pm\sqrt{3}}}{2}\quad \text{and}\quad \theta=\frac{\pi}{2}\pm\frac{\pi}{3},\\
u=2\pm\sqrt{2}=4\sin^2\frac{\theta}{2}\quad \Longrightarrow \quad& \sin\frac{\theta}{2}=\frac{\sqrt{2\pm\sqrt{2}}}{2}\quad \text{and}\quad \theta=\frac{\pi}{2}\pm\frac{\pi}{4}.
\end{align*}
Therefore, \eqref{equ-cl-fun-exa-1}--\eqref{equ-comb} yield
\begin{align*}
\sum_{n=1}^\infty \frac{L_{2n}}{n^2\binom{2n}{n}}\left(H_{2n}-H_{n-1}\right)
&=2\ze(3)-\cl_{3}\big(\tfrac{3\pi}{5}\big)-\cl_{3}\big(\tfrac{\pi}{5}\big)+\frac{4\pi^2}{25}\log(\phi) \\
&=\frac{41\ze(3)+4\pi^2\log(\phi)}{25},\\
\sum_{n=1}^\infty \frac{v_n(5,5)}{n^2\binom{2n}{n}}\left(H_{2n}-H_{n-1}\right)
&=2\ze(3)-\cl_{3}\big(\tfrac{2\pi}{5}\big)-\cl_3\big(\tfrac{4\pi}{5}\big)+\frac{\pi^2\log\left(5^5\phi^6\right)}{50}\\
&=\frac{124\ze(3)+\pi^2\log\left(5^5\phi^6\right)}{50},\\
\sum_{n=1}^\infty \frac{v_n(4,1)}{n^2\binom{2n}{n}}\left(H_{2n}-H_{n-1}\right)
&=2\ze(3)-\cl_{3}\big(\tfrac{\pi}{6}\big)-\cl_{3}\big(\tfrac{5\pi}{6}\big)+\frac{\pi^2\log(2+\sqrt{3})}{6}\\
&=\frac{23\ze(3)+2\pi^2\log(2+\sqrt{3})}{12},\\
\sum_{n=1}^\infty \frac{v_n(4,2)}{n^2\binom{2n}{n}}\left(H_{2n}-H_{n-1}\right)
&=2\ze(3)-\cl_{3}\big(\tfrac{\pi}{4}\big)-\cl_{3}\big(\tfrac{3\pi}{4}\big)\\
& +\frac{\pi^2(5\log(2)+8\log(1+\sqrt{2}))}{64}\\
&=\frac{259\ze(3)+2\pi^2(5\log(2)+8\log(1+\sqrt{2}))}{128}.
\end{align*}

This concludes the proof of Theorem~\ref{thm-sun's-conj}. \hfill $\blacksquare$

\section{Ap\'ery-like sums via trigonometric integrals}

\subsection{Akhilesh's $n$-tails of Ap\'ery-type inverse binomial series}
In the two sequential papers \cite{Akhilesh1,Akhilesh} Akhilesh discovered some very intriguing and surprising connections between
MZVs and the Ap\'ery-type inverse binomial series \eqref{defn:gs}, with a slightly different normalization.
His ingenious idea is to study the $n$-tails (and more generally, double tails) of such series. After obtaining the explicit
formula for the depth one case one can then iterate this process to arbitrary depth.
To make it more transparent we reformulate one of his key results as follows. Set $\binom{0}{0}=1$,
\begin{align*}
b_n(z)=\frac{4^n z^{2n}}{\binn} \quad \text{and}\quad b_n=b_n(1)=\frac{4^n}{\binn} \quad \forall n\ge 0.
\end{align*}
Define
\begin{align}\label{defn-g}
g_s(t)=
\left\{
 \begin{array}{ll}
 \tan t\, dt, \quad & \hbox{if $s=1$;} \\
 dt \circ (\cot t\, dt)^{s-2}\circ dt, \quad & \hbox{if $s\ge 2$,}
 \end{array}
\right.
\end{align}
and their non-trigonometric counterpart
\begin{equation}\label{defn-G}
 G_{s}(t)=
 \left\{
\begin{array}{ll}
 \om_{2} & \hbox{if $s=1$;} \\
 \om_{1}\om_0^{s-2}\om_{1} \qquad & \hbox{if $s\ge 2$,}
 \end{array}
 \right.
\end{equation}
where
\begin{alignat}{5}\label{defn:om012}
\om_0:=&\, \frac{dt}{t}, \quad &\,\om_1:=&\,\frac{dt}{\sqrt{1-t^2}}, \quad &\,\om_2:=&\,\frac{t\,dt}{1-t^2}.
\end{alignat}

\begin{thm} \label{thm-gs-Akhilesh} \emph{(cf. \cite[Theorem~4]{Akhilesh})}
Let $n\in\N_0$, $\bfs=(s_1,\dots,s_d)\in\N^d$, $y\in(-\pi/2,\pi/2)$ if $s_1=1$ and $y\in[-\pi/2,\pi/2]$ if $s_1\ge 2$.
Then the $n$-tail of $\gs(\bfs;\sin y)$
\begin{align*}
\gs(\bfs;\sin y)_n:=&\, \sum_{n_1 > \cdots > n_d>n} \frac{b_{n_1}(\sin y)}{(2n_1)^{s_1}\cdots (2n_d)^{s_d}}\\
=&\, \frac{d}{dy} \int_0^y g_{s_1}\circ \cdots \circ g_{s_d}\circ b_{n}(\sin t) \,dt.
\end{align*}
Using non-trigonometric 1-forms, for all $z\in(-1,1)$ we have
\begin{align*}
\gs(\bfs;z)_n:=&\, \sum_{n_1 > \cdots > n_d>n} \frac{b_{n_1}(z)}{(2n_1)^{s_1}\cdots (2n_d)^{s_d}}\\
=&\, \sqrt{1-z^2}\frac{d}{dz} \int_0^z G_{s_1}\circ \cdots \circ G_{s_d}\circ b_{n}(t) \om_1.
\end{align*}
\end{thm}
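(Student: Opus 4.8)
The plan is to prove the non-trigonometric identity and to recover the trigonometric one by the change of variables $t=\sin u$: under it $\omega_0\mapsto\cot u\,du$, $\omega_1\mapsto du$ and $\omega_2\mapsto\tan u\,du$, hence $G_s\mapsto g_s$, while $\sqrt{1-z^2}\,\frac{d}{dz}\mapsto\frac{d}{dy}$ and $b_n(t)\mapsto b_n(\sin u)$. Thus the two displayed formulas are a single identity written in two coordinate systems, and it suffices to treat $z\in(-1,1)$. Writing $b_m(z)=4^mz^{2m}/\binom{2m}{m}$ and abbreviating the $n$-tail by $\sigma(\bfs;z)_n$, I would induct on the depth $d$, processing the word $G_{s_1}\circ\cdots\circ G_{s_d}$ from the innermost block outward and proving that each block creates exactly one new summation index carrying the correct exponent.

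Two elementary relations drive everything. Since $\frac{d}{dz}b_m(z)=\frac{2m}{z}b_m(z)$, one has $\int_0^z\omega_0\,b_m=\frac{b_m(z)}{2m}$, and therefore $\int_0^z\omega_0\,\sigma((j,\bfs');\cdot)_n=\sigma((j+1,\bfs');\cdot)_n$; this is the effect of the $\omega_0$'s inside $G_s$ ($s\ge2$), which raise the leading exponent from $2$ up to $s$. The heart of the matter is the depth-one base identity
\begin{equation*}
\sum_{m>n}\frac{b_m(z)}{2m}=\frac{z}{\sqrt{1-z^2}}\int_0^z\frac{b_n(t)}{\sqrt{1-t^2}}\,dt,
\end{equation*}
which I call $(\star)$. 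I would prove $(\star)$ from the reduction formula
\begin{equation*}
2n\int_0^z\frac{t^{2n}\,dt}{\sqrt{1-t^2}}=(2n-1)\int_0^z\frac{t^{2n-2}\,dt}{\sqrt{1-t^2}}-z^{2n-1}\sqrt{1-z^2}
\end{equation*}
(integration by parts) together with $\frac{4^n}{\binom{2n}{n}}=\frac{2n}{2n-1}\frac{4^{n-1}}{\binom{2n-2}{n-1}}$: both sides of $(\star)$ then satisfy the same first-order recursion $X_{n-1}-X_n=\frac{b_n(z)}{2n}$ in $n$ and both vanish as $n\to\infty$ for $|z|<1$, hence coincide. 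Combining $(\star)$ with the tower relation yields the companion fact that a double $\omega_1$ turns a seed $b_n$ into $\sum_{k>n}b_k(z)/(2k)^2$, i.e. creates one index with exponent $2$.

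For the assembly, reading from the inside, $\int_0^z G_{s_d}\circ b_n\omega_1=\int_0^z\omega_1\,\sigma((s_d);\cdot)_n$: the innermost $\omega_1$ together with the right $\omega_1$ of $G_{s_d}$ builds exponent $2$ (via the double-$\omega_1$ fact, or via $(\star)$ directly when $s_d=1$), the $\omega_0$'s raise it to $s_d$, and the left $\omega_1$ of $G_{s_d}$ leaves the outer $\int\omega_1$. Inductively, once $G_{s_j}\circ\cdots\circ G_{s_d}$ has been processed the running function equals $\int_0^z\omega_1\,\sigma((s_j,\dots,s_d);\cdot)_n$; prepending a block $G_s$ with $s\ge2$ sends $\int_0^{\cdot}\omega_1\,\sigma(\bfs';\cdot)_n$ to $\sigma((2,\bfs');\cdot)_n$ by the double-$\omega_1$ fact, raises $2$ to $s$ by the $\omega_0$-tower, and reapplies $\int\omega_1$, while $G_1=\omega_2$ produces the exponent-$1$ version directly from $(\star)$. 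Throughout, linearity in the seed and uniform convergence of the $b_m$-series on compact subsets of $(-1,1)$ legitimize interchanging summation and integration. After all $d$ blocks the integral is $\int_0^z\omega_1\,\sigma(\bfs;\cdot)_n$, and applying $\sqrt{1-z^2}\,\frac{d}{dz}$ strips the last $\omega_1$ to give $\sigma(\bfs;z)_n$, as claimed.

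I expect the main obstacle to be twofold. First, establishing $(\star)$ cleanly: the binomial normalisation and the downward recursion in $n$ must be matched exactly. Second, and more delicate, is the iterated-integral bookkeeping that guarantees each $G_s$ provably contributes one new index with exponent \emph{precisely} $s$ and nothing else; the inside-out induction, and the linearity and uniform-convergence justifications for interchanging sums and integrals, are where the real care is required. The boundary behaviour is the last thing to watch: $s_1\ge2$ permits $z=\pm1$ (endpoint $y=\pm\pi/2$), whereas $s_1=1$ forces $|z|<1$, exactly as the hypotheses on $y$ record.
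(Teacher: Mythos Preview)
Your argument is correct. The paper does not actually prove this theorem; it simply attributes it to Akhilesh's \cite[Theorem~4]{Akhilesh} and uses it as a black box. The closest the paper comes to a proof is its sketch for the depth-one identities in Theorem~\ref{thm-All-bn-it}: there one shows that the left- and right-hand sides satisfy the same first-order recursion in $n$ (namely $u_n-u_{n+1}=v_n-v_{n+1}$, checked by differentiation) and both vanish as $n\to\infty$, so telescoping gives equality. Your verification of $(\star)$ follows exactly this template, using the integration-by-parts reduction for $\int_0^z t^{2n}(1-t^2)^{-1/2}\,dt$ together with $b_n/b_{n-1}=\tfrac{2n}{2n-1}z^2$ to produce the telescoping identity $X_{n-1}-X_n=b_n(z)/(2n)$.

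What you add beyond the paper's sketch is the clean inside-out induction on depth, showing that after processing $G_{s_j}\circ\cdots\circ G_{s_d}$ the integral equals $\int_0^z\omega_1\,\sigma((s_j,\dots,s_d);\cdot)_n$. The two ingredients you isolate --- that a pair $\omega_1\omega_1$ acting on any $b_m$-linear seed creates one new outer index with exponent $2$, and that each $\omega_0$ raises that exponent by one --- are exactly what make the block structure of $G_s$ work, and your linearity/uniform-convergence remarks justify passing sums through the iterated integrals. The observation that $G_1=\omega_2$ factors through $(\star)$ because $\omega_1\cdot\frac{t}{\sqrt{1-t^2}}=\omega_2$ is the right way to absorb the $s=1$ case into the same induction. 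This is essentially Akhilesh's own approach in \cite{Akhilesh1,Akhilesh}, so your proposal fills in precisely what the paper leaves to the citation.
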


We now provide a new proof of the following result conjectured by Sun \cite[(3.10)]{Sun2015} first proved
by Ablinger \cite[(5-56)]{Ablinger2015}.
\begin{thm} We have
\begin{equation}\label{equ:2conjSun}
\sum_{n\ge1}\frac{3^n}{n^2\binom{2n}{n}}\left(H_n+\frac1{2n}\right)=\frac{\pi^2}3 \log 3.
\end{equation}
\end{thm}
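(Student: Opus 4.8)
The plan is to reduce the weighted binomial sum to the two Ap\'ery-type inverse binomial series already evaluated by Davydychev and Kalmykov in the previous subsection and then to exploit a near-total cancellation of the transcendental constants. First I would write $H_n+\frac1{2n}=H_{n-1}+\frac1n+\frac1{2n}=H_{n-1}+\frac3{2n}$, so that with $u=3$
\[
\sum_{n\ge1}\frac{3^n}{n^2\binom{2n}{n}}\Big(H_n+\frac1{2n}\Big)
=\sum_{n\ge1}\frac{H_{n-1}}{n^2\binom{2n}{n}}u^n+\frac32\sum_{n\ge1}\frac{1}{n^3\binom{2n}{n}}u^n.
\]
Setting $u=4\sin^2\frac\theta2=3$ forces $\theta=2\pi/3$, hence $\pi-\theta=\pi/3$ and $2\sin\frac\theta2=\sqrt3$. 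Substituting the two closed forms recalled above, the right-hand side becomes
\[
4\cl_3\Big(\tfrac\pi3\Big)+3\cl_3\Big(\tfrac{2\pi}3\Big)-2\theta\,\cl_2\Big(\tfrac\pi3\Big)+3\theta\,\cl_2\Big(\tfrac{2\pi}3\Big)+\tfrac32\theta^2\log\sqrt3,
\]
where the two $\ze(3)$ constants ($+3\ze(3)$ from the $H_{n-1}$-series and $-3\ze(3)$ from $\tfrac32\cdot(-2\ze(3))$) have already cancelled.

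Next I would evaluate the Clausen values. The third-order ones follow directly from the distribution identities \eqref{equ-cl-fun} and \eqref{equ-cl-fun-1}: taking $r=3$ in \eqref{equ-cl-fun} together with $\cl_3(\theta)=\cl_3(2\pi-\theta)$ and $\cl_3(\pi)=-\tfrac34\ze(3)$ gives $2\cl_3(\pi/3)-\tfrac34\ze(3)=-\tfrac1{12}\ze(3)$, so $\cl_3(\pi/3)=\tfrac13\ze(3)$, while $r=3$ in \eqref{equ-cl-fun-1} gives $2\cl_3(2\pi/3)=-\tfrac89\ze(3)$, i.e.\ $\cl_3(2\pi/3)=-\tfrac49\ze(3)$. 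For the second-order values I would invoke the duplication formula $\cl_2(2\phi)=2\cl_2(\phi)-2\cl_2(\pi-\phi)$; at $\phi=\pi/3$ this reads $\cl_2(2\pi/3)=2\cl_2(\pi/3)-2\cl_2(2\pi/3)$, whence $\cl_2(2\pi/3)=\tfrac23\cl_2(\pi/3)$.

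Finally I would assemble the pieces. The $\cl_3$ part contributes $4\cdot\tfrac13\ze(3)+3\cdot(-\tfrac49)\ze(3)=\tfrac43\ze(3)-\tfrac43\ze(3)=0$, and the $\cl_2$ part contributes $(-2\theta+3\theta\cdot\tfrac23)\cl_2(\pi/3)=(-2\theta+2\theta)\cl_2(\pi/3)=0$; only the elementary term survives, giving $\tfrac32\theta^2\log\sqrt3=\tfrac32\cdot\tfrac{4\pi^2}9\cdot\tfrac12\log3=\tfrac{\pi^2}3\log3$, as claimed. The genuine content — and the only delicate point — is this double cancellation: the answer is clean precisely because at $\theta=2\pi/3$ the arguments $\theta$ and $\pi-\theta=\theta/2$ are linked by the $\cl_2$ duplication relation, which annihilates the otherwise intractable Gieseking constant $\cl_2(\pi/3)$, while the $\cl_3$ coefficients $4$ and $3$ are exactly balanced against $\cl_3(\pi/3)=\tfrac13\ze(3)$ and $\cl_3(2\pi/3)=-\tfrac49\ze(3)$. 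Everything else is routine bookkeeping.
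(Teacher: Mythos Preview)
Your proof is correct. The decomposition $H_n+\tfrac1{2n}=H_{n-1}+\tfrac3{2n}$, the substitution of the Davydychev--Kalmykov closed forms at $u=3$ (i.e.\ $\theta=2\pi/3$), the evaluations $\cl_3(\pi/3)=\tfrac13\ze(3)$ and $\cl_3(2\pi/3)=-\tfrac49\ze(3)$ from the distribution relations with $r=3$, and the duplication identity $\cl_2(2\phi)=2\cl_2(\phi)-2\cl_2(\pi-\phi)$ yielding $\cl_2(2\pi/3)=\tfrac23\cl_2(\pi/3)$ are all valid, and the bookkeeping checks.

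However, your route is genuinely different from the paper's. The paper does \emph{not} invoke the Clausen-function formulas here; instead it places this theorem in the section on trigonometric iterated integrals precisely to illustrate Akhilesh's method. It rewrites the sum as $8\gs(2,1;\sqrt3/2)+12\gs(3;\sqrt3/2)$, applies Theorem~\ref{thm-gs-Akhilesh} to get $8A+12B$ with $A=\int_0^{\pi/3}dt\,(\tan t\,dt)\,dt$ and $B=\int_0^{\pi/3}t^2\cot t\,dt$, evaluates these via explicit antiderivatives in terms of $\Li_2$ and $\Li_3$ at $e^{2\pi i/3}$, and finishes by proving the identity $\tfrac{\pi^2}{9}+\Li_2(\eta)+\Li_2(\eta^2)=0$ for $\eta=e^{2\pi i/3}$ (equivalently, by noting that the imaginary part of $8A+12B$ must vanish). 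Your argument is shorter and more self-contained because it recycles the Davydychev--Kalmykov evaluations already quoted in the Clausen section and reduces everything to well-known special values; the paper's argument, while longer, is there to demonstrate that the Akhilesh iterated-integral machinery can recover such identities from scratch without appealing to those prior closed forms.
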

\begin{proof}
First, since $H_n=H_{n-1}+1/n$ we can rewrite \eqref{equ:2conjSun} as
\begin{align*}
8\gs(2,1;\sqrt{3}/2)+12\gs(3;\sqrt{3}/2)=\frac{\pi^2}3 \log 3.
\end{align*}
By Akhilesh's Theorem~\ref{thm-gs-Akhilesh} it suffices to show
\begin{equation}\label{equ:2conjSun2}
8A+12 B=\frac{\pi^2}3 \log 3,
\end{equation}
where
\begin{align*}
A=&\, \int_0^{\pi/3} dt (\tan t\, dt)\, dt,\quad
B=\int_0^{\pi/3} \frac{dt}{\tan t} \, dt\, dt= \int_0^{\pi/3} (t^2 \cot t) dt.
\end{align*}
By integrating by parts, we get
\begin{align*}
A=\int_0^{\pi/3} \left(\int_0^t (x \tan x) dx\right) dt
=&\, \left. t\int_0^t (x \tan x) dx\right|_0^{\pi/3} - \int_0^{\pi/3} (t^2 \tan t) dt \\
=&\, \frac{\pi}{3} \int_0^{\pi/3} (x \tan x) dx - \int_0^{\pi/3} (t^2 \tan t) dt.
\end{align*}
We can verify by differentiation that (see also \cite[Lemma 1]{Akhilesh})
\begin{align*}
\int_0^z (t \tan t) dt=&\, \frac{i}2 \Big(z^2+\Li_2(-e^{2iz})+\frac12\zeta(2)\Big) - z\log(1 + e^{2iz}),\\
\int_0^z (t^2 \tan t) dt=&\, \frac{i}3 z^3 - z^2 \log(1 + e^{2iz}) + iz \Li_2(-e^{2iz}) - \frac12 \Li_3(-e^{2iz}) -\frac38\zeta(3), \\
\int_0^z (t^2 \cot t) dt=&\, -\frac{i}3 z^3 + z^2\log(1 + e^{iz})+ z^2\log(1 - e^{iz}) - 2i z \Li_2(e^{iz}) \\
&\, - 2iz \Li_2(-e^{iz})+ 2 \Li_3(e^{iz}) + 2 \Li_3(-e^{iz})-\frac12\zeta(3),
\end{align*}
where we have used the fact that $\Li_2(-1)=\zeta(\bar2)=-\frac12\zeta(2)=-\frac{\pi^2}{12}$ and $\Li_3(-1)=\zeta(\bar3)=-\frac34\zeta(3)$.
Setting $\eta=e^{2\pi i/3}$, after a lot of simplification and cancelations we obtain
\begin{align}\label{equ:showIm=0}
8A+12B= \frac{\pi^2 \log 3}{3} -\frac{2\pi i}3 \left(\frac{\pi^2}9 + 2 \Li_2(-\eta) + 3\Li_2(\eta)\right).
\end{align}
Noting that by the series definition
\begin{equation}\label{equ:sqForm}
\Li_2(-\eta) + \Li_2(\eta) =\sum_{k=1}^\infty \frac{(-\eta)^{k}+\eta^k}{k^2}=\frac12 \Li_2(\eta^2),
\end{equation}
we know the quantity in the parenthesis in \eqref{equ:showIm=0}
\begin{align}\label{equ:Its0}
\frac{\pi^2}{9}+2 \Li_2(-\eta) + 3\Li_2(\eta)=\frac{\pi^2}{9}+\Li_2(\eta^2)+\Li_2(\eta)
\end{align}
is real (invariant under conjugation) so that it must be zero
as $8A+12B$ is real. Thus \eqref{equ:2conjSun} follows.

We can also prove directly \eqref{equ:Its0} vanishes as follows. By \eqref{equ:sqForm} we have
\begin{align*}
2 \Li_2(-\eta) + 3\Li_2(\eta)+\Li_2(1) =&\, \Li_2(\eta^2)+\Li_2(\eta)+\Li_2(1) \\
=&\, \sum_{k=1}^\infty \frac{\eta^{2k}+\eta^k+1}{k^2}=\frac13 \Li_2(1)
\end{align*}
since $\eta^{2k}+\eta^k+1=0$ if $3\nmid k$ and $\eta^{2k}+\eta^k+1=3$ if $3\mid k$.
This proves \eqref{equ:Its0} since $\Li_2(1) =\zeta(2)=\pi^2/6$. We have completed the proof of the theorem.
\end{proof}

\subsection{Odd-indexed variations of Ap\'ery-type inverse binomial series}
By extending Akhilesh's idea it is not too difficult to discover the following recursive formulas. Define
\begin{align*}
f_1(t):=1,\quad f_2(t):= \frac{t}{\sqrt{1-t^2}},\quad f_3(t)=\frac1t, \quad f_{20}(t):= \frac{1}{t(1\mp t^2)}, \quad f_5(t)=t.
\end{align*}
Then by applying $t\to \sin^{-1} t$ in Theorem~\ref{thm-gs-Akhilesh} and its odd-indexed variations we find the following
iterative formulas.

\begin{thm} \label{thm-All-bn-it}
For any positive integer $n$ and $|z|<1$ we have
\begin{align}
 \sum_{m>n} \frac{b_{m}(z)}{2m} = &\, f_2(z)\int_0^z b_{n}(t) \om_1, \label{bn-it1} \\
 \sum_{m>n} \frac{b_{m}(z)}{(2m)^s}=&\, f_1(z) \int_0^z \om_0^{s-2} \om_1\, b_{n}(t)\om_1\quad\forall s\ge 2,\label{bn-it2} \\
 \sum_{m\ge n} \frac{b_{m}(z)}{2m+1}=&\,f_{20}(z)\int_0^z b_{n}(t) \om_1, \label{bn-it3} \\
 \sum_{m\ge n} \frac{b_{m}(z)}{(2m+1)^s}=&\,f_3(z)\int_0^z \om_0^{s-2} \om_3\, b_{n}(t) \om_1 \quad\forall s\ge 2, \label{bn-it4} \\
 \sum_{m>n} \frac{b_{m}(z)}{2m-1} = &\, f_5(z) \int_0^z \om_3\, b_{n}(t) \om_1 +f_2(z) \int_0^z b_{n}(t) \om_1, \label{bn-it5} \\
 \sum_{m>n} \frac{b_{m}(z)}{(2m-1)^{s}} = &\, f_5(z) \int_0^z (\om_0+1) \om_0^{s-2} \om_3\, b_{n}(t) \om_1\quad\forall s\ge 2, \label{bn-it6}
\end{align}
where $\om_j$ ($0\le j\le 2$) are defined by \eqref{defn:om012},
\begin{align*}
\om_3=\frac{dt}{t\sqrt{1-t^2}} \quad\text{and}\quad \om_{20}=\om_2+\om_0=\frac{dt}{t(1-t^2)}
\end{align*}
such that $f_j(t)\om_1=\om_j$ for all $j\ge 1$.
\end{thm}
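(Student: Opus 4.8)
The plan is to treat all six formulas as \emph{depth-one} identities driven by a single differential ``engine.'' Since $b_m(z)=4^m z^{2m}/\binom{2m}{m}$ is a monomial, one has $z\frac{d}{dz}b_m(z)=2m\,b_m(z)$ termwise, and hence, writing $P_s(z):=\sum_{m>n}b_m(z)/(2m)^s$, $Q_s(z):=\sum_{m\ge n}b_m(z)/(2m+1)^s$, and $R_s(z):=\sum_{m>n}b_m(z)/(2m-1)^s$, the three lowering relations $z\frac{d}{dz}P_s=P_{s-1}$, $\frac{d}{dz}(zQ_s)=Q_{s-1}$, and $z^2\frac{d}{dz}(R_s/z)=R_{s-1}$ hold for every $s\ge1$. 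Combined with the Fundamental Theorem of Calculus for iterated integrals — differentiating $\int_0^z \varphi_1\cdots\varphi_r$ peels off the outermost $1$-form $\varphi_1$ and evaluates it at $z$ — these recurrences let me bootstrap each family from a single base case. Throughout I abbreviate the common innermost block by $W(z):=\int_0^z b_n(t)\om_1$, so that $W'(z)=b_n(z)/\sqrt{1-z^2}$, and I write $V(z):=\int_0^z \om_3\, b_n(t)\om_1$.

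For the even family \eqref{bn-it1}--\eqref{bn-it2} I would simply specialize Akhilesh's Theorem~\ref{thm-gs-Akhilesh} to depth $d=1$ and carry out the outer operator $\sqrt{1-z^2}\frac{d}{dz}$ by the Fundamental Theorem of Calculus. For $s=1$ the outermost form of $G_1=\om_2$ is $t\,dt/(1-t^2)$, producing $\sqrt{1-z^2}\cdot\frac{z}{1-z^2}W=\frac{z}{\sqrt{1-z^2}}W=f_2(z)W$; for $s\ge2$ the outermost form of $G_s=\om_1\om_0^{s-2}\om_1$ is $\om_1$, and here the factor $\sqrt{1-z^2}$ cancels the $1/\sqrt{1-z^2}$ produced by differentiating the outer $\om_1$, leaving $f_1(z)\int_0^z\om_0^{s-2}\om_1\,b_n\om_1$. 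In particular every inner integral terminates in the same block $b_n\om_1=W$, which is precisely what makes the three families comparable.

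With the even case in hand, I would treat \eqref{bn-it3}--\eqref{bn-it6} by induction on $s$ against the lowering relations above. The upper steps are mechanical: applying $\frac{d}{dz}(z\,\cdot)$ to the proposed right-hand side of \eqref{bn-it4} peels the outermost $\om_0$ and returns the same formula with $s$ replaced by $s-1$, matching $Q_{s-1}$; likewise applying $z^2\frac{d}{dz}(z^{-1}\,\cdot)$ to \eqref{bn-it6} reproduces \eqref{bn-it6} at $s-1$. The deliberately placed factor $(\om_0+1)=\om_0^{s-1}+\om_0^{s-2}$ is exactly what lets this second-order lowering operator close, and at $s=2$ it collapses to the two-term expression \eqref{bn-it5}. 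Since the relevant quantities vanish at $z=0$, matching derivatives suffices in each step.

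The genuine content, and the step I expect to be the main obstacle, is the two base cases \eqref{bn-it3} and \eqref{bn-it5}, which are \emph{not} literal specializations of the (even-indexed) Theorem~\ref{thm-gs-Akhilesh}. Here I would import the even case as data: from $P_1=f_2(z)W$ one gets $P_0=z\frac{d}{dz}P_1=\frac{z}{(1-z^2)^{3/2}}W+\frac{z^2 b_n}{1-z^2}$, whence $Q_0=b_n+P_0$ (the extra $b_n$ reflecting the $m=n$ term) and $R_0=P_0$. It then remains to verify that the proposed closed forms solve the first-order equations $\frac{d}{dz}(zQ_1)=Q_0$ and $z^2\frac{d}{dz}(R_1/z)=R_0$. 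Expanding each side using $W'=b_n/\sqrt{1-z^2}$ and matching the coefficients of $b_n$ and of $W$ \emph{separately} is the computational crux: for \eqref{bn-it3} the $b_n$-matching forces $zf_{20}=1/\sqrt{1-z^2}$, i.e.\ $f_{20}(z)=1/(z\sqrt{1-z^2})$ (equivalently $f_{20}\om_1=\om_{20}$), after which the $W$-matching checks out automatically; for \eqref{bn-it5} a single term $f_2W$ is insufficient and one is forced to the two-term Ansatz $R_1=f_5(z)V+f_2(z)W$, whose $W$-contributions recombine via $zW(1-z^2)+z^3W=zW$ to reproduce $P_0$ exactly. Alternatively, one may establish these two depth-one identities ab initio as the ``odd-indexed variations'' of Akhilesh's theorem referred to in the text, by repeating his argument with the Beta-type relations $\frac{1}{z}\int_0^z b_m\,dt=b_m/(2m+1)$ and $z\int_0^z b_m\,dt/t^2=b_m/(2m-1)$ in place of $\int_0^z b_m\,\om_0=b_m/(2m)$.
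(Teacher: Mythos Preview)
Your approach is correct but genuinely different from the paper's. The paper does \emph{not} induct on $s$ or appeal to Akhilesh's theorem; instead it runs a uniform telescoping argument in the tail index $n$: for each of the six identities it writes $u_n(z)$ for the left side and $v_n(z)$ for the right, verifies by direct differentiation that the single-term difference $u_n(z)-u_{n+1}(z)$ equals $v_n(z)-v_{n+1}(z)$, and then sums over $n$ using $\lim_{n\to\infty}u_n(z)=\lim_{n\to\infty}v_n(z)=0$. This treats all six formulas on the same footing and is entirely self-contained (no dependence on Theorem~\ref{thm-gs-Akhilesh}).

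Your route, by contrast, imports Theorem~\ref{thm-gs-Akhilesh} to obtain \eqref{bn-it1}--\eqref{bn-it2} outright, then bootstraps the odd families \eqref{bn-it3}--\eqref{bn-it6} from \eqref{bn-it1} via the lowering operators $z\frac{d}{dz}$, $\frac{d}{dz}(z\,\cdot)$, and $z^2\frac{d}{dz}(z^{-1}\,\cdot)$, inducting on $s$. The calculations you outline (the $P_0$ formula, the matching of $b_n$- and $W$-coefficients for the base cases, the collapse of $(\om_0+1)\om_0^{s-2}$ at $s=2$ to \eqref{bn-it5}) all check out. What you gain is an explicit explanation of \emph{why} each right-hand side has the shape it does---in particular, why \eqref{bn-it5} needs two terms and why \eqref{bn-it6} carries the factor $(\om_0+1)$---and how the three families are analytically linked. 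What the paper's telescoping-in-$n$ argument buys is uniformity and independence from Akhilesh's result: one short calculus verification per formula, with no need to separate base cases from induction steps.
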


\begin{proof} We will only sketch the idea of the proof of \eqref{bn-it1}--\eqref{bn-it6}. Let $u_n(z)$ (resp.\ $v_n(z)$)
be the left-hand (resp.\ right-hand) side of any one of the equations above. Then one can show by differentiation
that $u_n(z)-u_{n+1}(z)=v_n(z)-v_{n+1}(z)$. Since $\lim_{n\to\infty}u_n(z)=\lim_{n\to\infty} v_n(z)=0$, the equation
follows from telescoping. We leave the details to the interested read since the verification is a routine application
of calculus.
\end{proof}

\begin{ex}
As an easy example of Theorem \ref{thm-All-bn-it} we apply \eqref{bn-it2} followed by \eqref{bn-it3} to get
\begin{align*}
 \sum_{n_1>n_2\ge 0} \frac{ b_{n_1}}{n_1^2 (2n_2+1)}
=&\, 4\int_0^1 \om_1 \circ \om_{20} \circ\om_1.
\end{align*}
By the change of variables $t\to (1-t^2)/(1+t^2)$ we  have
\begin{align*}
 \sum_{n_1>n_2\ge 0} \frac{ b_{n_1}}{n_1^2 (2n_2+1)}
=&\,- 4\int_0^1 (\tx_{-i}-\tx_{i})\circ (\tx_{0}+\tx_{-1}+\tx_{1})\circ (\tx_{-i}-\tx_{i})=7\ze(3),
\end{align*}
where we have used Au's Mathematica package \cite{Au2020} at the last step.
\end{ex}

\subsection{Ap\'ery-type binomial series}
It turns out Akhilesh's method of using iteration to express Ap\'ery-type inverse binomial series can be generalized
to binomial series and even to the case where the summation indices can be either even or odd. To do this, we need
to extend the definition of iterated integrals.
For any 1-forms $f_1(t)\,dt,f_2(t)\,dt$ and functions $F(t)$ and $G(t)$, we define
\begin{align*}
 \big (F(t)+ f_1(t)\,dt\circ G(t) \big)\circ f_2(t)\,dt := F(t)f_2(t) \,dt+ f_1(t)\,dt \circ G(t)f_2(t)\,dt.
\end{align*}

Put $a_0(x)=1$, $a_n(x)=\binom{2n}{n}x^{2n}/4^n$ and $a_n=a_n(1)$ for all $n\ge 0$. Define
\begin{align}
p_s(t)=&\, \tan t\,dt\,(\cot t \,dt)^{s-1} ( 1-\csc t \,dt\circ\sec t). \label{defn-p}
\end{align}

\begin{thm} \label{thm-ga}
For all $n\in\N_0$, $\bfs=(s_1,\dots,s_d)\in\N^d$ and $y\in[-\pi/2,\pi/2]$ we have
\begin{align}\label{defn-ga}
\ga(\bfs;\sin y):=&\, \sum_{n_1>\cdots > n_d>n} \frac{a_{n_1}(\sin y)}{(2n_1)^{s_1}\cdots (2n_d)^{s_d}}\nonumber \\
=&\, \cot y \frac{d}{dy} \int_0^y p_{s_1} \circ\cdots \circ p_{s_d} \circ a_{n}(\sin t) \tan t\,dt.
\end{align}
Here if $y=\pm \pi/2$ then the last expression is defined to be the limit as $y\to \pm \pi/2$.
\end{thm}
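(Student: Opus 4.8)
The plan is to prove the identity by induction on the depth $d$, reducing everything to the single-index case $d=1$, which is then settled by a differentiate-and-telescope argument in the spirit of the proof of Theorem~\ref{thm-All-bn-it}. Throughout I write $\ga(\bfs;\sin y)_n$ for the $n$-tail appearing on the left-hand side, and I record first the two elementary recursions that drive all the computations. With $a_n(x)=\binom{2n}{n}x^{2n}/4^n$ one has $a_n'(x)=(2n/x)a_n(x)$, hence
\begin{equation*}
\frac{d}{dt}\,a_n(\sin t)=2n\cot t\,a_n(\sin t),\qquad \frac{a_{n+1}(x)}{a_n(x)}=\frac{(2n+1)x^2}{2(n+1)}.
\end{equation*}
The first encodes the appearance of the weights $(2n_j)^{-s_j}$ through repeated integration against $\cot t\,dt$, and the second encodes the index shift that a single copy of $p_s$ produces.

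The heart of the proof is the base case $d=1$, namely the single-index identity
\begin{equation*}
\cot y\,\frac{d}{dy}\int_0^y p_s\circ a_m(\sin t)\tan t\,dt=\sum_{k>m}\frac{a_k(\sin y)}{(2k)^s}\qquad(m\ge 0,\ s\ge 1).
\end{equation*}
I would first make the outer operator explicit. Since $p_s$, defined in \eqref{defn-p}, begins with the form $\tan t\,dt$, the operator $\cot y\tfrac{d}{dy}$ strips this outermost form and cancels the resulting factor $\tan y$, leaving
\begin{equation*}
\int_0^y(\cot t\,dt)^{s-1}\Big[a_m(\sin t)\tan t\,dt-\csc t\,dt\circ a_m(\sin t)\sec t\tan t\,dt\Big],
\end{equation*}
where the bracket is the expansion of $(1-\csc t\,dt\circ\sec t)\circ a_m(\sin t)\tan t\,dt$ produced by the composition rule introduced just before the theorem. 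Denoting the two sides by $U_m(y)$ and $V_m(y)$, I would then telescope in $m$: verify $U_m(y)-U_{m+1}(y)=V_m(y)-V_{m+1}(y)$ by differentiating in $y$ and invoking the two recursions above, and check that $\lim_{m\to\infty}U_m(y)=\lim_{m\to\infty}V_m(y)=0$ for each fixed $y$, the limit for $V_m$ being the vanishing of a convergent tail and that for $U_m$ following from the concentration of $a_m(\sin t)$. Telescoping then yields $U_m=V_m$ for all $m$.

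For the inductive step I would assume the statement in depth $d-1$. Writing $\bfs=(s_1,\dots,s_d)$, $\bfs''=(s_2,\dots,s_d)$ and
\begin{equation*}
\mathcal J(y)=\int_0^y p_{s_2}\circ\cdots\circ p_{s_d}\circ a_n(\sin t)\tan t\,dt,
\end{equation*}
the inductive hypothesis gives $\mathcal J'(y)=\tan y\,\ga(\bfs'';\sin y)_n$. Peeling off the outermost $p_{s_1}$ and applying $\cot y\tfrac{d}{dy}$ exactly as in the base case reduces $\cot y\tfrac{d}{dy}\int_0^y p_{s_1}\circ\mathcal J$ to a functional of $\mathcal J$ that depends on $\mathcal J$ only through its derivative $\mathcal J'$. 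Expanding $\ga(\bfs'';\sin t)_n$ into its defining series and using linearity, each inner multi-index $(n_2,\dots,n_d)$ contributes a summand whose seed has the same derivative as $a_{n_2}(\sin t)\tan t\,dt$ up to the constant $\prod_{j\ge 2}(2n_j)^{-s_j}$; applying the base case with $m=n_2$ and resumming over $n_2>\cdots>n_d>n$ reconstructs $\ga(\bfs;\sin y)_n$ exactly.

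The main obstacle is the base case: one must make the action of the operator built from $p_s$ completely explicit and verify, via the binomial recursions, that the factor $(1-\csc t\,dt\circ\sec t)$—which is precisely what distinguishes the binomial kernel $p_s$ from Akhilesh's inverse-binomial kernel $g_s$ in Theorem~\ref{thm-gs-Akhilesh}—reproduces both the weight $a_k(\sin y)$ and the index shift $m\mapsto k>m$. A secondary, purely analytic obstacle is to justify differentiation under the integral sign and the interchange of summation and integration (uniform convergence on compact subsets of $(-\pi/2,\pi/2)$, with the endpoints $y=\pm\pi/2$ read as limits, legitimate when $s_1\ge 2$) and to confirm the two vanishing limits used in the telescoping.
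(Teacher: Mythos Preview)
Your proposal is correct and follows the paper's approach. The paper omits the proof of Theorem~\ref{thm-ga} entirely, saying only that it can be proved ``using exactly the same idea as that of Theorem~\ref{thm-All-bn-it}'', namely the differentiate-and-telescope argument; your base-case telescoping in $m$ is precisely that argument applied to the single-step recursion \eqref{an-iterate1}, and your induction on depth---reducing to the base case via the linear dependence on $R(t)=\cot t\,\mathcal J'(t)=\ga(\bfs'';\sin t)_n$---is the natural way to iterate it to arbitrary $d$.
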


When computing sums with both even and odd summation indices we may use the following iterative formulas.
Let $y\in(-\pi/2,\pi/2)$ and $F_s= (\cot t\, dt)^{s-1}$ for $s\in\N$. Then
\begin{align}
\sum_{m>n} \frac{a_m(\sin y)}{(2m)^s} =&\, \int_0^y F_s (1-\csc t \,dt\circ\sec t)a_n(\sin t) \tan t\,dt,\label{an-iterate1}\\
\sum_{m>n} \frac{a_m(\sin y)}{(2m+1)^s} =&\,\csc y \int_0^y F_s (1-dt\circ\csc t\sec t)a_n(\sin t) \sin t\tan t\,dt, \label{an-iterate2}\\
\sum_{m>n} \frac{a_m(\sin y)}{2m-1} =&\,\cos y\int_0^y a_n(\sin t) \tan t \sec t \, dt, \label{an-iterate3}\\
\sum_{m>n} \frac{a_m(\sin y)}{(2m-1)^s}
=&\, \sin y \int_0^y F_{s-1} \frac{dt}{\tan^2 t} a_n(\sin t) \tan t \sec t \, dt \quad (s\ge2).\label{an-iterate4}
\end{align}

All the formulas \eqref{an-iterate1}--\eqref{an-iterate4} as well as Theorem~\ref{thm-ga} can be proved
using exactly the same idea as that of Theorem~\ref{thm-All-bn-it}. We thus omit their proofs here.

\begin{ex} \label{eg:illum}
We now compute the following Ap\'ery-like sum as an illuminating example using the above formulas:
\begin{equation*}
\sum_{n>m> 0} \frac{a_{n}^2}{(2n-1)m^2}=\sum_{n>m> 0} \binn^2 \frac{1}{16^n (2n-1)m^2}.
\end{equation*}
First, by \eqref{an-iterate3} and then \eqref{an-iterate1} we get
\begin{align}
& \sum_{n>m> 0} \frac{a_{n} \sin^{2n} y}{(2n-1)(2m)^2}
= \cos y\int_0^y \sum_{m>0} \frac{a_m(\sin t)}{(2m)^2} \tan t \sec t \, dt \nonumber\\
=&\, \cos y\int_0^y \tan t \sec t \, dt\, \cot t\, dt\, (1-\csc t \,dt\circ\sec t) \tan t\,dt\nonumber\\
=&\, \cos y\int_0^y \tan t \sec t \, dt\, \cot t\, dt\, (\tan t\,dt-\csc t \,dt\circ\sec t\tan t\,dt)\nonumber\\
=&\, \cos y\int_0^y \tan t \sec t \, dt\, \cot t\, dt\, \Big(\csc t-\cot t\Big) \,dt. \label{equ:sqEx1}
\end{align}
Note that by the beta integral
\begin{align}\label{equ:betaIntg}
\int_0^{\pi/2} \sin^{2n} t \,dt= \frac12 B \Big(n+\frac12,\frac12\Big)=\frac{\pi}{2}a_{n}.
\end{align}
Thus integrating \eqref{equ:sqEx1} over $(0,\pi/2)$ and dividing by $\pi/2$ we get
\begin{align*}
 &\, \sum_{n>m> 0} \frac{a_{n}^2}{(2n-1)m^2}
=\frac{8}{\pi} \int_0^{\pi/2} \cos t\,dt\, \,d(\sec t) \cot t \,dt \Big(\csc t-\cot t\Big) \,dt \nonumber \\
=&\, \frac{8}{\pi} \int_0^{\pi/2} dt\, \cot t \,dt \Big(\csc t-\cot t\Big) \,dt
-(1-\sin t) \csc t \,dt \Big(\csc t-\cot t\Big)\,dt.
\end{align*}
For any $N$th roots of unity $\xi$ and $\xi'$ we set $\tx_\xi=dt/(\xi-t)$ and $\td_{\xi,\xi'}=\tx_\xi-\tx_{\xi'}$.
Then under the change of variables $t\to \sin^{-1} [(1-t^2)/(1+t^2)]$ we have
\begin{align}\label{equ-1forms-ta-tx}
dt \to i \td_{-i,i}, \
\cot t\,dt \to \ty,\
\csc t \,dt \to \td_{-1,1}, \
\end{align}
where $\ty=\tx_{-i}+\tx_{i}-\tx_{-1}-\tx_{1}$ and $\tz=-\ta-\tx_{-i}-\tx_{i}$.
Hence
\begin{align*}
\sum_{n>m> 0} \frac{a_{n}^2}{(2n-1)m^2}
=&\, \frac{8 i}{\pi} \int_0^1 \left\{
\aligned
 & (2\tx_{-1}-\tx_{-i}-\tx_{i})(1-\ty) \td_{-i,i} \\
 & \ \quad -(2\tx_{-1}-\tx_{-i}-\tx_{i})\td_{-1,1}
\endaligned \right\}.
\end{align*}
Now we may convert all the iterated integral terms into CMZVs of level 4 and then use
Au's Mathematica package \cite{Au2020} to reduce the above to
\begin{align*}
\sum_{n>m> 0} \frac{a_{n}^2}{(2n-1)m^2}
=&\, \frac{3\pi^2}{2} -\frac{\pi}{3} -6\log^2 2+8\log 2-
 \frac{64}{\pi}{\rm Im}\Li_3\Big(\frac{1+i}2\Big)-\frac{16G}{\pi}, 
\end{align*}
where Im$\,z$ is the imaginary part of $z$ and
$G=\sum_{k\ge 0}\frac{(-1)^k}{(2k+1)^2}$ is Catalan's constant.
One can check that numerically the sum is approximately 0.14117237.
\end{ex}

Using the iterations \eqref{an-iterate1}--\eqref{an-iterate4}
and by the same change of variables $t\to \sin^{-1} [(1-t^2)/(1+t^2)]$ one can show that
all the Ap\'ery-like sums $\gs(\bfs;1)$ defined by \eqref{defn:gs}, $\ga(\bfs;1)$
defined by \eqref{defn-ga} and their odd-indexed variations can be expressed as
$\Q$-linear combinations of the real or the imaginary part of CMZVs of level 4. Furthermore, by applying
the beta integral \eqref{equ:betaIntg}, similar results hold when one replaces $a_n$ and
$b_n$ by $a_n^2$ and $b_n^2$, respectively, with the only difference being that sometimes an extra
factor of $1/\pi$ is needed as shown by Example~\ref{eg:illum} above. For more details we refer the readers
to \cite{XuZhao2022}.

\subsection{Alternating Ap\'ery-like sums}
One such sum in \eqref{equ:AperySeries} already appeared in Ap\'ery's original work. Generally, for any $n\in\N$, $\bfeta=(\eta_1,\dots,\eta_d)\in\{\pm 1\}^d$, $\bfs=(s_1,\dots,s_d)\in\N^d$ and a complex variable $z$,
we define
\begin{align*}
\ga(\bfs;\bfeta;z)_n:=\sum_{n_1>\dots>n_d>n}
 \frac{a_{n_1}(z)\eta_1^{n_1}\cdots \eta_d^{n_d}}{(2n_1)^{s_1}\cdots (2n_d)^{s_d}},\\
\gs(\bfs;\bfeta;z)_n:=\sum_{n_1>\dots>n_d>n}
 \frac{b_{n_1}(z)\eta_1^{n_1}\cdots \eta_d^{n_d}}{(2n_1)^{s_1}\cdots (2n_d)^{s_d}},
\end{align*}
which are called the $n$-\emph{tails} of the alternating Ap\'ery-type series
\begin{align*}
\ga(\bfs;\bfeta;z):=\ga(\bfs;\bfeta;z)_0 \quad\text{and}\quad
\gs(\bfs;\bfeta;z):=\gs(\bfs;\bfeta;z)_0,
\end{align*}
respectively.
To save space we will put a bar on top of $s_j$ if the corresponding $\eta_j=-1$. For example,
\begin{align*}
 \gs(\bar3,4,\bar1;z)\,&=\sum_{n_1>n_2>n_3>0}
 \frac{b_{n_1}(z) (-1)^{n_1+n_3}}{(2n_1)^{3}(2n_2)^{4} (2n_3) } .
\end{align*}

By modifying the results in the previous sections we can derive the corresponding formulas for alternating sums.
The key is to replace the trigonometric 1-forms by hyperbolic ones. To fix notation, we put
\begin{alignat*}{4}
 \sh z &\,= -i \sin (i z)&\,=\frac{e^z - e^{-z}} {2},&&\quad
 \ch z &\,= \cos (i z)&\,=\frac{e^z + e^{-z}} {2},\\
 \th z &\,= -i \tan(i z)&\,=\frac{e^z - e^{-z}}{e^z + e^{-z}},&&\quad
 \cth z &\,= i \cot(i z)&\,=\frac{e^z + e^{-z}} {e^z - e^{-z}}
.
\end{alignat*}
Define the hyperbolic counterpart of $p_s(t)$ and $g_s(t)$ by
\begin{align*}
\tlp_s(t)=&\, \th t\,dt\, (\cth t\,dt)^{s-1} ( 1-\csch t \,dt\circ\sech t), \\
\tlg_s(t)=&\,
\left\{
 \begin{array}{ll}
 \th t\, dt, \quad & \hbox{if $s=1$;} \\
 dt \circ (\cth t\, dt)^{s-2}\circ dt, \quad & \hbox{if $s\ge 2$.}
 \end{array}
\right.
\end{align*}

\begin{thm} \label{thm-gsbar1-ItInt}
Set $\psi=\sh^{-1} 1=\log(1+\sqrt{2})$. For all $n\in\N_0$, $\bfs=(s_1,\dots,s_d)\in\N^d$ and $y\in[-\psi,\psi]$ we have
\begin{align*}
\ga(\bfs;i\sh y)_n=&\, (-1)^d \cth y \frac{d}{dy} \int_0^y \tlp_{s_1}\circ\cdots\circ\tlp_{s_d}\circ a_{n}(i\sh t) \th t\,dt,\\
\gs(\bfs;i\sh y)_n=&\,(-1)^d \frac{d}{dy}\int_0^y \tlg_{s_1}\circ \cdots \circ \tlg_{s_d}\circ b_n(i\sh t) \,dt.
\end{align*}
\end{thm}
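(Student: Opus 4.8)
The plan is to derive both identities from their already‑established trigonometric counterparts, Akhilesh's Theorem~\ref{thm-gs-Akhilesh} and Theorem~\ref{thm-ga}, by the analytic continuation $Y\mapsto iy$. The guiding observation is that every hyperbolic $1$-form appearing in $\tlg_s$ and $\tlp_s$ is precisely the pull‑back of the corresponding trigonometric $1$-form in $g_s$ and $p_s$ under $t=i\sigma$. Indeed, from the definitions one reads off $\sin(i\sigma)=i\sh\sigma$, $\cos(i\sigma)=\ch\sigma$, $\tan(i\sigma)=i\th\sigma$, $\cot(i\sigma)=-i\cth\sigma$, $\csc(i\sigma)=-i\csch\sigma$ and $\sec(i\sigma)=\sech\sigma$, together with $d(i\sigma)=i\,d\sigma$. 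So I would regard the right‑hand sides of the trigonometric theorems as holomorphic functions of a complex variable $Y$ near $0$, and simply evaluate the known identities at $Y=iy$, tracking the resulting powers of $i$.

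For $\gs$ I would first record the pull‑back $g_s(i\sigma)=-\tlg_s(\sigma)$, valid for every $s\ge1$: when $s=1$ the single factor $\tan$ supplies the sign, and when $s\ge2$ the two outer forms $d(i\sigma)=i\,d\sigma$ contribute $i^2=-1$ while each interior $\cot(i\sigma)\,d(i\sigma)=\cth\sigma\,d\sigma$ is sign‑free. Hence the $d$ blocks $g_{s_1}\circ\cdots\circ g_{s_d}$ together yield $(-1)^d$, the innermost form $b_n(\sin T)\,dT$ contributes one factor of $i$ under $T=i\sigma$ (becoming $b_n(i\sh\sigma)\,d\sigma$), and the outer derivative obeys $d/dY=-i\,d/dy$. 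Since $(-i)\cdot i=1$, these collapse to exactly
\begin{equation*}
\gs(\bfs;i\sh y)_n=(-1)^d\frac{d}{dy}\int_0^y \tlg_{s_1}\circ\cdots\circ\tlg_{s_d}\circ b_n(i\sh t)\,dt,
\end{equation*}
as claimed.

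For $\ga$ the same pull‑back gives $p_s(i\sigma)=-\tlp_s(\sigma)$: the leading $\tan$ factor produces the single sign, while $\csc(i\sigma)\,d(i\sigma)=\csch\sigma\,d\sigma$ and $\sec(i\sigma)=\sech\sigma$ show that the hybrid block $1-\csc t\,dt\circ\sec t$ transforms into its hyperbolic analogue with no extra sign; thus the $d$ blocks again contribute $(-1)^d$. The innermost form now carries an extra $\tan$, so under $T=i\sigma$ it becomes $-a_n(i\sh\sigma)\th\sigma\,d\sigma$, supplying one further factor of $-1$. Finally the prefactor transforms as $\cot Y\cdot d/dY=(-i\cth y)(-i\,d/dy)=-\cth y\,d/dy$. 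Multiplying the integrand sign $(-1)^d\cdot(-1)$ by the prefactor sign $-1$ returns $(-1)^d\cth y\,d/dy$, matching the statement.

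The one genuinely delicate point will be justifying the continuation $Y\mapsto iy$. The forms $\cot t,\csc t$ (and their images $\cth t,\csch t$) have a pole at the base point $t=0$, but the iterated structure of $g_s$ and $p_s$ forces enough vanishing of the inner integrals that each regularized integral is in fact holomorphic in the upper limit throughout the open disk $|Y|<r$ for any $\psi<r<\pi/2$, where $\psi=\sh^{-1}1=\log(1+\sqrt2)$; inside this disk the remaining poles of $\tan,\cot,\csc,\sec$ (at $\pm\pi/2$ and $\pm\pi$) are excluded. I would therefore argue that both sides of Theorems~\ref{thm-gs-Akhilesh} and~\ref{thm-ga} extend holomorphically across $0$ and agree on the real diameter $(-r,r)$, hence agree throughout the disk by the identity theorem, in particular at $Y=iy$ for all $|y|\le\psi$; the boundary cases $y=\pm\psi$, where $|i\sh y|=1$, follow by Abel summation since the series $\gs(\bfs;i\sh y)_n$ and $\ga(\bfs;i\sh y)_n$ are then alternating (the factor $(i\sh y)^{2n}=(-1)^n(\sh y)^{2n}$) and convergent. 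As a fall‑back one can avoid continuation entirely and reprove both formulas directly by the telescoping argument of Theorem~\ref{thm-All-bn-it}, differentiating the hyperbolic right‑hand sides and matching the $n$-tail differences $u_n-u_{n+1}=v_n-v_{n+1}$; in either route the sign bookkeeping above is the step most prone to error, so I would re‑verify each factor of $i$ against a low‑depth numerical check.
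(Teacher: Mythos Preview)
Your proposal is correct and follows essentially the same approach as the paper: both derive the hyperbolic identities from the trigonometric ones (Theorems~\ref{thm-gs-Akhilesh} and~\ref{thm-ga}) via the substitution $y\mapsto iy$, using $\sin(it)=i\sh t$, $\cos(it)=\ch t$ to compute $g_s(it)=-\tlg_s(t)$ and $p_s(it)=-\tlp_s(t)$ and then tracking the resulting powers of $i$. In fact you are more thorough than the paper, which omits the $\ga$ case entirely and does not discuss the analytic continuation; your sign bookkeeping, justification via the identity theorem on a disk of radius between $\psi$ and $\pi/2$, and fallback telescoping argument are all sound additions.
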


\begin{proof}
By the substitution $y\to iy$ in Theorem~\ref{thm-gs-Akhilesh} we obtain immediately
\begin{align*}
\gs(\bfs;i\sh y)_n:=&\, \sum_{n_1 > \cdots > n_d>n}
\frac{b_{n_1}(i\sh y)}{(2n_1)^{s_1}\cdots (2n_d)^{s_d}} \\
=&\, -i\frac{d}{dy} \int_0^y g_{s_1}(it)\circ \cdots \circ g_{s_d}(it)\circ b_n(i\sh t) \,d(it),
\end{align*}
where we have used the fact that $\sin it=i\sh t$ and $\cos it=\ch t$. Note that
\begin{align*}
g_s(i t)= &
\left\{
 \begin{array}{ll}
 \tan (it)\, d(it), \quad & \hbox{if $s=1$;} \\
 d(it) \circ (\cot (it)\, d(it))^{s-2}\circ d(it), \ \, \quad\quad & \hbox{if $s\ge 2$}
 \end{array}
\right. \\
= &
\left\{
 \begin{array}{ll}
 -\th t\, dt, \quad & \hbox{if $s=1$;} \\
 -dt \circ (\cth t\, dt)^{s-2}\circ dt, \qquad\qquad \qquad & \hbox{if $s\ge 2$.}
 \end{array}
\right.
\end{align*}
The claim for $\gs(\bfs;i\sh y)_n$ follows immediately. The proof of the claim for $\ga(\bfs;i\sh y)_n$ is completely similar so we omit it here.
\end{proof}

By combining Theorem~\ref{thm-gs-Akhilesh} and Theorem~\ref{thm-gsbar1-ItInt} we can express all possible alternating Ap\'ery-type inverse binomial series by iterated integrals. Moreover, if $(s_1,\eta_1)\ne 1$ then $\gs(\bfs;\bfeta;1)$ can be expressed as a $\Q[i,\sqrt{2}]$-linear combinations of CMZVs of level 8. We are interested in extending
this result to all odd-indexed sums so we first consider the alternating form of \eqref{bn-it1}--\eqref{bn-it6}.
Write
\begin{align*}
b^+_n(z):=\frac{4^nz^{2n}}{\binn}, \quad b^-_n(z)=(-1)^nb_n(z),
 \quad \text{and}\quad b^-_n=b^-_n(1)\quad \forall n\ge 0.
\end{align*}
Define
\begin{align*}
f_{\pm 1}(t) :=1,\ &\, f_{\pm 2}(t):= \frac{t}{\sqrt{1\mp t^2}},\
f_{\pm 3}(t)=\frac1t, \ f_{\pm 20}(t):= \frac{1}{t(1\mp t^2)}, \ f_{\pm 5}(t)=t,\\
&\,\om_{\pm 1}:=\frac{dt}{\sqrt{1\mp t^2}},\qquad
\om_{\pm j}:=f_{\pm j}(t)\cdot \om_{\pm 1} \quad \forall j\ge 2.
\end{align*}
Then one can show that $\om_{\pm 20}=\om_0\mp \om_{\pm2}$ and by \eqref{bn-it1}--\eqref{bn-it6}
\begin{align}
\sum_{n_1>n} \frac{ b^\pm_{n_1}(z)}{2n_1} = &\,\pm f_{\pm 2}(z)\int_0^z b^\pm_{n}(t) \om_{\pm 1}, \label{bn-ALT-it1} \\
 \sum_{n_1>n} \frac{ b^\pm_{n_1}(z)}{(2n_1)^s}=&\, \pm f_1(z) \int_0^z \om_0^{s-2} \om_{\pm 1}\, b^\pm_{n}(t)\om_{\pm 1}\quad\forall s\ge 2,\label{bn-ALT-it2} \\
 \sum_{n_1\ge n} \frac{ b^\pm_{n_1}(z)}{2n_1+1}=&\,f_{\pm20}(z)\int_0^z b^\pm_{n}(t) \om_{\pm1} , \label{bn-ALT-it3} \\
 \sum_{n_1\ge n} \frac{ b^\pm_{n_1}(z)}{(2n_1+1)^s}=&\,f_3(z)\int_0^z \om_0^{s-2} \om_{\pm3}\, b^\pm_{n}(t) \om_{\pm1} \quad\forall s\ge 2, \label{bn-ALT-it4} \\
 \sum_{n_1>n} \frac{ b^\pm_{n_1}(z)}{2n_1-1} = &\, \pm f_5(z) \int_0^z \om_{\pm3}\, b^\pm_{n}(t) \om_{\pm1}\pm f_{\pm2}(z) \int_0^z b^\pm_{n}(t) \om_{\pm1}, \label{bn-ALT-it5} \\
 \sum_{n_1>n} \frac{ b^\pm_{n_1}(z)}{(2n_1-1)^{s}} = &\,\pm f_5(z) \int_0^z (\om_0+1) \om_0^{s-2} \om_{\pm3}\, b^\pm_{n}(t) \om_{\pm1}\quad\forall s\ge 2. \label{bn-ALT-it6}
\end{align}

We illustrate this with an example. For the proof of the above formulas in general we refer the interested reader to \cite{XuZhao2022}.
\begin{ex}
In depth two, using \eqref{bn-ALT-it1} and \eqref{bn-ALT-it3} successively we get
\begin{align*}
S:=&\, \sum_{n>m\ge 0}\frac{(-1)^n b_{n}}{(2n)(2m+1)}
=\frac{-1}{\sqrt{2}} \int_0^1 \sum_{m\ge 0} \frac{b^{-}_m(t)}{2m+1} \om_{-1}
=\frac{-1}{\sqrt{2}} \int_0^1 \om_{-20} \om_{-1}.
\end{align*}
By the change of variables $t\to \sqrt{2}t/\sqrt{1+t^4}$ we see that
\begin{alignat*}{6}
\om_0:=&\frac{dt}{t} & \, \quad\to & \quad \quad \frac{(1-t^4)\, dt}{t(1+t^4)}
 &=&\, \ta+\frac12 \sum_{j=1}^4 \tx_{\mu_j}, \\ 
\om_{-1}:=&\frac{dt}{\sqrt{1+t^2}} & \, \quad\to & \quad \frac{\sqrt{2} (1-t^2) dt}{1+t^4}
 &=&\, \frac{\sqrt{2}}{4}\sum_{j=1}^4 (\mu_j-\mu_j^3)\tx_{\mu_j}, \\ 
\om_{-2}:=&\frac{t\,dt}{1+t^2} & \, \quad\to & \quad \frac{2 (1-t^2) dt}{(1+t^2)(1+t^4)}
 &=&\, \frac12\sum_{j=1}^4 \tx_{\mu_j}-\tx_i-\tx_{-i}, 
\end{alignat*}
where $\mu_j=\exp((2j-1)\pi i/4)$ ($1\le j\le 4$) are the four 8th roots of unity satisfying $x^4+1=0$. Thus
\begin{align*}
S=&\, -\frac{1}{4}\int_0^1 \left(\ta+\tx_i+\tx_{-i}\right)
 \left( \sum_{j=1}^4 (\mu_j-\mu_j^3)\tx_{\mu_j} \right)\\
= &\,-\frac{1}{4} \sum_{j=1}^4 (\mu_j-\mu_j^3) \Li_{2}(\mu_j^{-1})
-\frac{1}{4}\sum_{j=1}^4\sum_{\eps=\pm i} (\mu_j-\mu_j^3) \Li_{1,1}(\eps^{-1},\eps/\mu_j)\\
=&\, \frac{\sqrt{2}}4 \Big(\log 2 \log\nu - \log^2\nu - 2 \Li_2(\nu^{-1})\Big)-\frac{\sqrt{2}\pi^2}{96}
 \approx -0.53464318757
\end{align*}
by Au's Mathematica package \cite{Au2020}.
\end{ex}

Now we consider the alternating form of \eqref{an-iterate1}--\eqref{an-iterate4}. By the change of variables $y\to \sh^{-1}(iz)$
we can easily get
\begin{align}
 \sum_{m>n} \frac{a^-_m(z)}{(2m)^s} =&\, \int_0^z \om_0^{s-1} \Big(\om_{-3}\circ \frac{1}{\sqrt{1+t^2}}-1\Big)a^-_n(t) \om_{-2},\label{an-ALT-it1}\\
\sum_{m\ge n} \frac{a^-_m(z)}{(2m+1)^s} =&\,\frac1z\int_0^z \om_0^{s-1} \Big(\frac1t+\om_{-1}\circ\frac{1}{\sqrt{1+t^2}}\Big) a^-_n(t) \om_{-2},\label{an-ALT-it3}\\
\sum_{m>n} \frac{a^-_m(z)}{2m-1} =&\,-\sqrt{1+z^2}\int_0^z a^-_n(t) \frac{t\, dt}{(1+t^2)^{3/2}}, \label{an-ALT-it4}\\
\sum_{m>n} \frac{a^-_m(z)}{(2m-1)^s}
=&\,z \int_0^z \om_0^{s-2} \frac{\sqrt{1+t^2}\, dt}{t^2} a^-_n(t) \frac{t\, dt}{(1+t^2)^{3/2}} \quad (s\ge2).\label{an-ALT-it5}
\end{align}

\begin{ex}
Using \eqref{an-ALT-it3} and \eqref{an-ALT-it1}, we get
\begin{align*}
 \sum_{n\ge m>0}\frac{(-1)^n a_n}{(2n+1)(2m)}
=&\, \int_0^1 \Big(\frac1t-\om_{-1}\circ \frac{1}{\sqrt{1+t^2}}\Big) \sum_{m>0}\frac{a^-_m(t)}{2m} \om_{-2}\\
=&\, \int_0^1 \Big(\frac1t-\om_{-1}\circ \frac{1}{\sqrt{1+t^2}}\Big)\om_{-2}\Big(\om_{-3}\circ \frac{1}{\sqrt{1+t^2}}-1\Big) \om_{-2}\\
=&\, \int_0^1 \Big(\frac{dt}{1+t^2}-\om_{-1}\circ\frac{t\, dt}{(1+t^2)^{3/2}}\Big) \Big(\om_{-3}-\om_{-20}-\om_{-2}\Big) \\
=&\, \frac{1}{\sqrt{2}} \int_0^1 \om_{-1} \big(\om_{-3}-\om_{-20}-\om_{-2}\big).
\end{align*}
Set $\mu=\exp(2\pi i/8)$. By the change of variables $t\to i(1-t^2)/(1+t^2)$ we get
\begin{align*}
&\, \sum_{n\ge m>0}\frac{(-1)^n a_n}{(2n+1)(2m)}
= \frac{1}{\sqrt{2}} \int_{1}^{\mu} \td_{i,-i}\big(2\tx_{-1}-\tx_{-i}-\tx_{i}\big) \\
=&\, \frac{1}{\sqrt{2}} \left(\int_{1}^0 \td_{i,-i}\big(2\tx_{-1}-\tx_{-i}-\tx_{i}\big)
+\int_{0}^{\mu} \td_{i,-i}\int_{1}^0\big(2\tx_{-1}-\tx_{-i}-\tx_{i}\big)  \right.\\
&\, \ \hskip5cm \left. +\int_{0}^{\mu}\td_{i,-i}\big(2\tx_{-1}-\tx_{-i}-\tx_{i}\big) \right)\\
=&\, \frac{1}{\sqrt{2}} \left(\int_0^1 \big(2\tx_{-1}-\tx_{-i}-\tx_{i}\big) \td_{i,-i}
-\int_{0}^{1}\td_{i/\mu,-i/\mu} \int_0^1\big(2\tx_{-1}-\tx_{-i}-\tx_{i}\big) \right.\\
&\, \ \hskip5cm \left. +\int_{0}^{1} \td_{i/\mu,-i/\mu}\big(2\tx_{-1/\mu}-\tx_{-i/\mu}-\tx_{i/\mu}\big) \right) \\
=&\, \frac{\pi^2}{12}+ 2\log2\log(1 + \sqrt2) - \frac32 \log^2(1 + \sqrt2) - 2\Li_2(\sqrt2-1)
\approx -0.05598682674
%
\end{align*}
by using Au's Mathematica package \cite{Au2020}.
\end{ex}

We will conclude this section by extending our questions on page \pageref{question}:

\medskip\noindent
{\bf Question 3.}
Let $d\in \N$, $\bfs=(s_1,\dots,s_d)\in\N^d$ and $\bfeta=(\eta_1,\dots,\eta_d)\in \{\pm1\}^d$.
Let $l_j(n)=2n$ or $l_j(n)=2n\pm 1$ for all $1\le j\le d$. Suppose $z=\sqrt{j}/2$ for $j=1,2,4$. Is there a level $N$ such that
\begin{align*}
 \sum_{n_1 \succ n_2 \succ\, \cdots \succ n_d\succ\,0}
 \frac{a_{n_1}z^{2n_1}\eta_1^{n_1}\cdots \eta_d^{n_d}}{l_1(n_1)^{s_1}\cdots l_d(n_d)^{s_d}}\in\CMZV^N\otimes\Q[\exp(2\pi i/N)],\\
 \sum_{n_1 \succ n_2 \succ\, \cdots \succ n_d\succ\,0}
 \frac{b_{n_1}z^{2n_1}\eta_1^{n_1}\cdots \eta_d^{n_d}}{l_1(n_1)^{s_1}\cdots l_d(n_d)^{s_d}}\in\CMZV^N\otimes\Q[\exp(2\pi i/N)]
\end{align*}
if the sums converge? Here ``$\succ$'' can be either ``$\ge$'' or ``$>$''. One may also ask the same question when
$a_n$ and $b_n$ are replaced by their squares while adding an extra factor of $1/\pi$ on the
right-hand side for $a_n^2$ version. Available evidence seems to suggest that $N$ is at least 120.

\section{Ap\'ery-like sums via fibrations of iterated integrals}
In \cite{CampbellGZ2024}, Campbell et al. solved an open problem posed by H. Chen in his book \cite{HJChen2013} by evaluating the following two Ap\'ery-like sums explicitly and showed that
\begin{equation*}
 \sum_{n=0}^\infty \frac{1}{(2n+1)^3\binom{2n}{n}}\in i \CMZV_3^{12}, \qquad
 \sum_{n=0}^\infty \frac{(-1)^n}{(2n+1)^3\binom{2n}{n}}\in  \CMZV_3^{10}.
\end{equation*}
In fact, they were able to show that
\begin{equation*}
\sum_{n=0}^\infty \frac{4^n\sin^{2n+1}(2m\pi/N)}{(2n+1)^k\binom{2n}{n}}\in i \CMZV_k^{\rm{lcm}(2,N)}\quad \forall k\ge 2,N\ge 3,m\in \Z.
\end{equation*}
The main idea to evaluate these is use a variation of \eqref{equ:fn} in the form of
\begin{align*}
 \int_{0}^{1} \bigg( \frac{x}{1 + x^2} \bigg)^{2n+1} \frac{d x}{1+x^2}
 = \frac{1}{ 4 (2 n+1) \binom{2 n}{n}}
\end{align*}
together with
$$
\sum_{n=0}^{\infty} \frac{z^{2n+1}}{(2n+1)^k} = \frac{\Li_k(z)-\Li_k(-z)}{2} ,
$$
to derive (for any $k\ge 3$ and $w$ such that $ \Im(i w)>0$ and $ \Im(i /w)>0$)
\begin{align*}
&\sum _{n=0}^{\infty } \frac{(-1)^n }{(2 n+1)^k \binom{2 n}{n}}\biggl(\frac{1-w^2}{w}\biggr)^{2 n+1}\\
=&{}2i \int_0^1 \frac{ \Li_{k-1}\bigl(\frac{1-w^2}{i w}\frac{x}{1+x^2}\bigr)-\Li_{k-1}\bigl(-\frac{1-w^2}{i w}\frac{x}{1+x^2}\bigr)}{1+x^2} \,dx\\
=&{}\frac{2i }{(k-2)!}\int_{i w}^i \frac{\log^{k-2}\bigl( \frac{1-w^{2}}{i w}\frac{z}{1+z^{2}} \bigr)}{1+z^{2}}\log\frac{z}{i }\,dz +\frac{2i }{(k-2)!}\int_{i }^{i /w}\frac{\log^{k-2}\bigl(-\frac{1-w^{2}}{i w} \frac{z}{1+z^{2}} \bigr)}{1+z^{2}}\log\frac{z}{i }\,dz.
\end{align*}
This last expression can be treated using the generalized polylogarithms \cite{FTW2016} (also called hyperlogarithms in \cite{Panzer2015}) when $w=\exp(2\pi m i/N)$ after noticing that
\begin{align*}
&{}\log^k\bigg(\frac{1-w^{2}}{w}\bigg)\in \CMZV_{k}^{{\rm lcm}(2,N)}.
\end{align*}
The key step in the process of computing the generalized polylogarithms is to convert various complicated iterated integrals to those that can be seen easily to lie inside $\CMZV$, which is called the fibration in \cite{SunZhou2024sum3k4k} since one of the major programs in Panzer's package \cite{Panzer2014c,Panzer2015} is named fibrationBasis which is partially motivated by the ideas of Brown \cite{Brown2009a,Brown2009c}.

Combining with Au's algorithm \cite{Au2020}, Campbell et al. further proved the following in \cite{CampbellGZ2024}.
\begin{thm}
For positive integer $k\ge 2$, set
\begin{align*}
\calS_{k}(z):=\sum_{n=0}^{\infty} \frac{z^{n}}{ (2n+1)^k \binom{2n}{n}}.
\end{align*}
Then the following relations hold true:
\begin{alignat*}{6}
\calS_{k}\biggl( -\frac{9}{4} \biggr)\in &  \CMZV_k^6, &
\quad \calS_{k}(-4)\in & \CMZV_k^8,&
\quad \calS_{k}\biggl(-\frac12\biggr)\in & \sqrt{2}\CMZV_k^8, \\
\calS_{k}(-1)\in & \CMZV_k^{10}, &
\quad \calS_k\biggl( -\frac{16}{5} \biggr)\in & \sqrt{5}\CMZV_k^{10},&
\quad \calS_k\biggl( -\frac{4}{3} \biggr)\in & \sqrt{3}\CMZV_k^{12}.
\end{alignat*}
\end{thm}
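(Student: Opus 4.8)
The plan is to apply, to each of the six arguments separately, the integral-to-hyperlogarithm reduction sketched immediately above the statement. Writing the target as $z=-c^2$ with $c>0$, the beta-type identity $\frac1{4(2n+1)\binom{2n}{n}}=\int_0^1\big(\frac{x}{1+x^2}\big)^{2n+1}\frac{dx}{1+x^2}$ together with $\sum_{n\ge0}\frac{u^{2n+1}}{(2n+1)^{k-1}}=\tfrac12\big(\Li_{k-1}(u)-\Li_{k-1}(-u)\big)$ gives
\[
\calS_k(-c^2)=\frac{2i}{c}\int_0^1\frac{\Li_{k-1}\big(\tfrac{c}{i}\tfrac{x}{1+x^2}\big)-\Li_{k-1}\big(-\tfrac{c}{i}\tfrac{x}{1+x^2}\big)}{1+x^2}\,dx.
\]
The first bookkeeping step is to record $c$ and the prefactor $1/c$ in each case: the values $c=\tfrac32,2,\tfrac1{\sqrt2},1,\tfrac4{\sqrt5},\tfrac2{\sqrt3}$ produce $z=-\tfrac94,-4,-\tfrac12,-1,-\tfrac{16}5,-\tfrac43$ and $1/c=\tfrac23,\tfrac12,\sqrt2,1,\tfrac{\sqrt5}4,\tfrac{\sqrt3}2$. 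Since $1/c\in\Q$ in the first, second and fourth cases while $1/c\in\sqrt2\,\Q,\ \sqrt5\,\Q,\ \sqrt3\,\Q$ in the other three, this already accounts for the prefactors $\sqrt2,\sqrt5,\sqrt3$ attached to $\CMZV^8_k,\CMZV^{10}_k,\CMZV^{12}_k$ in the statement.

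Next I would turn the $x$-integral into an iterated integral by the fibration method: expand $\Li_{k-1}$ as an iterated integral in its argument and pull back along $x\mapsto\frac{x}{1+x^2}$ using Chen's Lemma~\ref{lem:chen's}. The resulting $1$-forms $d\log x$, $d\log(1+x^2)$ and $d\log\big(1\mp\frac{c}{i}\frac{x}{1+x^2}\big)$ have their singularities exactly at $\{0,\pm i,\pm iw,\pm i/w\}$, where $w\in(0,1)$ is the real number with $c=\frac1w-w$; indeed $\pm iw$ and $\pm i/w$ are precisely the four roots of $x^2\pm icx+1=0$. For the six cases one finds $w=\tfrac12,\ \sqrt2-1,\ \tfrac1{\sqrt2},\ \tfrac{\sqrt5-1}2,\ \tfrac1{\sqrt5},\ \tfrac1{\sqrt3}$, so the alphabet is defined over $\Q(i,w)$, equal to $\Q(i),\ \Q(\zeta_8),\ \Q(\zeta_8),\ \Q(i,\sqrt5),\ \Q(i,\sqrt5),\ \Q(\zeta_{12})$ respectively.

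The heart of the argument, and the step that fixes the levels $6,8,10,12$, is to realize this alphabet inside a cyclotomic one. In the level-$8$ and level-$12$ cases the splitting field is already $\Q(\zeta_8)$ resp.\ $\Q(\zeta_{12})$, so after a suitable quadratic change of variables of the type $t\mapsto\frac{1-t^2}{1+t^2}$ used repeatedly in the preceding section the forms become $dt/(\zeta-t)$ with $\zeta$ an $N$-th root of unity, and the renormalized iterated integral lies in $\CMZV^N_k$ by Chen's lemma and Theorem~\ref{thm-NUnitalU(0)}. I expect the main obstacle to be the level-$6$ and level-$10$ cases, where $i\notin\Q(\zeta_6),\Q(\zeta_{10})$, so that the apparent $\pm i$ singularities (and, for level $10$, the mismatch between $\Q(i,\sqrt5)$ and $\Q(\zeta_{10})=\Q(\zeta_5)$) must disappear after reduction. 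Here one must exploit that the integrand is built from the \emph{odd} combination $\Li_{k-1}(v)-\Li_{k-1}(-v)$ against the symmetric kernel $\frac{dx}{1+x^2}$, which forces enough cancellation, via the shuffle relations and the functional equations among multiple polylogarithms at these special algebraic points, to descend the value into $\CMZV^6_k$ resp.\ $\CMZV^{10}_k$.

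Finally I would run the resulting iterated integrals through Au's algorithm \cite{Au2020} to rewrite each on a $\Q$-basis of $\CMZV^N_k$; this simultaneously certifies the claimed membership and outputs the closed forms. The whole scheme is uniform in $k$: only the weight of the $\Li_{k-1}$ block, and hence the number of $d\log$ forms, grows, while the alphabet, and therefore the level, is independent of $k$.
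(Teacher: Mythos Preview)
Your outline matches the approach the paper attributes to \cite{CampbellGZ2024}: the same beta-integral identity, the same reduction to an integral of $\Li_{k-1}$ against $\frac{dx}{1+x^2}$, the same parametrisation $c=\frac1w-w$, and the same end game of fibration followed by Au's algorithm. The paper itself does not supply a proof of this theorem; it only sketches the method and defers to the reference, so at the level of strategy you are aligned with what the paper records.

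Where your proposal has a real gap is the step you call ``fixing the levels''. The appeal to Theorem~\ref{thm-NUnitalU(0)} is misplaced: the function $U(x)=\pm\frac{c}{i}\frac{x}{1+x^2}$ is never $N$-unital for the $c$'s in question, because the zeros of $1-U$, namely $\pm iw$ and $\pm i/w$, lie off the unit circle. Likewise, knowing only that the splitting field of the alphabet is $\Q(\zeta_8)$ or $\Q(\zeta_{12})$ does not by itself produce a change of variables sending $\{0,\pm i,\pm iw,\pm i/w\}$ to $N$th roots of unity; with $w=\sqrt2-1$, say, you still have to explain how an iterated integral with singularities at $\pm i(\sqrt2\pm1)$ becomes a level-$8$ CMZV, and the generic substitution $t\mapsto(1-t^2)/(1+t^2)$ does not accomplish this. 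The fibration method you invoke is exactly the tool that handles such non-cyclotomic alphabets, but it is considerably more than Chen's lemma plus one coordinate change: one has to rewrite the hyperlogarithms on an explicit fibration basis as in \cite{Panzer2015,Au2022} and then check that the output lands in the predicted $\CMZV_k^N$. For levels $6$ and $10$ you correctly flag the obstruction $i\notin\Q(\zeta_6),\Q(\zeta_{10})$, but ``cancellation from the odd combination'' is not yet an argument; that descent is the actual content of the proof in \cite{CampbellGZ2024} and has to be carried out case by case via the hyperlogarithm reductions, not asserted.
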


By applying similar ideas, Y. Zhou \cite{Zhou2023} showed that for all $k\in\N$
\begin{alignat*}{4}
\sum_{n_1\ge n_2\ge 0}  \frac{ \binom{2n_1}{n_1} }{ 8^{n_1} n_2^k }\in & \sqrt{2}\CMZV_k^8,\qquad &
\sum_{2 n_1\ge n_2\ge 0}  \frac{ \binom{2n_1}{n_1} }{ 8^{n_1} n_2^k }\in & \sqrt{2}\CMZV_k^8,\\
\sum_{n_1\ge n_2\ge 0}  \frac{ \binom{2n_1}{n_1} }{ 16^{n_1} n_2^k }\in & \sqrt{3}\CMZV_k^{12},\qquad &
\sum_{2 n_1\ge n_2\ge 0}  \frac{ \binom{2n_1}{n_1} }{ 16^{n_1} n_2^k }\in & \sqrt{3}\CMZV_k^{12} .
\end{alignat*}
We remark that the method of fibration can be applied to some other Ap\'ery-like sums involving binomial coefficients
$\binom{3n}{n}$, $\binom{4n}{2n}$ and $\binom{6n}{3n}$. We refer the interested readers to \cite{Au2022,CampbellGZ2024,SunZhou2024sum3k4k,Zhou2023} for more details.

\section{Ap\'ery-like sums via contour integrals}
Let  $H_n^{(p)}$ be the generalized $n$th-harmonic number of order $p$ defined by
\[
H_n^{(p)}:=\sum_{k=1}^n \frac 1{k^p},\quad H_n\equiv H_n^{(1)} \quad\text{and}\quad H_0^{(p)}:=0.
\]
Motivated by Flajolet and Salvy's paper \cite{FS1998}. Flajolet and Salvy \cite{FS1998} introduced and used the method of contour integrals and residue computations to study the following kind of special Dirichlet series that involve harmonic numbers
\begin{align}\label{a0}
{S_{{\bf p},q}} := \sum\limits_{n = 1}^\infty  {\frac{{H_n^{\left( {{p_1}} \right)}H_n^{\left( {{p_2}} \right)} \cdots H_n^{\left( {{p_r}} \right)}}}
{{{n^q}}}},
\end{align}
we call them \emph{generalized (non-alternating) Euler sums}. Moreover, if $r>1$ in (\ref{a0}), they were called \emph{nonlinear Euler sums}. Here ${\bf p}:=(p_1,p_2,\ldots,p_r)\ (r,p_i\in \N, i=1,2,\ldots,r)$ with $p_1\leq p_2\leq \ldots\leq p_r$ and $q\geq 2$. The quantity $w:={p _1} +  \cdots  + {p _r} + q$ is called the weight and the quantity $r$ is called the degree (order).
By the well-known ``harmonic product" (also called ``quasi-shuffle product") \cite{Hoffman1997}, we see that the generalized Euler sums ${S_{{\bf p},q}}$ can be expressed as a rational linear combinations of multiple zeta values. See \cite{XW2021} for the explicit formula of generalized Euler sums via multiple zeta values.

Flajolet and Salvy \cite{FS1998} defined a kernel function $\xi \left( s \right)$ by the two requirements: 1. it is meromorphic on the whole complex plane. 2. it satisfies $\xi \left( s \right)=o(s)$ over an infinite collection of circles $\left| s \right| = {\rho _k}$ with ${\rho _k} \to \infty $. Applying these two conditions of kernel
function $\xi \left( s \right)$, Flajolet and Salvy discovered the following residue lemma.
\begin{lem} \emph{(cf. \cite{FS1998})} \label{lem-residue}
Let $\xi \left( s \right)$ be a kernel function and let $r(s)$ be a rational function which is $O(s^{-2})$ at infinity. Then
\begin{align}\label{3.1}
\sum\limits_{\alpha  \in O} {{\mathop{\rm Res}}{{({r\left( s \right)\xi \left( s \right)}, \alpha  )}}}  + \sum\limits_{\beta  \in S}  {{\mathop{\rm Res}}{{( {r\left( s \right)\xi \left( s \right)}, \beta  )}}}  = 0,
\end{align}
where $S$ is the set of poles of $r(s)$ and $O$ is the set of poles of $\xi \left( s \right)$ that are not poles $r(s)$. Here ${\mathop{\rm Re}\nolimits} s{\left[ {r\left( s \right)},s = \alpha \right]} $ denotes the residue of $r(s)$ at $s= \alpha$.
\end{lem}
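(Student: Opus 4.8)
The plan is to apply the classical residue theorem to the meromorphic function $r(s)\xi(s)$ on a sequence of expanding circles and then let their radii tend to infinity. First I would choose the radii $\rho_k$ so that each circle $C_k:=\{|s|=\rho_k\}$ avoids every pole of $r(s)$ and of $\xi(s)$; since both functions have only isolated poles this is possible, and the hypothesis $\xi(s)=o(s)$ is already phrased along such a sequence. For each $k$ the residue theorem gives
\begin{equation*}
\frac{1}{2\pi i}\oint_{C_k} r(s)\xi(s)\,ds=\sum_{\gamma}\Res\big(r(s)\xi(s),\gamma\big),
\end{equation*}
where $\gamma$ runs over all poles of the product $r(s)\xi(s)$ lying inside $C_k$.

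Next I would identify the pole set. Every pole of the product $r(s)\xi(s)$ is either a pole of $r(s)$ or a pole of $\xi(s)$; the former constitute $S$ and the latter, after removing those already counted in $S$, constitute $O$. Thus $S\cup O$ is exactly the set of poles of $r(s)\xi(s)$, and for $\rho_k$ large enough every such pole lies inside $C_k$ (here one uses that $r(s)=O(s^{-2})$ at infinity, so $r$ has only finitely many poles, while the poles of $\xi$ may be infinite in number but are captured in the limit $k\to\infty$). Hence the right-hand side above converges to $\sum_{\alpha\in O}\Res(r\xi,\alpha)+\sum_{\beta\in S}\Res(r\xi,\beta)$.

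The crucial estimate is the vanishing of the contour integral. On $C_k$ the hypotheses give $r(s)=O(\rho_k^{-2})$ and $\xi(s)=o(\rho_k)$, so $r(s)\xi(s)=o(\rho_k^{-1})$ uniformly on $C_k$; since the length of $C_k$ is $2\pi\rho_k$, the standard $ML$-bound yields
\begin{equation*}
\left|\frac{1}{2\pi i}\oint_{C_k} r(s)\xi(s)\,ds\right|\le \frac{1}{2\pi}\cdot 2\pi\rho_k\cdot o(\rho_k^{-1})=o(1)\longrightarrow 0.
\end{equation*}
Combining this with the residue evaluation and letting $k\to\infty$ forces the total sum of residues to vanish, which is exactly \eqref{3.1}.

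The main obstacle I anticipate is making the decay estimate fully rigorous, in particular interpreting the symbol $o(s)$ in the kernel hypothesis as a genuine uniform bound on each circle $C_k$ and checking that this combines with the $O(s^{-2})$ decay of $r$ to beat the linear growth of the circumference. A secondary point requiring care is the bookkeeping at points that are simultaneously poles of $r$ and of $\xi$: the residue of the product there must be assigned to $S$ and not double-counted, which is precisely why $O$ is defined as the poles of $\xi$ that are \emph{not} poles of $r$.
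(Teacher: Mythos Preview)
Your argument is correct and is exactly the standard proof of this residue lemma: apply the residue theorem on the circles $|s|=\rho_k$, use the $ML$-bound together with $r(s)=O(s^{-2})$ and $\xi(s)=o(s)$ to kill the contour integral, and identify the poles of the product as the disjoint union $S\cup O$. The paper itself does not supply a proof here; it simply cites Flajolet--Salvy \cite{FS1998}, whose proof is precisely the one you have outlined.
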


Flajolet and Salvy considered the contour integration involving classical digamma function and used the residue computations to establish more explicit reductions of generalized Euler sums to Euler sums with lower degree.
In particular, they proved the famous theorem that a nonlinear Euler sum $S_{p_1p_2\cdots p_r,q}$ reduces to a combination of sums of lower orders whenever the weight $p_1+p_2+\cdots+p_r+q$ and the order $r$ are of the same parity by considering the kernels
$$\xi_r(s):=\pi \cot(\pi s)\frac{\psi^{(p_1-1)}(-s)\cdots\psi^{(p_r-1)}(-s)}{(p_1-1)!\cdots(p_r-1)!}$$
and the base function $r(s)=s^{-q}$.

\subsection{Main Methods}
By extending Flajolet and Salvy's idea it is not too difficult to discover the following fact that any
polynomial form in
\[\pi\cot\pi s,\quad \frac{\pi}{\sin\pi s},\quad \psi^{(j)}(\pm s),\quad
\frac{\Gamma^2(s+1)}{\Gamma(2s+1)}4^s
\]
is itself a kernel function with poles at a subset of the integers. Because by the Legendre duplication formula
\[
\Gamma(s)\Gamma\smbb{s+\frac{1}{2}}=\sqrt{\pi}\cdot 2^{1-2s}\Gamma(2s)
\]
and the asymptotic expansion for the ratio of two gamma functions
\[
\frac{\Gamma(s+a)}{\Gamma(s+b)}
    =s^{a-b}\smbb{1+O\smbb{\frac{1}{s}}}\,,
    \quad\text{for } |\arg(s)|\leq\pi-\varepsilon\,,\ \varepsilon>0\,,\ |s|\to\infty
\]
(see \cite[Sections 2.3 and 2.11]{Luke69.1}), we have
\begin{align}\label{eq-inf-Gamma}
\frac{\Gamma^2(s+1)}{\Gamma(2s+1)}4^s=\sqrt{\pi s}\smbb{1+O\smbb{\frac{1}{s}}}=o(s)\,,\quad |s|\to\infty\,.
\end{align}
The purpose of this section is precisely to investigate the power of such kernels in connection with summation formulas and Ap\'ery-like sums. We need the following lemmas.

\begin{lem} \emph{(cf. \cite{WangXu2021})}\label{Lem.CD}
For $|s|<1$, the following identities hold:
\[
\Gamma(s+1)\uue^{\gamma s}=\sum_{n=0}^\infty C_n\frac{s^n}{n!}\,,\quad
\{\Gamma(s+1)\uue^{\gamma s}\}^{-1}=\sum_{n=0}^\infty D_n\frac{s^n}{n!}\,,
\]
where $\gamma:=\lim_{n\to\infty}(H_n-\log n)$ is the Euler-Mascheroni constant, and
\begin{align*}
&C_n:=Y_n(0,1!\ze(2),-2!\ze(3),\ldots,(-1)^{n}(n-1)!\ze(n))\,,\\
&D_n:=Y_n(0,-1!\ze(2),2!\ze(3),\ldots,(-1)^{n-1}(n-1)!\ze(n))\,.
\end{align*}
Here $Y_n$ denote \emph{exponential complete Bell polynomials} (see \cite[Section 3.3]{Com74} and \cite[Section 2.8]{Riordan58}) defined by
\begin{equation}\label{cBell.gf}
\exp\smbb{\sum_{k=1}^{\infty}x_k\frac{t^k}{k!}}
    =\sum_{n=0}^{\infty}Y_n(x_1,x_2,\ldots,x_n)\frac{t^n}{n!}\,,
\end{equation}
and satisfy the recurrence
\begin{equation}\label{cBell.rec}
Y_n(x_1,x_2,\ldots,x_n)=\sum_{j=0}^{n-1}\binom{n-1}{j}x_{n-j}Y_j(x_1,x_2,\ldots,x_j)
    \,,\quad n\geq1.
\end{equation}
\end{lem}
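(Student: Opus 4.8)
The plan is to reduce both identities to the classical Taylor expansion of $\log\bigl(\Gamma(s+1)\uue^{\gamma s}\bigr)$ about $s=0$ and then read off coefficients against the defining generating function \eqref{cBell.gf} of the Bell polynomials. First I would recall the Weierstrass product in the form
\begin{equation*}
\frac{1}{\Gamma(s+1)\uue^{\gamma s}}=\prod_{n=1}^\infty\Big(1+\frac{s}{n}\Big)\uue^{-s/n},
\end{equation*}
take logarithms, and expand each factor as $\log(1+s/n)-s/n=\sum_{k\ge2}\frac{(-1)^{k-1}}{k}\frac{s^k}{n^k}$. Summing over $n$ (legitimate by absolute convergence on $|s|<1$, since $\ze(k)^{1/k}\to1$) produces the well-known series
\begin{equation*}
\log\bigl(\Gamma(s+1)\uue^{\gamma s}\bigr)=-\sum_{k\ge2}\frac{(-1)^{k-1}}{k}\ze(k)\,s^k=\sum_{k\ge2}\frac{(-1)^{k}}{k}\ze(k)\,s^k,
\end{equation*}
where the factor $\uue^{\gamma s}$ is exactly what removes the linear term.

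The second step is to exponentiate and cast the exponent into the shape $\sum_{k\ge1}x_k s^k/k!$ required by \eqref{cBell.gf}. Matching $x_k/k!=(-1)^k\ze(k)/k$ for $k\ge2$ together with $x_1=0$ forces $x_k=(-1)^k(k-1)!\,\ze(k)$, so that
\begin{equation*}
\Gamma(s+1)\uue^{\gamma s}=\exp\Big(\sum_{k\ge1}x_k\frac{s^k}{k!}\Big)=\sum_{n=0}^\infty Y_n(x_1,\dots,x_n)\frac{s^n}{n!};
\end{equation*}
comparing the coefficient of $s^n/n!$ then yields $C_n=Y_n\bigl(0,1!\ze(2),-2!\ze(3),\dots,(-1)^n(n-1)!\ze(n)\bigr)$ exactly as stated. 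For the reciprocal identity I would simply negate the exponent, replacing $x_k$ by $x_k'=-x_k=(-1)^{k-1}(k-1)!\,\ze(k)$, and the identical coefficient comparison gives $D_n=Y_n\bigl(0,-1!\ze(2),2!\ze(3),\dots,(-1)^{n-1}(n-1)!\ze(n)\bigr)$.

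There is no genuine obstacle here: the entire content is the $\log$-Gamma expansion fed into the formal identity \eqref{cBell.gf}. The only points deserving a word of care are (i) the absolute convergence of the double series in $n$ and $k$, which is precisely why the statement is restricted to $|s|<1$, and (ii) the observation that passing between $\exp$ and the Bell-polynomial generating series is unambiguous because both sides are the Taylor expansions of functions holomorphic on the disk $|s|<1$, so the two power series must agree coefficientwise.
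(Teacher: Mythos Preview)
Your argument is correct and is the standard route: the Weierstrass product gives the closed form for $\log\bigl(\Gamma(s+1)\uue^{\gamma s}\bigr)$, and matching against the generating function \eqref{cBell.gf} identifies the coefficients as the stated Bell polynomial values. The paper itself does not supply a proof of this lemma; it is quoted from \cite{WangXu2021} and used as a black box, so there is no ``paper's approach'' to compare against beyond noting that your derivation is exactly the classical one underlying the cited result.
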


According to Lemma \ref{Lem.CD}, it is clear that $C_k$ and $D_k$ are rational linear combinations of products of zeta values. In particular, by (\ref{cBell.rec}), we have
$(C_k)_{k\in\mathbb{N}_0}=(1,0,\ze(2),-2\ze(3),\frac{27}{2}\ze(4),\ldots)$ and $(D_k)_{k\in\mathbb{N}_0}=(1,0,-\ze(2),2\ze(3),\frac{3}{2}\ze(4),\ldots)$.
Furthermore, by the relations
\begin{align*}
&\frac{\Gamma(s-n)}{\Gamma(s+1)}
    =\prod_{k=0}^n\frac{1}{s-k}
    =\frac{(-1)^n}{n!}\frac{1}{s}\sum_{m=0}^\infty\ze_n^\star(\{1\}_m)s^m\,,\\
&\frac{\Gamma(s+1)}{\Gamma(s-n)}
    =\prod_{k=0}^n(s-k)
    =(-1)^nn!s\sum_{m=0}^\infty(-1)^m\ze_n(\{1\}_m)s^m\,,
\end{align*}
for $n\geq 0$, the next lemma can be established.

\begin{lem}\label{Lem.AB}
For nonnegative integer $n$, when $s\to-n$, we have
\begin{align*}
&\Gamma(s)\uue^{\gamma(z-1)}
    =\frac{(-1)^n}{n!}\uue^{-\gamma(n+1)}\sum_{k=0}^\infty A_k(n)(s+n)^{k-1}\,,\\
&\frac{1}{\Gamma(s)\uue^{\gamma(s-1)}}
    =(-1)^nn!\uue^{\gamma(n+1)}\sum_{k=0}^\infty B_k(n)(s+n)^{k+1}\,,
\end{align*}
where
\[
A_k(n):=\sum_{\substack{k_1+k_2=k\\k_1,k_2\geq 0}}
    \ze_n^\star(\{1\}_{k_1})\frac{C_{k_2}}{k_2!}\,,\quad
B_k(n):=\sum_{\substack{k_1+k_2=k\\k_1,k_2\geq 0}}
    (-1)^{k_1}\ze_n(\{1\}_{k_1})\frac{D_{k_2}}{k_2!}\,.
\]
\end{lem}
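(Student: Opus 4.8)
The plan is to reduce both expansions to ordinary Taylor/Laurent expansions at the origin by the substitution $u=s+n$, after which the two product formulas displayed immediately before the lemma, together with Lemma~\ref{Lem.CD}, apply verbatim. (I read the exponent in the first identity as $\gamma(s-1)$.) First I would write $s=u-n$, so that $s\to-n$ corresponds to $u\to 0$, and split off the exponential prefactor via $\uue^{\gamma(s-1)}=\uue^{-\gamma(n+1)}\uue^{\gamma u}$. For the first identity this reduces the claim to expanding $\Gamma(u-n)\uue^{\gamma u}$ in powers of $u$.

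Next I would invoke the first product formula with $s$ replaced by $u$, namely $\Gamma(u-n)/\Gamma(u+1)=\frac{(-1)^n}{n!}\frac1u\sum_{m\ge0}\ze_n^\star(\{1\}_m)u^m$, to write $\Gamma(u-n)\uue^{\gamma u}=\frac{(-1)^n}{n!}\frac1u\big(\Gamma(u+1)\uue^{\gamma u}\big)\sum_{m\ge0}\ze_n^\star(\{1\}_m)u^m$. Substituting the power series $\Gamma(u+1)\uue^{\gamma u}=\sum_{j\ge0}C_j u^j/j!$ from Lemma~\ref{Lem.CD} and taking the Cauchy product, the coefficient of $u^k$ in the bracketed product (with $k_1=m$, $k_2=j$) is exactly $A_k(n)=\sum_{k_1+k_2=k}\ze_n^\star(\{1\}_{k_1})C_{k_2}/k_2!$; the prefactor $\tfrac1u$ shifts this to $u^{k-1}$, and restoring $\uue^{-\gamma(n+1)}$ and $u=s+n$ gives the first displayed expansion.

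For the reciprocal I would proceed symmetrically, using the second product formula $\Gamma(u+1)/\Gamma(u-n)=(-1)^n n!\,u\sum_{m\ge0}(-1)^m\ze_n(\{1\}_m)u^m$ together with the reciprocal series $\{\Gamma(u+1)\uue^{\gamma u}\}^{-1}=\sum_{j\ge0}D_j u^j/j!$ from Lemma~\ref{Lem.CD}. This yields $\{\Gamma(u-n)\uue^{\gamma u}\}^{-1}=(-1)^n n!\,u\,\{\Gamma(u+1)\uue^{\gamma u}\}^{-1}\sum_{m\ge0}(-1)^m\ze_n(\{1\}_m)u^m$, and the Cauchy product produces the coefficient $B_k(n)=\sum_{k_1+k_2=k}(-1)^{k_1}\ze_n(\{1\}_{k_1})D_{k_2}/k_2!$ of $u^k$, while the overall factor $u$ raises the exponent to $u^{k+1}$. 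Pulling out $\uue^{\gamma(n+1)}$ and restoring $u=s+n$ gives the second expansion. There is no substantial obstacle here: every ingredient is already established, so the argument is pure bookkeeping through the Cauchy product. The only points requiring genuine care are the correct extraction of the scalar factors $\frac{(-1)^n}{n!}\uue^{-\gamma(n+1)}$ and $(-1)^n n!\,\uue^{\gamma(n+1)}$, and matching the two convolution indices so that $k_1$ attaches to the harmonic sum $\ze_n^\star$ (resp.\ $\ze_n$) and $k_2$ to the Bell-polynomial coefficient $C_{k_2}$ (resp.\ $D_{k_2}$), precisely as in the definitions of $A_k(n)$ and $B_k(n)$.
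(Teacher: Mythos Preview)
Your proposal is correct and follows essentially the same approach as the paper: the paper simply records the two product expansions for $\Gamma(s-n)/\Gamma(s+1)$ and its reciprocal and states that the lemma can be established from them together with Lemma~\ref{Lem.CD}, which is precisely the substitution-plus-Cauchy-product argument you have written out in detail.
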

Here for positive integers $k_1,k_2,\ldots,k_r$, the \emph{multiple harmonic sums} (MHSs) $\ze_n(k_1,k_2,\ldots,k_r)$ are defined by
\begin{align}\label{defn-mhss}
\ze_n(\bfk):=
    \sum_{n\geq n_1>n_2>\cdots>n_r\geq 1}\frac{1}{n_1^{k_1}n_2^{k_2}\cdots n_r^{k_r}}\,.
\end{align}
Additionally, similarly to MHSs, denote the \emph{multiple harmonic star sums} $\ze_n^\star(k_1,k_2,\ldots,k_r)$ by
\begin{align}\label{defn-mhsss}
\ze_n^\star(\bfk):=
    \sum_{n\geq n_1\geq n_2\geq\cdots\geq n_r\geq 1}
    \frac{1}{n_1^{k_1}n_2^{k_2}\cdots n_r^{k_r}}\,.
\end{align}
Here when $n<r$, then ${\zeta_n}({\bf k}):=0$, and ${\zeta _n}(\emptyset )={\zeta^\star _n}(\emptyset ):=1$. When taking the limit $n\rightarrow \infty$ in \eqref{defn-mhss} and \eqref{defn-mhsss} with $k_1>1$, we get the so-called the multiple zeta values and the multiple zeta star values, respectively.

Now, we use the Lemmas \ref{lem-residue} and \ref{Lem.AB} to establish some explicit relations of Ap\'ery-like sums.
For more details we refer the readers to our paper \cite{WangXu2021}.

\subsection{Evaluations of some Ap\'ery-like sums via contour integrals}
Firstly, denote
\begin{equation}
G(\vec{k}):=G(k_1,k_2,k_3,k_4)
    =\frac{2^{k_3+k_4}\log^{k_4}(2)C_{k_1}C_{k_2}D_{k_3}}{k_1!k_2!k_3!k_4!}\,.
\end{equation}
Then we have a new closed form expression.

\begin{thm}\label{Th.Sq}
For integer $q\geq 2$,  we have
\[
\sum_{n=1}^\infty\frac{\binom{2n}{n}}{4^n n^{q-1}}
    =(-1)^q\sum_{k_1+k_2+k_3+k_4=q-1,\atop k_1,\ldots,k_4\geq 0}G(k_1,k_2,k_3,k_4)\,.
\]
\end{thm}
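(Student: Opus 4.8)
The plan is to run the Flajolet--Salvy residue machinery of Lemma~\ref{lem-residue} with a kernel built from the central-binomial gamma factor. Concretely, I would take the base function $r(s)=s^{1-q}$ together with the kernel
\[
\xi(s)=\pi\cot(\pi s)\,\frac{\Gamma(2s+1)}{4^{s}\,\Gamma(s+1)^{2}} .
\]
Since $\Gamma(2s+1)/(4^{s}\Gamma(s+1)^{2})$ is the reciprocal of the factor in \eqref{eq-inf-Gamma}, it is $O(|s|^{-1/2})$, so $\xi(s)=o(s)$ is a legitimate kernel; moreover $r(s)\xi(s)=O(|s|^{1/2-q})=o(|s|^{-1})$ for every $q\ge2$, so the integrals of $r(s)\xi(s)$ over the expanding circles vanish and Lemma~\ref{lem-residue} applies (the product decays fast enough even though $r$ alone is only $O(|s|^{-1})$ when $q=2$).

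Next I would locate the poles of $r(s)\xi(s)$. At a positive integer $s=n$ the factor $\pi\cot(\pi s)$ has a simple pole of residue $1$, while the gamma factor equals $\binn/4^{n}$, so the residue there is $\binn/(4^{n}n^{q-1})$; summing over $n\ge1$ reproduces the left-hand side, call it $S$. At a negative integer the simple zero of $1/\Gamma(s+1)$ cancels the simple pole of the cotangent, and at a negative half-integer the simple zero of $\cot(\pi s)$ cancels the simple pole of $\Gamma(2s+1)$, so $r(s)\xi(s)$ is regular at all of these points and they contribute nothing. Hence the only remaining pole is at $s=0$, and Lemma~\ref{lem-residue} gives $S=-\Res_{s=0}\,r(s)\xi(s)$.

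The heart of the argument is the evaluation of this last residue, and I expect it to be the main step. Writing $g(s)=s\,\pi\cot(\pi s)\,\Gamma(2s+1)/(4^{s}\Gamma(s+1)^{2})$, which is analytic at $0$ with $g(0)=1$, one has $r(s)\xi(s)=s^{-q}g(s)$, so $\Res_{s=0}\,r(s)\xi(s)$ is the coefficient of $s^{q-1}$ in $g(s)$. The key identity is that $g$ collapses to a symmetric gamma quotient: using $s\pi\cot(\pi s)=\cos(\pi s)\,\Gamma(s+1)\Gamma(1-s)$ together with the reflection formulas $\Gamma(1+s)\Gamma(1-s)=\pi s/\sin(\pi s)$ and $\Gamma(1+2s)\Gamma(1-2s)=2\pi s/\sin(2\pi s)$ and the duplication $\sin(2\pi s)=2\sin(\pi s)\cos(\pi s)$, every trigonometric factor cancels and one finds
\[
g(s)=\frac{\Gamma(1-s)^{2}\,4^{-s}}{\Gamma(1-2s)}=h(-s),\qquad h(s):=\frac{\Gamma(s+1)^{2}\,4^{s}}{\Gamma(2s+1)} .
\]
Thus the coefficient of $s^{q-1}$ in $g$ is $(-1)^{q-1}$ times that in $h$, whence $S=(-1)^{q}\,[\text{coeff.\ of }s^{q-1}\text{ in }h(s)]$. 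The point that makes the final answer compact is precisely that the even zeta values a priori introduced by $\pi\cot(\pi s)$ dissolve into this symmetric quotient; a direct expansion of $g$ would instead carry explicit $\ze(2k)$'s and a $C\,D\,D$ structure, and matching that termwise to the stated $C\,C\,D$ sum would be far messier.

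Finally I would extract the coefficient using Lemma~\ref{Lem.CD}. Writing $h(s)=\bigl(\Gamma(s+1)\uue^{\gamma s}\bigr)^{2}\bigl(\Gamma(2s+1)\uue^{2\gamma s}\bigr)^{-1}4^{s}$, the Euler factors $\uue^{2\gamma s}$ cancel, and $h$ becomes the product of the four series $\sum_{k}C_{k}s^{k}/k!$ (twice), $\sum_{k}D_{k}(2s)^{k}/k!$, and $4^{s}=\sum_{k}(2\log2)^{k}s^{k}/k!$. Taking the Cauchy product, the coefficient of $s^{q-1}$ is exactly
\[
\sum_{k_{1}+k_{2}+k_{3}+k_{4}=q-1}\frac{C_{k_{1}}C_{k_{2}}D_{k_{3}}\,2^{k_{3}+k_{4}}\log^{k_{4}}(2)}{k_{1}!\,k_{2}!\,k_{3}!\,k_{4}!}=\sum G(k_{1},k_{2},k_{3},k_{4}),
\]
which together with the sign $(-1)^{q}$ gives the claimed identity. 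The only routine points remaining are the circle estimate (immediate from \eqref{eq-inf-Gamma}) and the bookkeeping of this Cauchy product.
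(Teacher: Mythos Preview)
Your argument is correct but takes a genuinely different route from the paper. The paper applies the residue lemma directly to
\[
F_0(s)=\frac{\Gamma(s+1)^2}{\Gamma(2s+1)}\,\frac{4^s}{s^q}=\frac{h(s)}{s^q},
\]
whose only poles lie at the nonpositive integers: the residue at $s=0$ is immediately the coefficient of $s^{q-1}$ in $h(s)$ (giving the $G$-sum via Lemma~\ref{Lem.CD}), while at $s=-n$ the pole is simple and its residue, read off from the leading terms of Lemma~\ref{Lem.AB}, produces $(-1)^{q+1}\binom{2n}{n}/(4^n n^{q-1})$. No cotangent kernel or reflection identity is used.

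Your choice of kernel inserts $\pi\cot(\pi s)$ and inverts the gamma factor, so the binomial series now arises trivially from the \emph{positive} integers, and all work is pushed to $s=0$, where you need the identity $g(s)=h(-s)$. This identity is exactly the statement that your integrand equals $(-1)^q F_0(-s)$, so the two proofs are literally related by the substitution $s\mapsto -s$. What your route buys is that you never touch Lemma~\ref{Lem.AB}: only the Maclaurin expansions of Lemma~\ref{Lem.CD} are required, and the negative-integer/half-integer cancellations are a one-line pole count. The price is the extra cotangent bookkeeping and the gamma-reflection computation that collapses $g$ to $h(-s)$. The paper's route is shorter to write down (no cotangent, the $s=0$ residue is automatic), but it leans on the expansion of $\Gamma$ at negative integers, even if only the leading term is needed here.
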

\begin{proof}
Consider the function
\[
F_0(s):=\frac{\Gamma^2(s+1)}{\Gamma(2s+1)}\frac{4^s}{s^q}\,.
\]
Applying \eqref{eq-inf-Gamma} yields
\[
F_0(s)=\frac{\sqrt{\pi}}{s^{q-1/2}}\smbb{1+O\smbb{\frac{1}{s}}}\,,\quad |s|\to\infty\,,
\]
which implies that the integral $\oint_{(\infty)}F_0(s)\uud s=0$, where $\oint_{(\infty)}$ denotes integration along large circles, that is, the limit of integrals $\oint_{|s|=\rho_k}$. Hence, we can use the Lemma \ref{lem-residue} to obtain an identity of infinite series.
On the other hand, the function $F_0(s)$ has poles only at all non-positive integers. For a negative integer $-n$, by Lemma \ref{Lem.AB}, if $s\to -n$, we have
\[
F_0(s)=-\frac{4^s}{s^q}\frac{(2n-1)!}{(n-1)!^2}
    \sum_{k_1,k_2,k_3\geq0}2^{k_3+1}A_{k_1}(n-1)A_{k_2}(n-1)B_{k_3}(2n-1)
    (s+n)^{k_1+k_2+k_3-1}\,.
\]
Hence, the residue is
\[
{\rm Res}(F_0(s),-n)=(-1)^{q+1}\frac{\binom{2n}{n}}{4^nn^{q-1}}\,,
    \quad\text{for }n=1,2,\ldots\,.
\]
Similarly, if $s\to 0$, we have
\[
F_0(s)=\frac{4^s}{s^q}\sum_{k_1,k_2,k_3\geq0}
    \frac{2^{k_3}C_{k_1}C_{k_2}D_{k_3}}{k_1!k_2!k_3!}s^{k_1+k_2+k_3}\,,
\]
and the residue of the pole of order $q$ at 0 is
\[
{\rm Res}(F_0(s),0)=\sum_{|k|_4=q-1}G(k_1,k_2,k_3,k_4)\,.
\]
Using Lemma \ref{lem-residue} and summing these two contributions gives the statement of the theorem.
\end{proof}

\begin{thm}\label{ACB1} For positive integer $q>1$,
\begin{align}
\sum_{n=1}^\infty \frac{n}{(n-1/2)^q}\cdot \frac{\binom{2n}{n}}{4^n}=\frac{2}{\pi} \sum_{k_1+\cdots+k_4+2k_5=q-2,\atop k_1,\ldots,k_5\geq 0} G(k_1,k_2,k_3,k_4)\widetilde{t}(2k_5+2),
\end{align}
where ${\widetilde{t}(k)}:=2^k\sl \frac 1{(2n-1)^k}=(2^k-1)\ze(k)$ for $k>1$.
\end{thm}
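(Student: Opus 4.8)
The plan is to run the Flajolet--Salvy machinery of Lemma~\ref{lem-residue} exactly as in the proof of Theorem~\ref{Th.Sq}, but with the base function $s^{-q}$ replaced by one whose pole sits at a half-integer. Concretely, I would apply Lemma~\ref{lem-residue} to
\[
F(s):=\frac{\Gamma^2(s+1)}{\Gamma(2s+1)}\,\frac{4^s}{(s+\tfrac12)^q},
\]
taking the kernel $\xi(s)=\tfrac{\Gamma^2(s+1)}{\Gamma(2s+1)}4^s$ and the rational base function $r(s)=(s+\tfrac12)^{-q}$. By \eqref{eq-inf-Gamma} we have $\xi(s)=\sqrt{\pi s}\,(1+O(1/s))=o(s)$, so $\xi$ is a kernel, and $r(s)=O(s^{-2})$ whenever $q\ge2$, whence $\oint_{(\infty)}F(s)\,ds=0$. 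The poles of $\xi$ are simple and lie at the negative integers (there $\Gamma^2(s+1)$ has a double pole while $1/\Gamma(2s+1)$ has a simple zero); since $1/\Gamma(2s+1)$ also vanishes simply at $s=-\tfrac12$, the order-$q$ pole of $r$ there is reduced to a pole of $F$ of order $q-1$. Thus $O=\{-1,-2,\dots\}$ and $S=\{-\tfrac12\}$.

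At the negative integers the computation is the one already done in Theorem~\ref{Th.Sq}: the Laurent expansion of $\xi$ at $s=-n$ provided by Lemma~\ref{Lem.AB} has leading coefficient $-n\binom{2n}{n}/4^n$, and since $r$ is holomorphic there with $r(-n)=(-1)^q(n-\tfrac12)^{-q}$, one gets
\[
\Res\big(F,-n\big)=(-1)^{q+1}\,\frac{n}{(n-\tfrac12)^q}\,\frac{\binom{2n}{n}}{4^n}.
\]
Summing over $n\ge1$ reproduces $(-1)^{q+1}$ times the left-hand side of the asserted identity, so Lemma~\ref{lem-residue} reduces everything to the single residue at $s=-\tfrac12$.

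This residue is the heart of the matter. Writing $s=-\tfrac12+\epsilon$ and using $\Gamma(2\epsilon)=\Gamma(1+2\epsilon)/(2\epsilon)$, one finds that $\Res(F,-\tfrac12)$ is the coefficient of $\epsilon^{q-2}$ in $\Phi(\epsilon):=\Gamma^2(\tfrac12+\epsilon)4^\epsilon/\Gamma(1+2\epsilon)$. A one-line application of the Legendre duplication formula gives $\Phi(-\epsilon)=A(\epsilon)\,\widetilde T(\epsilon)$, where
\[
A(\epsilon):=\frac{\Gamma^2(1+\epsilon)}{\Gamma(1+2\epsilon)}4^\epsilon,\qquad
\widetilde T(\epsilon):=\frac{\Gamma^2(\tfrac12-\epsilon)\,\Gamma(1+2\epsilon)}{\Gamma^2(1+\epsilon)\,\Gamma(1-2\epsilon)}\,4^{-2\epsilon}.
\]
Here $A(\epsilon)$ is precisely the series whose $\epsilon^m$-coefficient is $\sum_{k_1+k_2+k_3+k_4=m}G(k_1,k_2,k_3,k_4)$, i.e. the very expansion behind the residue at $0$ in Theorem~\ref{Th.Sq}. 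One checks, again via duplication, that $\widetilde T(-\epsilon)=\widetilde T(\epsilon)$, so only even powers survive; identifying the coefficients with the digamma values $\psi^{(k-1)}(\tfrac12)=(-1)^k(k-1)!\,\widetilde t(k)$ and the constants $C_n,D_n$ of Lemma~\ref{Lem.CD} yields $\widetilde T(\epsilon)=\tfrac2\pi\sum_{k\ge0}\widetilde t(2k+2)\,\epsilon^{2k}$.

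Since $(-1)^{q+1}(\text{LHS})+\Res(F,-\tfrac12)=0$ forces $(\text{LHS})=(-1)^q[\Phi]_{q-2}=[\Phi(-\epsilon)]_{q-2}$, reading off the coefficient of $\epsilon^{q-2}$ in $A(\epsilon)\widetilde T(\epsilon)$ produces exactly $\tfrac2\pi\sum_{k_1+\dots+k_4+2k_5=q-2}G(k_1,k_2,k_3,k_4)\,\widetilde t(2k_5+2)$, which is the claim. The main obstacle is this last residue: one must show that \emph{only} the even values $\widetilde t(2k_5+2)$ appear and that the remaining gamma factors collapse precisely onto the weights $G(k_1,k_2,k_3,k_4)$ of Theorem~\ref{Th.Sq}, together with the clean prefactor $2/\pi$. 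Both points hinge on combining the reflection and duplication formulas so as to trade the half-integer shift for the factor $4^{\mp2\epsilon}$ and enforce the evenness of $\widetilde T$; once that identity is in place the remaining bookkeeping is routine, as every intermediate quantity is made explicit by Lemmas~\ref{Lem.CD} and \ref{Lem.AB}.
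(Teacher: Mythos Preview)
Your argument is correct and takes a genuinely different route from the paper. The paper does \emph{not} shift the base pole to $-\tfrac12$; instead it keeps the same base $s^{-q}$ and inserts an extra kernel factor, working with
\[
F_1(s)=\pi\tan(\pi s)\,\frac{\Gamma^2(s+1)}{\Gamma(2s+1)}\,\frac{4^s}{s^q}.
\]
The zeros of $\tan(\pi s)$ at the integers kill the poles of $\xi$ at $-n$, while its simple poles at the positive half-integers $s=n-\tfrac12$ produce the left-hand side directly; the only remaining pole sits at $s=0$, where the well-known expansion $\pi\tan(\pi s)=2\sum_{k\ge0}\widetilde t(2k+2)s^{2k+1}$ combines with the $G$-expansion of $\xi(s)$ (already computed in Theorem~\ref{Th.Sq}) to give the right-hand side in one stroke. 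In your approach the roles are reversed: the poles of $\xi$ at $-n$ generate the series, and the $\widetilde t$'s emerge from expanding around $s=-\tfrac12$. In fact your $\widetilde T(\epsilon)$ simplifies, via the duplication formula you invoke, to exactly $\tan(\pi\epsilon)/\epsilon$, so both proofs are secretly using the same tangent expansion---explicitly in the paper, hidden inside gamma ratios in yours. The paper's version is shorter because the $\widetilde t$-structure is visible from the start and no sign bookkeeping between $\Phi(\epsilon)$ and $\Phi(-\epsilon)$ is needed; your version has the virtue of staying within the exact template of Theorem~\ref{Th.Sq} (same kernel $\xi$, only the rational base changes) and makes clear why the factor $2/\pi$ appears as $\widetilde T(0)^{-1}\cdot\widetilde t(2)\cdot 2/\pi$ rather than from an inserted $\pi$.
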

\begin{proof} Similarly to Theorem \ref{Th.Sq}, we consider the contour integral
\begin{align*}
\oint_{(\infty)}F_1(s)ds:=\oint_{(\infty)} \pi \tan(\pi s) F_0(s)ds=0.
\end{align*}
The function $F_1(s)$ has poles only at $0$ and $n-1/2\ (n=1,2,3,\ldots)$. By straightforward calculations, we obtain
\begin{align*}
{\rm Res}(F_1(s),n-1/2)=-\pi \frac{n}{(n-1/2)^q}\frac{\binom{2n}{n}}{4^n}
\end{align*}
and
\begin{align*}
{\rm Res}(F_1(s),0)=2\sum_{k_1+\cdots+k_4+2k_5=q-2,\atop k_1,\ldots,k_5\geq 0} \frac{C_{k_1}C_{k_2}D_{k_3}2^{k_3+k_4}(\log(2))^{k_4}{\widetilde{t}(2k_5+2)}}{k_1!k_2!k_3!k_4!}.
\end{align*}
Hence, we complete the proof of Theorem \ref{ACB1}.
\end{proof}

It is possible that some other identities of Ap\'ery-like sums can be proved using
techniques of the present section. For example, considering the function
$F_2(s):=(\psi(-s)+\gamma)F_0(s)$, which has poles only at all integers. By residue computations, we obtain
\begin{align}
\sum_{n=1}^\infty\frac{4^n}{n^q\binom{2n}{n}}
    =(-1)^q\sum_{n=1}^\infty \frac{H_{n-1}\binom{2n}{n}}{4^nn^{q-1}}+\sum_{k_1+k_2+\cdots+k_5=q-2,\atop k_1,\ldots,k_5\geq 0}G(k_1,k_2,k_3,k_4)\ze(k_5+2)\,.
\end{align}

\medskip

\noindent{\bf Acknowledgement.} This work is supported by the National Natural Science Foundation of China [Grant No. 12101008], the Natural Science Foundation of Anhui Province [Grant No. 2108085QA01] and the University Natural Science Research Project of Anhui Province [Grant No. KJ2020A0057]. The second author is supported by the Jacobs Prize from The Bishop's School.

\medskip

\noindent{\bf Declaration of interest.} There are no competing interests to declare.


\begin{thebibliography}{99}


\bibitem{Ablinger2015}
J.\ Ablinger, Discovering and proving infinite binomial sums identities, \emph{Exp. Math.} \textbf{26} (2017), pp.\ 62--71.

\bibitem{Akhilesh1}
P.\ Akhilesh, Double tails of multiple zeta values, \emph{J. Number Thy.} \textbf{170} (2017), pp.\ 228--249.

\bibitem{Akhilesh}
P.\ Akhilesh, Multiple zeta values and multiple Ap\'ery-like sums, \emph{J. Number Thy.} \textbf{226} (2021), pp.\ 72--138.

\bibitem{AkiyamaEgTa2001}
S.\ Akiyama, S.\ Egami and Y.\ Tanigawa,
Analytic continuation of multiple zeta-functions and their values at non-positive integers,
\emph{Acta Arith.} \textbf{98} (2001), pp.\ 107--116.

\bibitem{AlmkvistGr1999}
G. Almkvist and A. Granville, Borwein and Bradley's Ap\'ery-Like Formulae for $\zeta(4n+3)$, \emph{Exp. Math.}
\textbf{8} (1999), pp.\ 197--203.

\bibitem{Apery1978}
R.\ Ap\'ery, Irrationalit\'e de $\zeta(2)$ et $\zeta(3)$ (in French),
\emph{Ast\'erisque} \textbf{61} (1979), pp.\ 11--13.

\bibitem{Au2020}
K.C. Au, Evaluation of one-dimensional polylogarithmic integral, with applications to infinite series, arXiv:2007.03957. A companion athematica package available at researchgate.net/publication/357601353.

\bibitem{Au2022}
K.C. Au, Iterated integrals and special values of multiple polylogarithm at algebraic arguments, arXiv:2201.01676.

\bibitem{BorweinBaG2004}
J.\ Borwein, D.\ Bailey and R. Girgensohn, Experimentation in mathematics, A K Peters, Ltd, Natick, MA 2004.

\bibitem{BorweinBr1997}
J.\ Borwein and D.\ Bradley, Empirically determined Ap\'ery-like formulae, \emph{Exp. Math.} \textbf{6} (1997), pp.\ 181--194.

\bibitem{BowmanBr2002}
D.\ Bowman and D.\ Bradley,
The algebra and combinatorics of shuffles and multiple zeta values,
\emph{J.\ Combin.\ Theory Ser.\ A} \textbf{97}(1) (2002), pp.\ 43--63

\bibitem{BorweinBBL1998}
J.M.\ Borwein, D.M.\ Bradley, D.J.\ Broadhurst and P.\ Lison\v{e}k,
Combinatorial aspects of multiple zeta values,
\emph{Electron.\ J.\ Combin.} \textbf{5} (1998), R38.

\bibitem{BorweinBrKa2001}
J.M.\ Borwein, D.J.\ Broadhurst and J.\ Kamnitzer,
Central binomial sums, multiple Clausen values, and zeta values,
\emph{Exp.\ Math.} \textbf{10}(1) (2001), pp.\ 25--34.

\bibitem{Brown2012}
F.\ Brown, Mixed Tate motives over $\Z$,
\emph{Ann.\ Math.} \textbf{175}(2) (2012), pp.\ 949--976.

\bibitem{Brown2009a}
F.\ Brown, Multiple zeta values and and periods of moduli spaces $M_{0,n}$,
\emph{Ann.\ Sci.\ Ecole Norm.\ S.} \textbf{42}(3) (2009), pp.\ 371--489.

\bibitem{Brown2009c}
F.\ Brown, The massless higher-loop two-point function,
\emph{Commun.\ Math.\ Phys.} \textbf{287} (2009), pp.\ 925-958.

\bibitem{Brunault2021}
F.\ Brunault, On the $K_4$ group of modular curves, arXiv:2009.07614.

\bibitem{CampbellCA2022}
J.M.\ Campbell, M.\ Cantarini and J.\ D'Aurizio,
Symbolic computations via Fourier--Legendre expansions and fractional operators,
\emph{Integral Transforms Special Func.} \textbf{33}(2) (2022), pp.\ 1--19.

\bibitem{CantariniD2019}
M.\ Cantarini and J.\ D'Aurizio,
On the interplay between hypergeometric series, Fourier--Legendre expansions and Euler sums
\emph{Boll.\ Unione Mat.\ Ital.}, \textbf{12} (2019), pp.\ 623--656.

\bibitem{CampbellDS2019}
J.M.\ Campbell, J.\ D'Aurizio and J.\ Sondow,
On the interplay among hypergeometric functions, complete elliptic integrals, and Fourier--Legendre expansions,
\emph{J. Math. Anal. Appl.} \textbf{479} (2019), pp.\ 90--121.

\bibitem{CampbellGZ2024}
J.M.\ Campbell, M.L.\ Glasser and Y. Zhou,
New evaluations of inverse binomial series via cyclotomic multiple zeta values, \emph{SIGMA} \textbf{20} (2024), 079, 14 pages. arXiv:2403.16945.

\bibitem{Cartier1972}
P. Cartier, On the structure of free Baxter algebras,
\emph{Adv. Math.} \textbf{9} (1972), pp.\ 253--265.

\bibitem{HJChen2013}
H.-J.\ Chen, On shuffle of double zeta values over ${\mathbb F}_q[t]$,
\emph{J.\ Number Theory} \textbf{148} (3) (2015), pp.\ 153--163.

\bibitem{KTChen1954}
K.-T.\ Chen, Iterated integrals and exponential homomorphisms,
\emph{Proc.\ London Math.\ Soc.} \textbf{4}(3) (1954), pp.\ 502--512.

\bibitem{KTChen1971}
K.-T.\ Chen, Algebras of iterated path integrals and fundamental groups,
\emph{Trans.\ Amer.\ Math.\ Soc.} \textbf{156} (1971), pp.\ 359--379.

\bibitem{Chu2021a}
W.\ Chu, Further Ap\'ery-like series for Riemann zeta function,
\emph{Math. Notes} \textbf{109}(1) (2021), pp.\ 136--146.

\bibitem{Chu2021b}
W.\ Chu, lternating series of Ap\'ery-type for the Riemann zeta function,
\emph{Contrib. Disc. Math.} \textbf{15}(3) (2021), pp.\ 108--116.

\bibitem{Com74}
L.\ Comtet,
Advanced Combinatorics,
D. Reidel Publishing Co., Dordrecht, 1974.

\bibitem{CoppoCa2015}
M.-A.\ Coppo and B.\ Candelpergher, Inverse binomial series and values of Arakawa-Kaneko zeta functions, \emph{J. Number
Theory} \textbf{150} (2015), pp.\ 98--119.

\bibitem{DavydychevDe2001}
A.I.\ Davydychev and M.\ Yu.\ Kalmykov, New results for the epsilon-expansion of
certain one-, two- and three-loop Feynman diagrams,
\emph{Nucl. Phys. B} \textbf{605} (2001), pp.\ 266--318. arXiv:hep-th/0012189.

\bibitem{DavydychevDe2004}
A.I.\ Davydychev and M.\ Yu.\ Kalmykov, Massive Feynman diagrams and inverse binomial sums, \emph{Nuclear Phys.\ B}
\textbf{699} (2004), pp.\ 3--64. arXiv:hep-th/0303162v4.

\bibitem{Euler1735Basel}
L.\ Euler, De summis serierum reciprocarum,
\emph{Commentarii academiae scientiarum Petropolitanae}\ \textbf{7} (1735), pp.\ 123-134;
reprinted in Opera Omnia, Ser.\ I, Vol.\ \textbf{14},
B.\ Teubner (ed.), Berlin, 1927, pp.\ 73-86.

\bibitem{FTW2016}
H.\ Frellesvig, D. Tommasini and C. Wever, On the reduction of generalized polylogarithms to $\Li_n$ and $\Li_{2,2}$ and on the evaluation thereof, \emph{J. High Energy Phys.} \textbf{3} 2016, 189. arXiv:1601.02649

\bibitem{FS1998}
P. Flajolet and B. Salvy, Euler sums and contour integral representations, \emph{Exp. Math.}, \textbf{7}(1)(1998), pp.\ 15--35.

\bibitem{Furusho2015}
H.\ Furusho, On relations among multiple zeta values obtained in knot theory.
To appear in the Adv. Lectures in Math. book series, Internat. Press and Higher Edu. Press of China. arXiv:1501.06638.

\bibitem{Hoffman1992}
M.E.\ Hoffman, Multiple harmonic series,
\emph{Pacific J.\ Math.} \textbf{152} (1992), pp.\ 275--290.

\bibitem{Hoffman1997}
M.E.\ Hoffman, The algebra of multiple harmonic series,
\emph{J.\ Alg.} \textbf{194} (1997), pp.\ 477--495.

\bibitem{Hoffman2019}
M.E. Hoffman, An odd variant of multiple zeta values,
\emph{Commun. Number Theory Phys.} \textbf{13} (2019), pp.\ 529--567.

\bibitem{IKZ2006}
K.\ Ihara, M.\ Kaneko and D.\ Zagier, Derivation and double shuffle
relations for multiple zeta values,
\emph{Compos.\ Math.} \textbf{142} (2006), pp.\ 307--338

\bibitem{JegerlehnerKV2003}
F. Jegerlehner, M.Yu. Kalmykov and O. Veretin, $\overline{\rm{MS}}$ versus pole masses of gauge bosons II: two-loop electroweak Fermion corrections,
\emph{Nucl. Phys.} \textbf{B658} (2003), pp.\ 49--112.

\bibitem{Luke69.1}
Y.L.\ Luke,
The Special Functions and Their Approximations, Vol. I.
Mathematics in Science and Engineering, Vol. 53,
Academic Press, New York-London, 1969.

\bibitem{KanekoTs2019}
M. Kaneko and H. Tsumura, On multiple zeta vaues of level two, \emph{Tsukuba J. Math.}, \textbf{44} (2020), 213--234. arXiv: 1903.03747v2.

\bibitem{Koecher1980}
M. Koecher, Letter to the editor, \emph{Math. Intelligencer} \textbf{2} (1980), pp.\ 62--64.

\bibitem{Leshchiner}
D.\ Leshchiner, Some new identities for $\zeta(k)$, \emph{J. Number Theory}, \textbf{13} (1981), pp.\ 355--362. MR0634205
(83k:10072.

\bibitem{Lewin1981}
L.\ Lewin, \emph{Polylogarithms and Associated Functions}, Elsevier Sci.\ Publishers,
New York, New York, 1981.

\bibitem{Mason1983}
R. C. Mason, The hyperelliptic equation over function fields. \emph{Math. Proc. Cambridge Philos. Soc.}
\textbf{93}(2) (1983), pp.\ 219--230.

\bibitem{Panzer2017}
E.\ Panzer, The parity theorem for multiple polylogarithms,
\emph{J.\ Number Theory} \textbf{172} (2017), pp.\ 93--113.

\bibitem{Panzer2014c}
E.\ Panzer, Feynman integrals and hyperlogarithms, PhD thesis,
Humboldt-Universit\"at zu Berlin, submitted 26th of August 2014.
Updated version at arXiv:1506.07243

\bibitem{Panzer2015}
E.\ Panzer, Algorithms for the symbolic integration of hyperlogarithms with
applications to Feynman integrals, \emph{Comput. Phys. Commun.} \textbf{188} 2015, pp.\ 148--166.

\bibitem{Riordan58}
J. Riordan, An Introduction to Combinatorial Analysis,
Reprint of the 1958 original, Dover Publications, Inc., Mineola, NY, 2002.

\bibitem{Sun2009}
Z.-W.\ Sun, Curious congruences for Fibonacci numbers, arXiv:0912.2671.

\bibitem{Sun2015}
Z.-W.\ Sun, New series for some special values of $L$-functions, \emph{Nanjing Univ. J. Math. Biquarterly} \textbf{32} (2015), no.2, 189-218.

\bibitem{Sun2021}
Z.-W.\ Sun, \emph{New Conjectures in Number Theory and Combinatorics}, Harbin Institute of Technology Press, (in Chinese) 2021.

\bibitem{Sun2022}
Z.-W. Sun, Evaluations of two new series involving Lucas $v$-sequences, Question 420049 at MathOverflow, April 9, 2022. https://mathoverflow.net/questions/420049.

\bibitem{SunZhou2024sum3k4k}
Z.-W.\ Sun and Y.\ Zhou, Evaluations of {$\sum_{k=1}^\infty
  \frac{x^k}{k^2\binom{3k}{k}}$} and related series,  arXiv:2401.12083.

\bibitem{Sprugnoli2006}
R.\ Sprugnoli, Sums of reciprocals of the central binomial coefficients, \emph{Integers} \textbf{6} (2006), A27, 18 pp.

\bibitem{WangXu2021}
W.\ Wang and C.\ Xu, Alternating multiple zeta values, and explicit formulas of some Euler-Ap\'{e}ry-type
series, \emph{Eur. J. Combin.} {\bf 93}(2021), 103283. Its associated MAPLE package can be found at www.researchgate.net/publication/335339129.


\bibitem{XW2021}
C. Xu and W. Wang, Explicit formulas of Euler sums via multiple zeta values, \emph{J. Symb. Comput.}, \textbf{101}(2020), 109-127.

\bibitem{XuZhao2020a}
C.\ Xu and J.\ Zhao, Variants of multiple zeta values with even and odd summation indices,
\emph{Math. Zeit.} \textbf{300} (2022), pp.\ 3109--3142.

\bibitem{XuZhao2022}
C.\ Xu and J.\ Zhao, Ap\'{e}ry-type series with summation indices of mixed parities and colored multiple zeta values, I, II, III. arXiv:2202.06195,  2203.00777, 2205.01000.

\bibitem{XuZhao2022d}
C.\ Xu and J.\ Zhao, Explicit evaluation of Euler-Ap\'ery type multiple zeta star values and multiple $t$-star values.
arXiv:2205.04242.

\bibitem{Zagier1994}
D.\ Zagier, Values of zeta functions and their applications,
in: First European Congress of Mathematics (Paris, 1992), Vol.\ \textbf{II},
A.\ Joseph \emph{et al.} (eds.), Birkh\"auser, Basel, 1994, pp.\ 497--512.

\bibitem{Zagier2012}
D.\ Zagier, Evaluation of the multiple zeta values $\zeta(2,\dots , 2, 3, 2,\dots , 2)$,
\emph{Ann.\ Math.} \textbf{175} (2012), pp.\ 977--1000.

\bibitem{Zhao1999}
J.\ Zhao, Analytic continuation of multiple zeta functions,
\emph{Proc.\ Amer.\ Math.\ Soc.}\ \textbf{128} (1999), pp.\ 1275--1283.

\bibitem{Zhao-4adm}
J.\ Zhao, On $N$-unital functions, arXiv:2103.15745

\bibitem{Zhao2016}
J. Zhao, \emph{Multiple zeta functions, multiple polylogarithms and their special values}, Series on Number
Theory and its Applications, Vol.~12, World Scientific Publishing Co. Pte. Ltd., Hackensack, NJ, 2016.

\bibitem{Zhou2023}
Y. Zhou, Sun's series via cyclotomic multiple zeta values, \emph{SIGMA} \textbf{19} (2023), 074, 20 pages. arXiv:2306.04638.

\end{thebibliography}
\end{document}